\newcommand{\stkout}[1]{\ifmmode\text{\sout{\ensuremath{#1}}}\else\sout{#1}\fi}
\newtheorem{theorem}{Theorem}[section]
\newtheorem{remark}[theorem]{Remark}
\newtheorem{assumption}[theorem]{Assumption}
\newtheorem{lemma}[theorem]{Lemma}
\newtheorem{proposition}[theorem]{Proposition}
\newtheorem{corollary}[theorem]{Corollary}
\newtheorem{df}[theorem]{Definition}
\newtheorem{problem}[theorem]{Problem}
\def \E{\mathsf{E}}
\def \P{\mathsf{P}}
\def \R{\mathbb{R}}
\def \F{\mathbb{F}}
\def \Q{\mathsf{Q}}
\def\d{\mathrm{d}}
\definecolor{red}{rgb}{1.0,0.0,0.0}
\definecolor{blu}{rgb}{0.0,0.0,1.0}
\definecolor{gre}{rgb}{0.03,0.50,0.03}
\title[Singular Control with Interconnected Dynamics]{A Singular Stochastic Control Problem with Interconnected Dynamics}
\author[Federico]{Salvatore Federico}
\author[Ferrari]{Giorgio Ferrari}
\author[Schuhmann]{Patrick Schuhmann}
\address{S.~Federico: Dipartimento di Economia Politica e Statistica, Universit\`a di Siena, Piazza san Francesco 7/8, 53100, Siena Italy}
\email{\href{mailto:salvatore.federico@unisi.it}{salvatore.federico@unisi.it}}
\address{G.~Ferrari: Center for Mathematical Economics (IMW), Bielefeld University, Universit\"atsstrasse 25, 33615, Bielefeld, Germany}
\email{\href{mailto:giorgio.ferrari@uni-bielefeld.de}{giorgio.ferrari@uni-bielefeld.de}}
\address{P.~Schuhmann: Center for Mathematical Economics (IMW), Bielefeld University, Universit\"atsstrasse 25, 33615, Bielefeld, Germany}
\email{\href{mailto:patrick.schuhmann@uni-bielefeld.de}{patrick.schuhmann@uni-bielefeld.de}}
\date{\today}
\numberwithin{equation}{section}
\begin{document}

\begin{abstract} 
In this paper we study a Markovian two-dimensional bounded-variation stochastic control problem whose state process consists of a diffusive mean-reverting component and of a purely controlled one. The main problem's characteristic lies in the interaction of the two components of the state process: the mean-reversion level of the diffusive component is an affine function of the current value of the purely controlled one. By relying on a combination of techniques from viscosity theory and free-boundary analysis, we provide the structure of the value function and we show that it satisfies a second-order smooth-fit principle. Such a regularity is then exploited in order to determine a system of functional equations solved by the two monotone continuous curves (free boundaries) that split the control problem's state space in three connected regions. Further properties of the free boundaries are also obtained.
 \end{abstract}

\maketitle

\smallskip

{\textbf{Keywords}}: singular stochastic control; Dynkin game; viscosity solution; free boundary; smooth-fit; inflation management.

\smallskip

{\textbf{MSC2010 subject classification}}: 93E20, 91A55, 49L25, 49J40, 91B64.


\section{Introduction}
\label{introduction}


In this paper, we study a continuous-time stochastic control problem in which the mean-reversion level of a diffusive process $X$ is an affine function of the current level of a purely controlled one, denoted by $R$. The level of the latter can be unlimitedly increased and decreased at proportional costs. A running penalty is also faced over time, and the aim is to minimize a total expected discounted cost functional. We model such an optimization problem as a Markovian degenerate, two-dimensional \emph{singular stochastic control problem with controls of bounded variation} over an infinite time-horizon (see, e.g., \cite{CMR}, \cite{K83}, \cite{Taksar85} as early contributions on singular stochastic control problems). It is Markovian and two-dimensional since the state-variable is a two-dimensional Markov process; it is degenerate since the dynamics of the controlled process does not have any diffusive term; finally, it is a bounded-variation stochastic control problem since we interpret the cumulative amounts of increase/decrease of the level of the purely controlled process as the control variables.

The coupling between the two components of the state process makes the problem of this paper quite intricate. Our analysis is mainly devoted to the value function and the geometry of the problem's state space, being the main contribution of our work the determination of the structure of the control problem's value function $V$ and the study of its regularity. More in detail: (i) we show that the state space is split into three connected regions by two monotone curves (free boundaries); (ii) we provide the expression of the value function in each of those regions; (iii) we prove that $V$ is continuously differentiable, and admits second order mixed derivative which is continuous in the whole space (second-order smooth-fit). This latter regularity allows us to obtain a system of functional equations that are necessarily solved by the free boundaries. Further properties of the latter, such as their continuity and asymptotic limits, are also determined.
To the best of our knowledge, this is the first paper where a detailed analysis of the structure of the value function and of the geometry of the state space is provided for a two-dimensional bounded-variation stochastic control problem with interconnected dynamics.

In order to perform our analysis we do not rely on the so-called ``guess-and-verify'' approach, usually employed in the study of two-dimensional degenerate singular stochastic control problems (see, e.g., \cite{AlMotairiZ}, \cite{DeAFeMo15}, \cite{DeAFeMo19}, \cite{LokkaZervos}, \cite{LonZervos}, and \cite{MehriZervos}). In the previous works, the geometry of the state space is guessed and suitable smoothness is imposed on a candidate value function. Substantial technical effort is then required when verifying all the properties that such constructed candidate solution has to satisfy in order to provide the actual problem's solution (see, e.g., \cite{MehriZervos}). This verification step is actually even harder in our problem, given the dependency of the diffusive dynamics on the current value of the purely controlled one. For this reason we follow here a direct study of the control problem's value function and state space. First of all, by exploiting the convexity of the value function, we show that $V \in W^{2,\infty}_{\text{loc}}(\R^2;\R)$; i.e., by Sobolev's embedding, it is continuously differentiable and admits second order (weak) derivatives that are locally bounded on $\R^2$. Then - denoting by $x$ the current value of the diffusive component and by $r$ that of the controlled one - through a suitable (and not immediate) approximation procedure needed to accommodate our degenerate setting, we can employ a result of \cite{ChiaHauss00} and show that the derivative $V_r$ is the value function of a related stopping game (Dynkin game). The main characteristic of such a game is that its functional involves the derivative $V_x$ of the control problem's value function in the form of a running cost; the presence of this term is due to the coupling between the two components of the control problem's state space (see also \cite{ChiaHauss00}). The fact that $V_r$ identifies with the value of a Dynkin game, together with the convexity of $V$, allows us to obtain preliminary information about the geometry of the state space of our problem. We show that there exist two monotone boundaries that delineate the regions where $V_r$ equates (up to a sign) the marginal cost of interventions $K$ (action regions). We then move on by studying the Hamilton-Jacobi-Bellman (HJB) equation associated to $V$. This takes the form of an ordinary differential equation with the gradient constraint $-K \leq V_r \leq K$ (variational inequality), and we prove that $V$ solves it in the viscosity sense. Such a result paves the way to the determination of the structure of the value function; indeed, $V$ is shown to be a classical solution to the HJB equation in the region between the two boundaries (inaction region), and therefore it is given there in terms of the linear combination of the two strictly increasing and decreasing eigenfunctions of the infinitesimal generator of the Ornstein-Uhlenbeck process. The structure of $V$ in the two action regions is then obtained by exploiting the continuity of $V$ and the gradient constraint.

The regularity of $V$ is further improved by proving that the second-order mixed derivative, $V_{xr}$, is continuous (second-order smooth fit). This proof exploits the fact that $V$ is a viscosity solution to the HJB, as well as the preliminary properties of the free boundaries, and can be obtained along the lines of the proof of Proposition 5.3 in \cite{Federico2014} (suitably adjusted to our setting). The structure of $V$ and the second-order smooth fit property have a number of notable implications. They allow to provide the asymptotic behavior of the free boundaries and, in the relevant case of a separable running cost function, to obtain their strict monotonicity, and therefore the continuity of their inverses $g_1$ and $g_2$. These latter curves are then shown to necessarily satisfy a nonlinear system of functional equations which, in the case of decoupled dynamics, coincides with that of Proposition 5.5 in \cite{Federico2014}. However, in contrast to the lengthy analytical approach followed in \cite{Federico2014}, our way of obtaining the equations for $g_1$ and $g_2$ is fully probabilistic as it employs the local-time-space calculus of \cite{Peskir2003} and properties of one-dimensional regular diffusions (see \cite{BorodinSalminen}). Unfortunately, the highly complex structure of the equations for $g_1$ and $g_2$ makes a statement about the uniqueness of their solution far from being trivial, and we leave the study of this relevant issue for future research. 

In a final section of this paper, we show that an optimal control is given in terms of the solution (if it exists) to a suitable Skorokhod reflection problem at the boundary of the inaction region. Existence of multi-dimensional reflected diffusions is per se an interesting and not trivial question, that is linked to the regularity of the reflection boundary and direction of reflection. Under additional requirements on the running cost function $f$, we are able to find bounds on the free boundaries, and then to construct a (weak) solution to the reflection problem by following the approach of Section 5 in \cite{ChiaHauss00}. More in general, we discuss conditions on the free boundaries ensuring the existence of a two-dimensional process $(X^{\star},R^{\star})$ that is reflected at the boundary of the inaction region. In particular, global Lipschitz-regularity of the free boundaries would make the job.

The closest papers to ours are \cite{ChiaHauss00} and \cite{Federico2014}. In fact, from a mathematical point of view, our model can be seen in between that of \cite{ChiaHauss00} (see also \cite{ChiaHauss98} for a finite-horizon version) and that of \cite{Federico2014} (see also \cite{MehriZervos}). On the one hand, we propose a degenerate version of the fully two-dimensional bounded-variation stochastic control of \cite{ChiaHauss00}; on the other hand, the problem of \cite{Federico2014} can be obtained from ours when the dynamics of the two components of the state process decouple. It is exactly the degeneracy of our state process that makes the determination of the structure of the value function possible in our problem, and it is the coupling between $X$ and $R$ that makes our analysis much more involved than that in \cite{Federico2014}.
To the best of our knowledge, the only other paper dealing with a two-dimensional degenerate singular stochastic control problem where the dynamics of the two components of the state process are coupled is \cite{villeneuve}. There it is considered a dividend and investment problem for a cash constrained firm, and both a viscosity solution approach and a verification technique is employed to get qualitative properties of the value function. It is important to notice that, differently to ours, the problem in \cite{villeneuve} is not convex, thus making it hard to prove any regularity of the value function further than its continuity.

Our control problem might encompass different applications and a first one might be in the context of the central banks' optimal management of inflation. In this regards, the diffusive mean-reverting process $X$ is the level of the inflation rate, while the purely controlled process $R$ represents the key interest rate. The level of the latter can be adjusted through the central bank's monetary policy with the aim of keeping the inflation under control. Indeed, interest rates negatively affect the inflation rate: as interest rates are reduced, more people are able to borrow more money, consumers have more money to spend, and, as a consequence, economy grows and inflation raises; vice versa, if interest rates are increased, consumers are more inclined to save since the returns from savings are higher. The presence of proportional costs in our control problem might model central banks' reluctance to make large changes in the rate; on the other hand, the running cost might be, e.g., a penalization for current levels of inflation and interest rates that are misaligned with respect to fixed target levels (think of 2\% benchmark level of inflation rate planned by the European Central Bank or the U.S.\ Federal Reserve over the medium term). We refer to \cite{ChiaHauss98}, \cite{ChiaHauss00}, and \cite{JackZervos} (the latter being actually an ergodic impulse control problem) for other bounded-variation stochastic control problems motivated to the problem of inflation targeting, and to the review \cite{Woodford} and Chapter 11 of the book \cite{BlanchardF} for an economic discussion. Another problem that might be reasonably modeled in terms of the considered singular stochastic control problem comes from environmental economics. Here, $X$ represents a company's CO$_2$ emissions and $R$ is the number of production units that do not employ fossil fuel. Such a number can be adjusted by the company at proportional costs, and increasing the use of alternative fuels (i.e.\ increasing the level of $R$) negatively affects the natural equilibrium level of emissions. The aim is to minimize a total expected cost functional that also involves a running loss function penalizing any deviation of the current level of carbon emissions from a target value exogenously chosen by a regulator.

The rest of this paper is organized as follows. In Section \ref{sec:setting} we set up the problem and provide preliminary properties of the value function. The related Dynkin game is obtained in Section \ref{sec:DynkinGame}, where we also show preliminary properties of the free boundaries. Section \ref{sec:valuecharact} gives the structure of the control problem's value function, while the second-order smooth-fit property is proved in Section \ref{2ndorderSF}. Such a regularity is then used in Section \ref{sec:finalpropandeqs} for the proof of further properties of the free boundaries and the determination of the system of equations solved by the latter (cf.\ Subsection \ref{sec:eqbds}). Section \ref{sec:OC} discusses the structure of the optimal control. Finally, Appendix \ref{sec:GameCH} provides the proof of the main theorem of Section \ref{sec:DynkinGame}.


\subsection{Notation} 
\label{sec:notation}

In the rest of this paper, we adopt the following notation and functional spaces. We will use $|\,\cdot\,|$ for the Euclidean norm on any finite-dimensional space, without indicating the dimension each time for simplicity of exposition.

Given a smooth function $h:\R \to \R$, we shall write $h^{\prime}$, $h^{\prime\prime}$, etc.\ to denote its derivatives. If the function $h$ admits $k$ continuous derivatives, $k\geq1$, we shall write $h \in C^{k}(\R;\R)$, while $h\in C(\R;\R)$ if such a function is only continuous.

For a smooth function $h:\mathbb{R}^2\to \mathbb{R}$, we denote by $h_x$, $h_r$, $h_{xx}$, $h_{rr}$, etc.\ its partial derivatives. Given $k,j\in \mathbb{N}$, we let $C^{k,j}(\mathbb{R}^2;\R)$ be the class of functions $h:\mathbb{R}^2 \to \R$ which are $k$-times continuously differentiable with respect to the first variable and $j$-times continuously differentiable with respect to the second variable. If $k=j$, we shall simply write $C^{k}(\R^2;\R)$. Moreover, for an open domain $\mathcal{O} \subseteq \mathbb{R}^d$, $d\in \{1,2\}$, we shall work with the space $C^{k,\text{Lip}}_{\text{loc}}(\mathcal{O};\R)$, $k\geq1$, which consists of all the functions $h:\mathcal{O}\to \R$ that are $k$ times continuously differentiable, with locally-Lipschitz $k$th-derivative(s). 

Also, for $p \geq 1$ we shall denote by $L^{p}(\mathcal{O};\R)$ (resp.\ $L^{p}_{\text{loc}}(\mathcal{O};\R))$ the space of real-valued functions $h:\mathcal{O}\to \mathbb{R}$ such that $|h|^p$ is integrable with respect to the Lebesgue measure on $\mathcal{O}$ (resp.\ locally integrable on $\mathcal{O}$). Finally, for $k\geq1$, we shall make use of the space $W^{k,p}(\mathcal{O};\R)$ (resp.\ $W^{k,p}_{\text{loc}}(\mathcal{O};\R)$), which is the space of all the functions $h:\mathcal{O}\to \R$ that admit $k$th-order weak derivative(s) in $L^{p}(\mathcal{O};\R)$ (resp.\ $L^{p}_{\text{loc}}(\mathcal{O};\R))$).


\section{Problem Formulation and Preliminary Results}
\label{sec:setting}

\subsection{Problem formulation}
\label{sec:pb}

Let $(\Omega, \mathcal{F},\mathbb{F}:=(\mathcal{F}_t)_{t\geq0}, \P)$ be a complete filtered probability space rich enough to accommodate an $\F$-Brownian motion $W:=(W_t)_{t\geq0}$. We assume that the filtration $\F$ satisfies the usual conditions.

Introducing the (nonempty) set
\begin{align}
\label{setA}
 \mathcal{A}:=\{ &\xi:\Omega \times \mathbb{R}_+ \to \mathbb{R}:\,(\xi_t)_{t\geq0} \text{ is } \mathbb{F}\text{-adapted and such that } t \mapsto \xi_t \text{ is a.s.} \nonumber \\
 &\text{c\`{a}dl\`{a}g and (locally) of finite variation}\},
\end{align}
for any $\xi \in \mathcal{A}$ we denote by $\xi^+$ and $\xi^-$ the two nondecreasing $\mathbb{F}$-adapted c\`{a}dl\`{a}g processes providing the minimal decomposition of $\xi$; i.e.\ $\xi=\xi^+ - \xi^-$ and the (random) Borel-measures induced on $[0,\infty)$ by $\xi^+$ and $\xi^-$ have disjoint supports. In the following, for any $\xi \in \mathcal{A}$, we set $\xi^{\pm}_{0^-}=0$ a.s.\ and we denote by $|\xi|_t:=\xi^+_t + \xi^-_t$, $t\geq0$, its total variation.

Picking $\xi \in \mathcal{A}$, we then consider the purely controlled dynamics
\begin{equation}
\label{definition of r}
R_t^{r,\xi}=r + \xi^+_t - \xi^-_t, \quad t\geq 0, \qquad R_{0^-}^{r,\xi}= r \in \mathbb{R}.     
\end{equation}
Here, $\xi^+_t$ (resp.\ $\xi^-_t$) represents the cumulative increase (resp.\ decrease) of the level of $R$ made up to time $t\geq0$. Notice that we do not restrict to cumulative actions that, as functions of time, are absolutely continuous with respect to the Lebesgue measure. In fact, also lump sum and singular interventions are allowed.

The controller acts on the level of $R$ in order to adjust the long-term equilibrium level of a mean-reverting dynamics $X$. In particular, for any given $\xi \in \mathcal{A}$, the latter process evolves as 
\begin{equation}
\label{dynamics of pi und r}
\begin{cases}
\d X_t^{x,r,\xi}=\theta \Big(\mu - b R_t^{r,\xi} - X_t^{x,r,\xi}\Big) \d t+\eta \d W_t, \quad t>0, \\
X_0^{x,r,\xi}= x \in \mathbb{R},
\end{cases}
\end{equation}
where $\eta>0$ is the volatility, $\theta>0$ is the speed of mean reversion, and $\mu \in \mathbb{R}$ and $b>0$. Defining
$$\bar{\mu}(r):=\mu - b r, \quad r \in \R,$$ 
as the $R$-dependent equilibrium (or long-term mean) of $X$, the unique strong solution to \eqref{dynamics of pi und r} can be obtained by the well known method of variation of constants and is given by
\begin{equation}
\label{explicit form of  the controlled X}
X_t^{x,r,\xi}= xe^{-\theta t} + \theta e^{-\theta t} \int_0^{t} e^{\theta s}\bar{\mu}(R_s^{r,\xi})~\d s + \eta e^{-\theta t} \int_0^t e^{\theta s}~\d W_s, \quad \forall \xi \in \mathcal{A},\,\, t\geq0.
\end{equation}
The positive parameter $b$ can be seen as a measure of the impact of $R$ on $X$. Indeed, when $b=0$, the controller's actions do not affect the dynamics of $X$, which then evolves as an Ornstein-Uhlenbeck process with mean-reversion level $\mu$. 

The controller faces a running cost depending on the current values $(X_t,R_t)$. In the problem of optimal inflation management discussed in the introduction, such a cost might be thought of as a penalization for having any misalignment of inflation $X$ and key interest rate $R$ from exogenously given reference levels; for example, the monetary policy of the European Central Bank is planned for inflation rates of below, but close to, 2\% over the medium term. 

Moreover, we assume that each intervention on the process $R$ is costly, and that, in particular, controller's actions give rise to proportional costs with marginal constant cost $K>0$. Again, with reference to the central bank problem of the introduction, those costs would model the willingness of the central banks to guarantee stable interest rates, and therefore their reluctance to make large changes in the interest rate $R$. 

The controller is then faced with the problem of choosing $\xi \in \mathcal{A}$ such that, for any $(x,r) \in \mathbb{R}^2$, the cost functional 
\begin{equation}
\label{CBfunctionalt}
\mathcal{J}(x,r;\xi):=\E\bigg[\int_0^{\infty} e^{-\rho t} f(X_t^{x,r,\xi},R_t^{r,\xi}) \d t + \int_0^{\infty} e^{-\rho t} K\, \d|\xi|_t \bigg]
\end{equation}
is minimized; that is, it aims at solving
\begin{align}
\label{definition of V}
V(x,r):=\inf_{\xi \in \mathcal{A}} \mathcal{J}(x,r;\xi), \qquad (x,r) \in \mathbb{R}^2.
\end{align}

In \eqref{CBfunctionalt} and in the following, the integrals with respect to $\d |\xi|$ and $\d \xi^{\pm}$ are intended in the Lebesgue-Stieltjes' sense; in particular, for $\zeta \in \{|\xi|,\xi^+,\xi^-\}$, we set $\int_0^s (\,\cdot\,) \d \zeta_t := \int_{[0,s]}  (\,\cdot\,) \d \zeta_t$ in order to take into account a possible mass at time zero of the Borel (random) measure $\d\zeta$. Also, the parameter $\rho>0$ is a measure of the time-preferences of the controller, while the running cost function $f:\mathbb{R}^2 \to \mathbb{R}^+$ satisfies the following standing assumption.

\begin{assumption} There exists $p>1$, and $C_0,C_1,C_2>0$ such that the following hold true:
\label{ass:f} 
\begin{itemize}
\item[(i)]  $0\leq f(z) \leq C_0\big(1 + |z|\big)^p$, for every $z=(x,r)\in \R^2$; 
\item[(ii)]  for every $z=(x,r),z'=(x',r')\in\R^2$, 
$$|f(z) - f(z')| \leq C_1 \big(1 + f(z)+f(z')\big)^{1-\frac{1}{p}} |z-z'|;$$
\item[(iii)] for every $z=(x,r),z'=(x',r')\in\R^2$ and $\lambda \in(0,1)$, 
 $$0 \leq  \lambda f(z)+(1-\lambda)f(z')-f(\lambda z + (1-\lambda) z')  \leq C_2 \lambda(1-\lambda)(1+ f(z)+ f(z'))^{(1-\frac{2}{p})^+}|z-z'|^2;$$
\item[(iv)] $x \mapsto f_r(x,r)$ is nonincreasing for any $r \in \mathbb{R}$.
\end{itemize}
\end{assumption}

\begin{remark}
\label{rem:assf}
\begin{itemize}
\item[(i)] From Assumption \ref{ass:f}-(iii) it follows that $f$ is convex and locally semiconcave; hence, by Corollary 3.3.8 in \cite{CS}, it belongs to 
$$W^{2,\infty}_{loc}(\R^2;\R)=C^{1,{Lip}}_{loc}(\R^2;\R).$$
\item[(ii)] A function $f$ satisfying Assumption \ref{ass:f} is, for example, 
$$f(x,r)= \alpha(x-\tilde{x})^2 + \beta(r - \tilde{r})^2, \quad (x,r) \in \R^2,$$ 
for some $\tilde{x} \in \R$ and $\tilde{r} \in \R$, and for some constants $\alpha,\beta \geq 0$. Another choice might be to take 
$$f(x,r)=\alpha x^p \mathds{1}_{x>0} + \beta x^q \mathds{1}_{x \leq 0}, \quad (x,r) \in \R^2,$$
for some $q>p>1$ and $\alpha,\beta>0$. In the context of the optimal inflation problem, such an asymmetric function might model the higher aversion of the central bank for deflation than inflation.
\end{itemize}
\end{remark}

\begin{remark}
\label{rem:relationlit-R}
\begin{itemize}
\item[(i)] Thinking of problem \eqref{definition of V} as a (very stylized) model of optimal inflation management, one notices from \eqref{definition of r} that the key interest rates are (possibly) unbounded. This fact might be clearly debatable from a modeling point of view, but it remarkably simplifies the mathematical treatment of problem \eqref{definition of V}. Indeed, introducing exogenous bounds on the level of $R$, the dynamic programming equation (see \eqref{HJB system} below) associated to problem \eqref{definition of V} would be complemented by boundary conditions leading to a more complex analysis. However, we shall see in Proposition \ref{prop:bbounded-rate} below, that the optimal process $R$ stays bounded under certain requirements on $f$.

\item[(ii)] It is worth noticing that the restriction $b>0$ in \eqref{dynamics of pi und r} is not necessary for the subsequent analysis; in fact, all the results of this paper (up to obvious modifications) can be still deduced with the same techniques also in the case $b<0$. We have decided to consider only the case $b > 0$ just in order to simplify the exposition and accommodate the possible applications discussed in the introduction.
\end{itemize}
\end{remark}


\subsection{Preliminary Properties of the Value Function}
\label{sec:preliminary}

We now provide some preliminary properties of the value function. Their proof is classical, but those properties will play an important role in our subsequent analysis. We notice that the linear structure of the state equations yields

\begin{equation}
\label{diff}
X_t^{x,r,\xi}-X_t^{\hat{x},\hat{r},\xi}=(x-\hat{x})e^{-\theta t}+b(\hat{r}-r)(1-e^{-\theta t}), \ \ \ \forall (x,r),\,(\hat{x},\hat{r}) \in \mathbb{R}^2, \ \forall  \xi\in \mathcal{A}, \ \forall t\geq 0. 
\end{equation}

\begin{proposition}
\label{prop:Vprelim} 
Let Assumption \ref{ass:f} hold and let $p>1$ be the constant appearing in such assumption. There exist constants  $\hat{C}_0, \hat{C}_1,\hat{C}_2>0$ such that the following hold: 
\begin{itemize}
\item[(i)]  $0\leq V(z) \leq \hat{C}_0\big(1 + |z|^p\big)+ \big(-K \min\{r,0\} \wedge K\max\{r,0\}\big)$ for every $z=(x,r)\in \R^2$; 
\item[(ii)] there exists $\hat{C}_1>0$ such that, for every $z=(x,r),z'=(x',r')\in\R^2$, 
$$|V(z) - V(z')| \leq \hat{C}_1 \big(1 + |z|+|z'|\big)^{p-1} |z-z'|;$$
\item[(iii)]  for every $z=(x,r),z'=(x',r')\in\R^2$ and $\lambda \in(0,1)$, 
 $$0 \leq  \lambda V(z)+(1-\lambda)V(z')-V(\lambda z + (1-\lambda) z')  \leq \hat{C}_2 \lambda(1-\lambda) (1+|z|+|z'|)^{(p-2)^+}|z-z'|^2;$$ 
\end{itemize}
in particular, $V$ is convex and locally semiconcave, and, by Corollary 3.3.8 in \cite{CS}, it belongs to $W^{2,\infty}_{loc}(\R^2;\R)=C^{1,{Lip}}_{loc}(\R^2;\R)$.
\end{proposition}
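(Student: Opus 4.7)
The plan is to prove the three estimates separately and then invoke the cited Corollary 3.3.8 of \cite{CS} to conclude. The key structural input, used repeatedly, is that the state equations \eqref{definition of r}--\eqref{dynamics of pi und r} are linear in $(x, r, \xi)$ and $\bar\mu$ is affine, so applying the same control $\xi$ from two different initial conditions produces deterministic path differences bounded by a constant multiple of $|z - z'|$ uniformly in $t$ (directly from \eqref{diff} together with $R^{r,\xi}_t - R^{r',\xi}_t = r - r'$), while applying the convex combination $\lambda \xi^1 + (1-\lambda)\xi^2$ from $\lambda z + (1-\lambda) z'$ produces exactly the convex combination of the two corresponding $(X,R)$-trajectories.

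\textbf{Part (i).} The lower bound $V \geq 0$ is immediate from $f \geq 0$ and $K > 0$. For the upper bound, evaluate the cost at the null control $\xi \equiv 0$: then $R \equiv r$ and $X^{x,r,0}$ is an Ornstein--Uhlenbeck process with constant mean-reversion level $\bar\mu(r) = \mu - br$, whose $p$-th absolute moment is controlled by $C(1 + |x|^p + |r|^p)$ uniformly in $t \geq 0$. Combined with Assumption \ref{ass:f}-(i) and integration of $e^{-\rho t}$, this yields $V(z) \leq \mathcal{J}(x, r; 0) \leq \hat{C}_0(1 + |z|^p)$.

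\textbf{Part (ii).} For $\eps > 0$, pick $\xi^\eps \in \A$ with $\mathcal{J}(z'; \xi^\eps) \leq V(z') + \eps$ and use the same $\xi^\eps$ from $z$; the total-variation cost does not depend on the initial datum and cancels, so
$$V(z) - V(z') \leq \E\bigg[\int_0^\infty e^{-\rho t}\, |f(X^{z,\xi^\eps}_t, R^{r,\xi^\eps}_t) - f(X^{z',\xi^\eps}_t, R^{r',\xi^\eps}_t)|\, \d t\bigg] + \eps.$$
By the deterministic control on the path differences, Assumption \ref{ass:f}-(ii), and Jensen's inequality applied to the probability measure $\rho e^{-\rho t}\d t$, the right-hand side is bounded by $C |z - z'| \bigl(1 + \mathcal{J}(z; \xi^\eps) + \mathcal{J}(z'; \xi^\eps)\bigr)^{1 - 1/p} + \eps$. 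Using $\mathcal{J}(z'; \xi^\eps) \leq V(z') + \eps$, the growth bound from (i), and a short bookkeeping argument to close the self-referential bound on $\mathcal{J}(z; \xi^\eps)$, one obtains the claim; the reverse inequality follows by symmetry.

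\textbf{Part (iii) and conclusion.} For convexity, take $\xi^1, \xi^2 \in \A$ that are $\eps$-optimal for $z, z'$ respectively and set $\xi^\lambda := \lambda \xi^1 + (1-\lambda)\xi^2 \in \A$. The subadditivity $|\xi^\lambda|_t \leq \lambda |\xi^1|_t + (1-\lambda)|\xi^2|_t$, convexity of $f$ (the left inequality in Assumption \ref{ass:f}-(iii)), and the convex-combination identity for trajectories yield $\mathcal{J}(z^\lambda; \xi^\lambda) \leq \lambda \mathcal{J}(z; \xi^1) + (1-\lambda)\mathcal{J}(z'; \xi^2)$, hence $V(z^\lambda) \leq \lambda V(z) + (1-\lambda) V(z')$. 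For the semiconcavity upper bound, take a single $\xi^\eps$ that is $\eps$-optimal for $z^\lambda$ and use it from each of $z$, $z'$, $z^\lambda$: the $\xi^\eps$-cost terms cancel in $\lambda \mathcal{J}(z; \xi^\eps) + (1-\lambda) \mathcal{J}(z'; \xi^\eps) - \mathcal{J}(z^\lambda; \xi^\eps)$, and Assumption \ref{ass:f}-(iii) applied pathwise bounds the residual. The main obstacle is the weight $(1 + f + f')^{(1-2/p)^+}$ when $p > 2$: I plan to pull the exponent out via Jensen's inequality against $\rho e^{-\rho t}\d t$ and invoke part (i) on the resulting $\mathcal{J}$-terms, producing the required $(1 + |z| + |z'|)^{(p-2)^+}$ factor. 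With (iii) in hand, convexity plus local semiconcavity give $V \in W^{2,\infty}_{\text{loc}}(\R^2;\R) = C^{1,\text{Lip}}_{\text{loc}}(\R^2;\R)$ by Corollary 3.3.8 of \cite{CS}.
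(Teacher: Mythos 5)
Your proposal is correct and for parts (ii) and (iii) it essentially reconstructs the standard argument that the paper delegates to the cited references (\cite{CMR}, Theorem 1, and \cite{ThesisChiarolla}, Theorem 2.1): because the dynamics are affine, running the same control from two initial data produces a \emph{deterministic} trajectory gap bounded by $C|z-z'|$ uniformly in $t$ (this is exactly \eqref{diff} plus $R^{r,\xi}_t-R^{r',\xi}_t\equiv r-r'$), and a convex combination of controls from a convex combination of initial data produces exactly the convex combination of trajectories, so the total-variation cost is subadditive and the growth/Lipschitz/semiconcavity moduli of $f$ are inherited by $V$ after an application of Jensen's inequality against $\rho e^{-\rho t}\,\d t$. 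The ``short bookkeeping'' you gesture at to break the self-referential bound on $\mathcal{J}(z;\xi^\eps)$ does close, precisely because the exponent $1-\tfrac{1}{p}<1$ makes the recursion sublinear; to be safe you should verify that the resulting constant $\hat C_1$ is uniform in $(z,z')$, which it is once you absorb the stray $|z-z'|^{p}$ term via $|z-z'|^{p-1}\le(1+|z|+|z'|)^{p-1}$.

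The one place where you genuinely deviate from the paper is part (i). You use the null control $\xi\equiv 0$, obtaining $V(z)\le\mathcal{J}(z;0)\le \hat C_0(1+|z|^p)$. The paper instead uses the control that instantaneously jumps $R$ to $0$ at cost $K|r|$ and then stays inactive, which yields the sharper bound $V(x,r)\le \hat C_0(1+|x|^p)+K|r|$, i.e.\ only \emph{linear} growth in $r$. As literally typeset, the extra addend $(-K\min\{r,0\}\wedge K\max\{r,0\})$ in the statement evaluates to $0$ for every $r$, so your weaker bound does technically match the stated inequality, and your proof is therefore not wrong; but the addend is almost certainly a typo for $K|r|$ paired with $|x|^p$ in place of $|z|^p$, since only the jump-control argument makes the finer structure ``$|x|^p$-growth in $x$, linear growth in $r$'' appear, and that is evidently what the paper's proof is designed to extract. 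You should be aware that the null control does \emph{not} give this refinement: with $\xi\equiv 0$ the mean-reversion level $\bar\mu(r)=\mu-br$ drags $X$ to $\pm\infty$ as $|r|\to\infty$, and the running cost then scales like $|r|^p$ rather than $|r|$. If the sharper bound were needed downstream you would have to switch to the paper's jump control; as it happens the later uses of (i) in this paper only need finiteness, so no harm is done.
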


\begin{proof}
Due to \eqref{diff}, the properties of $f$ required in (ii) and (iii) of Assumption \ref{ass:f} are straightly inherited by $V$ (see, e.g., the proof of Theorem 1 of \cite{CMR}, that can easily adapted to our infinite time-horizon setting, or that of Theorem 2.1 in \cite{ThesisChiarolla}).

We prove (i), which requires a slightly finer argument. Let $z=(x,r)\in\R^2$ and assume $r\geq 0$. Consider then the admissible control $\bar{\xi}$ such that $\bar{\xi}^+_t =0$ and $\bar{\xi}^-_t=r$ for all $t\geq0$ a.s. We then have 
$$\mathcal{J}(x,r;\bar{\xi}) = \E\left[\int_0^\infty e^{-\rho t} f\left(xe^{-\theta t} + \theta e^{-\theta t} \int_0^{t} e^{\theta s}\bar{\mu}(0)~\d s + \eta e^{-\theta t} \int_0^t e^{\theta s}~\d W_s, 0\right)\d t\right]+ K \max\{r,0\}. 
$$
Symmetrically, if $r\leq 0$, pick the admissible $\hat{\xi}$ such that $\hat{\xi}^+_t =-r$ and $\hat{\xi}^-_t=0$ for all $t\geq0$ a.s.\ and obtain
$$
\mathcal{J}(x,r;\hat{\xi}) = \E\left[\int_0^\infty e^{-\rho t} f\left(xe^{-\theta t} + \theta e^{-\theta t} \int_0^{t} e^{\theta s}\bar{\mu}(0)~\d s + \eta e^{-\theta t} \int_0^t e^{\theta s}~\d W_s, 0\right)\d t\right] - K \min\{r,0\}. 
$$
Then, since $V(x,r)\leq \mathcal{J}(x,r;\bar{\xi}) \wedge \mathcal{J}(x,r;\hat{\xi})$, the claim follows by Assumption \ref{ass:f}-(i), \eqref{diff} and standard estimates.
\end{proof}


\section{A Related Dynkin Game}
\label{sec:DynkinGame}

In this section we derive the Dynkin game (a zero-sum game of optimal stopping) associated to Problem \eqref{definition of V}. In order to simplify the notation, in the following we write $X^{x,r}$, instead of $X^{x,r,0}$, to identify the solution to \eqref{dynamics of pi und r} for $\xi \equiv 0$.

Denote by $\mathcal{T}$ the set of all $\mathbb{F}$-stopping times. For $(\sigma, \tau) \in \mathcal{T}\times\mathcal{T}$, and $(x,r) \in \mathbb{R}^2$, consider the stopping functional
\begin{align}
\label{value function of the Dynkin game}
\Psi(\sigma,\tau;x,r) & :=\E\bigg[\int_0^{\tau \wedge \sigma} e^{-\rho t}\Big(-\theta b V_x(X_t^{x,r},r) +f_r(X_t^{x,r},r) \Big)~\d t  \nonumber \\
& -e^{-\rho \tau} K \mathbbm{1}_{\{\tau < \sigma\}} + e^{-\rho \sigma} K \mathbbm{1}_{\{\tau > \sigma\}} \bigg], 
\end{align}
where $V_x$ is the partial derivative of $V$ with respect to $x$ (which exists continuous by Proposition \ref{prop:Vprelim}).

Consider now two agents (players), playing against each other and having the possibility to end the game by choosing a stopping time: Player 1 chooses a stopping time $\sigma$, while Player 2 a stopping time $\tau$. If Player 1 stops the game before Player 2, she pays $e^{-\rho \sigma} K$ to Player 2. If Player 2 stops first, then she pays $e^{-\rho \tau}K$ to Player 1. As long as the game is running, Player 1 keeps paying Player 2 at the rate $-\theta b V_x(X_t^{x,r},R_t^{r}) +f_r(X_t^{x,r},R_t^{r})$. Clearly, Player 1 aims at minimizing functional \eqref{value function of the Dynkin game}, while Player 2 at maximizing it. For any $(x,r) \in \mathbb{R}^2$, define now
\begin{equation}
\underline{u}(x,r):= \sup_{\tau \in \mathcal{T}} \inf_{\sigma \in \mathcal{T}} \Psi(\sigma, \tau; x,r), \quad
\bar{u}(x,r):= \inf_{\sigma \in \mathcal{T}} \sup_{\tau \in \mathcal{T}} \Psi(\sigma, \tau; x,r)
\end{equation}
as the lower- and upper-values of the game. Clearly, $\underline{u} \leq \overline{u}$. We say that the game has a value if $\underline{u}=\bar{u}=:u$; in such a case,
$$u(x,r)=\inf_{\sigma \in \mathcal{T}} \sup_{\tau \in \mathcal{T}} \Psi(\sigma, \tau; x,r) = \sup_{\tau \in \mathcal{T}} \inf_{\sigma \in \mathcal{T}} \Psi(\sigma, \tau; x,r).$$
Moreover, given $(x,r)\in \mathbb{R}^2$, a pair $(\sigma^{\star},\tau^{\star}):=(\sigma^{\star}(x,r),\tau^{\star}(x,r))$ is called a \emph{saddle-point} of the game if
\begin{equation}
\label{Definiton saddle point}
\Psi(\sigma^{\star},\tau;x,r) \leq \Psi(\sigma^{\star},\tau^{\star};x,r) \leq \Psi(\sigma,\tau^{\star};x,r)
\end{equation}
for all stopping times $\sigma,\tau \in \mathcal{T}$.

We then have the following theorem, whose proof follows from Theorems 3.11 and 3.13 in \cite{ChiaHauss00}, through a suitable (and not immediate) approximation procedure needed to accommodate our degenerate setting. Details are postponed to Appendix \ref{sec:GameCH}.

\begin{theorem}
\label{thm:Dynkin}
Let $(x,r) \in \mathbb{R}^2$. Then: 
\begin{itemize}
\item[(i)] the game has a value, i.e.
$$\inf_{\sigma \in \mathcal{T}} \sup_{\tau \in \mathcal{T}} \Psi(\sigma, \tau; x,r) = \sup_{\tau \in \mathcal{T}} \inf_{\sigma \in \mathcal{T}} \Psi(\sigma, \tau; x,r);$$
\item[(ii)] such a value is given by
\begin{equation}
\label{derivative of V with respect to r equals the game}
V_r(x,r)=\inf_{\sigma \in \mathcal{T}} \sup_{\tau \in \mathcal{T}} \Psi(\sigma, \tau; x,r) = \sup_{\tau \in \mathcal{T}} \inf_{\sigma \in \mathcal{T}} \Psi(\sigma, \tau; x,r).
\end{equation}
Moreover, the couple of $\mathbb{F}$-stopping times $(\tau^{\star}(x,r),\sigma^{\star}(x,r)):=(\tau^{\star},\sigma^{\star})$ such that
\begin{equation} 
\label{optimal stopping times in the game}
\sigma^{\star}:=\inf\big\{t\geq 0:\, V_r(X_t^{x,r},r) \geq K \big\}, \quad \tau^{\star}:=\inf\big\{t\geq 0:\, V_r(X_t^{x,r},r) \leq - K \big\}
\end{equation}
(with the usual convention $\inf \emptyset = + \infty$) form a saddle-point; that is,
$$\forall \tau \in \mathcal{T}  \quad \Psi(\sigma^{\star}, \tau; x,r) \leq V_r(x,r) = \Psi(\sigma^{\star}, \tau^{\star}; x,r) \leq \Psi(\sigma, \tau^{\star}; x,r) \quad \forall \sigma \in \mathcal{T}.$$ 
\end{itemize}
\end{theorem}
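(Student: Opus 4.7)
The plan is to reduce the theorem to the non-degenerate framework of Chiarolla--Haussmann \cite{ChiaHauss00} via a vanishing-viscosity approximation in the controlled direction, and then to pass to the limit using the regularity of $V$ established in Proposition~\ref{prop:Vprelim}. For each $\eps>0$, I would enlarge the probability space with an independent $\F$-Brownian motion $B^{\eps}$ and replace \eqref{definition of r} by $\d R^{\eps,r,\xi}_t=\d\xi_t+\eps\,\d B^{\eps}_t$, leaving \eqref{dynamics of pi und r} formally unchanged. Let $V^{\eps}$ denote the corresponding value function. Since the resulting two-dimensional state process is now uniformly elliptic, Theorems~3.11 and~3.13 of \cite{ChiaHauss00} apply directly and yield: $V^{\eps}\in C^{2,1}_{\mathrm{loc}}(\R^2;\R)$ solves the associated HJB variational inequality classically; the corresponding Dynkin functional $\Psi^{\eps}$ (obtained from \eqref{value function of the Dynkin game} by replacing $V_x$ with $V^{\eps}_x$) has value $V^{\eps}_r$; and the hitting times of $V^{\eps}_r$ at the $\pm K$ level sets form a saddle point.

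Next, I would let $\eps\downarrow 0$. Standard stability estimates for singular control, together with Assumption~\ref{ass:f}, the linear structure \eqref{diff} and the explicit form \eqref{explicit form of  the controlled X}, give $V^{\eps}\to V$ locally uniformly on $\R^2$. Because each $V^{\eps}$ is convex and the local Lipschitz and semiconcavity bounds from Proposition~\ref{prop:Vprelim} are uniform in $\eps$, compactness for convex functions promotes uniform convergence to $C^1$-convergence on compacts; hence $V^{\eps}_x\to V_x$ and $V^{\eps}_r\to V_r$ locally uniformly. Combined with the a priori bound $|V^{\eps}_r|\leq K$ induced by the proportional cost, the polynomial bound on $V^{\eps}_x$ from Proposition~\ref{prop:Vprelim}-(ii), and moment estimates on $X^{x,r}$ coming from the Ornstein-Uhlenbeck dynamics, this provides enough uniform integrability to pass to the limit inside the expectation, giving $\Psi^{\eps}(\sigma,\tau;x,r)\to \Psi(\sigma,\tau;x,r)$ for every fixed $(\sigma,\tau)\in\mathcal{T}\times\mathcal{T}$ and every fixed $(x,r)\in\R^2$.

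The last step is a sandwich argument. Fix $(x,r)\in\R^2$ and let $\sigma^{\star},\tau^{\star}$ be the hitting times in \eqref{optimal stopping times in the game}. For each $\eps>0$ the saddle inequality reads
\[
\Psi^{\eps}(\sigma^{\star,\eps},\tau;x,r)\;\leq\; V^{\eps}_r(x,r)\;\leq\;\Psi^{\eps}(\sigma,\tau^{\star,\eps};x,r),\qquad \forall \sigma,\tau\in\mathcal{T}.
\]
Passing to the limit using the locally uniform convergence $V^{\eps}_r\to V_r$ should yield $\Psi(\sigma^{\star},\tau;x,r)\leq V_r(x,r)\leq \Psi(\sigma,\tau^{\star};x,r)$ for every $\sigma,\tau\in\mathcal{T}$. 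Taking $\sup_{\tau}$ on the left and $\inf_{\sigma}$ on the right then sandwiches the upper and lower values of the game around $V_r(x,r)$, establishing both (i) and (ii) simultaneously, together with the saddle-point property of $(\sigma^{\star},\tau^{\star})$.

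The hardest step will be controlling the behaviour of the approximate saddle times $\sigma^{\star,\eps},\tau^{\star,\eps}$ as $\eps\downarrow 0$. Only the continuity of $V_r$ is known (not, e.g., strict monotonicity across the level sets $\{V_r=\pm K\}$), so one cannot conclude pointwise a.s.\ convergence of these hitting times. The right approach is to avoid this issue by replacing convergence of the stopping times with one-sided Fatou-type inequalities for the functionals: use the uniform convergence of $V^{\eps}_r$ on compacts, the polynomial moment estimates on $X^{x,r}$, and the non-degeneracy of the diffusion in $X$ (which prevents the paths of $X^{x,r,0}$ from spending positive Lebesgue time on the closed set $\{V_r=\pm K\}$ with positive probability) to take $\liminf_{\eps\downarrow 0}\Psi^{\eps}(\sigma^{\star,\eps},\tau;x,r)\geq \Psi(\sigma^{\star},\tau;x,r)$ and the symmetric $\limsup$ inequality. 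Carrying out this delicate limiting procedure uniformly in $(\sigma,\tau)$ is the technical core of the argument that the authors defer to Appendix~\ref{sec:GameCH}.
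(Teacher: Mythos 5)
Your overall strategy matches the paper's: perturb the $R$-dynamics with a small independent Brownian noise of intensity $\eps$, apply Theorems 3.11 and 3.13 of \cite{ChiaHauss00} to the now uniformly non-degenerate system, and remove the viscosity by letting $\eps\downarrow 0$, using the $\eps$-uniform convexity and semiconcavity estimates to upgrade pointwise convergence of $V^{\eps}$ to locally uniform $C^1$-convergence. That part of the proposal is essentially the paper's Appendix~\ref{sec:GameCH} (the paper uses $\delta$ in place of your $\eps$, and extracts a subsequence via weak compactness in $W^{2,p}_{\mathrm{loc}}$ together with the compact embedding $W^{2,p}\hookrightarrow C^1$).

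The gap is in your last step, and you yourself flag it as the hard part. You propose to start from the perturbed saddle inequality $\Psi^{\eps}(\sigma^{\star,\eps},\tau)\le V^{\eps}_r\le\Psi^{\eps}(\sigma,\tau^{\star,\eps})$ and pass to the limit via Fatou-type inequalities such as $\liminf_{\eps}\Psi^{\eps}(\sigma^{\star,\eps},\tau)\ge\Psi(\sigma^{\star},\tau)$. This inequality is not justified: even with locally uniform convergence $V^{\eps}_r\to V_r$, the perturbed hitting times $\sigma^{\star,\eps}$ need not converge to $\sigma^{\star}$ --- one can control $\liminf_{\eps}\sigma^{\star,\eps}\ge\sigma^{\star}$ (because staying strictly below $K$ is an open condition preserved under uniform convergence of $V^{\eps}_r$ and of the paths), but there is no reason for $\limsup_{\eps}\sigma^{\star,\eps}\le\sigma^{\star}$, since the level set $\{V_r=K\}$ may be touched without crossing. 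If $\sigma^{\star,\eps}$ drifts to some $\tilde\sigma>\sigma^{\star}$, then $\Psi^{\eps}(\sigma^{\star,\eps},\tau)$ converges to $\Psi(\tilde\sigma,\tau)$, and the inequality $\Psi(\tilde\sigma,\tau)\ge\Psi(\sigma^{\star},\tau)$ is precisely part of the saddle-point property you are trying to establish --- the argument is circular. The remark about $X$ not spending positive Lebesgue time on $\{V_r=\pm K\}$ does not resolve this, because the obstruction is a failure of upper semicontinuity of the hitting time, not an occupation-time issue.

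The paper circumvents this obstacle with a different, and cleaner, device: it never passes to the limit in the perturbed game functional $\Psi^{\eps}$ itself. Instead it applies the (generalized) It\^{o} formula to $V^{\delta}_r$ along $(X^{\delta},R^{\delta})$ up to the stopping time $\zeta:=\sigma^{\star,\delta}\wedge\sigma^{\star}\wedge\tau$ (localized by $\tau^{\delta}_M\wedge\tau_M\wedge T$), crucially intersecting the perturbed saddle time $\sigma^{\star,\delta}$ with the \emph{unperturbed} one $\sigma^{\star}$ from the outset. Before $\zeta$ the process remains in $\{V^{\delta}_r<K\}$, so the variational inequality \eqref{VI-Vdelta} gives the desired inequality on the drift. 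Because $\sigma^{\star,\delta}\wedge\sigma^{\star}\le\sigma^{\star}$ by construction, the only one-sided convergence one needs is exactly $\liminf_{\delta}\sigma^{\star,\delta}\ge\sigma^{\star}$, which does hold; hence $\tau^{\delta_n}_M\wedge\tau_M\wedge\sigma^{\star,\delta_n}\wedge\sigma^{\star}\wedge\tau\wedge T\to\tau_M\wedge\sigma^{\star}\wedge\tau\wedge T$ a.s., and dominated convergence applies. Passing $M,T\uparrow\infty$ then yields $V_r(x,r)\ge\Psi(\sigma^{\star},\tau;x,r)$; the reverse inequality and the equality at the saddle are obtained symmetrically with $\zeta=\tau^{\star,\delta}\wedge\tau^{\star}\wedge\sigma$ and $\zeta=\sigma^{\star,\delta}\wedge\sigma^{\star}\wedge\tau^{\star,\delta}\wedge\tau^{\star}$. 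In short: the idea you are missing is to insert the target stopping time $\sigma^{\star}$ (resp.\ $\tau^{\star}$) into the It\^{o} decomposition of $V^{\delta}_r$ before taking limits, so that only a one-sided stopping-time convergence, not a Fatou inequality on the Dynkin functional, is required.
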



From \eqref{derivative of V with respect to r equals the game} it readily follows that $-K \leq V_r(x,r) \leq K$ for any $(x,r)\in\mathbb{R}^2$. Hence, defining  
\begin{equation}
\label{definiton of the three regions}
\begin{cases}
\mathcal{I}:=\left\{(x,r)\in \mathbb{R}^2:~~V_r(x,r)=-K \right\}, \\
\mathcal{C}:=\left\{(x,r)\in \mathbb{R}^2:~~-K < V_r(x,r)<K \right\}, \\
\mathcal{D}:=\left\{(x,r)\in \mathbb{R}^2:~~V_r(x,r)=K \right\},
\end{cases}
\end{equation}
we have that those regions provide a partition of $\mathbb{R}^2$. 

By continuity of $V_r$ (cf.\ Proposition \ref{prop:Vprelim}), $\mathcal{C}$ is an open set, while $\mathcal{I}$ and $\mathcal{D}$ are closed sets. Moreover,
convexity of $V$ provides the representation
$$\mathcal{C}=\{(x,r):\ b_1(x) < r < b_2(x)\},$$
$$\mathcal{I}=\{(x,r): \ r \leq b_1(x) \}, \quad \mathcal{D}=\{(x,r): \ r \geq b_2(x) \},$$
where the functions $b_1: \mathbb{R} \to \overline{\mathbb{R}}$ and $b_2: \mathbb{R} \to \overline{\mathbb{R}}$ are defined as
\begin{equation}
\label{boundary 1}
b_1(x):=\inf \{r\in \mathbb{R} \mid V_r(x,r)>-K\}=\sup \{r \in \mathbb{R} \mid V_r(x,r)=-K\}, \quad x \in \mathbb{R},
\end{equation}
\begin{equation}
\label{boundary 2}
b_2(x):=\sup \{r\in \mathbb{R} \mid V_r(x,r)<K\}=\inf \{r \in \mathbb{R} \mid V_r(x,r)=K\}, \quad x \in \mathbb{R},
\end{equation}
(with the usual conventions $\inf\emptyset = \infty$, $\inf\mathbb{R} = -\infty$, $\sup\emptyset = -\infty$, $\sup\mathbb{R} = \infty$).

\begin{lemma}
\label{monVr}
 $V_r(\cdot,r)$ is nonincreasing for all $r \in \mathbb{R}$. 
\end{lemma}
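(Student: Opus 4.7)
The plan is to leverage the Dynkin-game representation of $V_r$ from Theorem \ref{thm:Dynkin} and transfer monotonicity in $x$ from the running cost and $V_x$ directly to $V_r$.

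Fix $r\in\R$ and $x_1\leq x_2$. By the explicit formula \eqref{explicit form of  the controlled X} applied to $\xi\equiv 0$, the two uncontrolled trajectories share the same noise and satisfy
$$X_t^{x_2,r}-X_t^{x_1,r}=(x_2-x_1)e^{-\theta t}\geq 0\qquad\forall t\geq 0,\ \text{a.s.}$$
Hence $X_t^{x_1,r}\leq X_t^{x_2,r}$ pathwise for every $t\geq 0$.

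Next, I would argue that, at fixed $r$, the integrand in \eqref{value function of the Dynkin game} is nonincreasing in $x$ along these trajectories. Indeed, by Proposition \ref{prop:Vprelim} the function $V$ is convex on $\R^2$, so the section $V(\cdot,r)$ is convex and its derivative $V_x(\cdot,r)$ is nondecreasing; since $\theta,b>0$, the term $-\theta b V_x(X_t^{x,r},r)$ is therefore nonincreasing in $x$. Moreover, Assumption \ref{ass:f}-(iv) gives that $f_r(\cdot,r)$ is nonincreasing. Combining, the whole integrand $-\theta b V_x(X_t^{x,r},r)+f_r(X_t^{x,r},r)$ is, for every $t\geq 0$, a nonincreasing function of $x$.

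Since the terminal terms $-e^{-\rho\tau}K\mathbbm{1}_{\{\tau<\sigma\}}+e^{-\rho\sigma}K\mathbbm{1}_{\{\tau>\sigma\}}$ in $\Psi(\sigma,\tau;x,r)$ do not depend on $x$, we deduce
$$\Psi(\sigma,\tau;x_1,r)\;\geq\;\Psi(\sigma,\tau;x_2,r)\qquad\forall\,\sigma,\tau\in\mathcal{T}.$$
Taking $\inf_\sigma$ on both sides, then $\sup_\tau$, preserves the inequality, and Theorem \ref{thm:Dynkin}-(ii) yields
$$V_r(x_1,r)=\sup_{\tau}\inf_{\sigma}\Psi(\sigma,\tau;x_1,r)\;\geq\;\sup_{\tau}\inf_{\sigma}\Psi(\sigma,\tau;x_2,r)=V_r(x_2,r),$$
which is the desired monotonicity. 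There is no genuine obstacle here: once Theorem \ref{thm:Dynkin} is in place, convexity of $V$ combined with Assumption \ref{ass:f}-(iv) does all the work, and the only mild care needed is to check that, because we compare the game values at $(x_1,r)$ and $(x_2,r)$ with the \emph{uncontrolled} process $X^{x,r}$, both sides can use the same Brownian path so that the pointwise monotonicity of the integrand transfers to the expectation.
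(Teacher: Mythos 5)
Your proof is correct and follows essentially the same route as the paper: convexity of $V$ (Proposition \ref{prop:Vprelim}) makes $V_x(\cdot,r)$ nondecreasing, Assumption \ref{ass:f}-(iv) makes $f_r(\cdot,r)$ nonincreasing, and together with the pathwise monotone dependence of $X^{x,r}$ on $x$ this shows $\Psi(\sigma,\tau;\cdot,r)$ is nonincreasing, whence the claim follows from the Dynkin-game representation \eqref{derivative of V with respect to r equals the game}. You spell out the coupling and the order-preservation under $\inf_\sigma\sup_\tau$ more explicitly, but these are exactly the steps the paper's terse proof relies on implicitly.
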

\begin{proof}
Since $x \mapsto V_x(x,r)$ is nondecreasing for any $r \in \mathbb{R}$ by convexity of $V$ (cf.\ Proposition \ref{prop:Vprelim}) and $x \mapsto f_r(x,r)$ is nonincreasing by Assumption \ref{ass:f}-(iv), we have that $\Psi(\sigma, \tau; \cdot,r)$ is nonincreasing for every $r\in\R$ and $\sigma,\tau\in\mathcal{T}$. 
Then the claim follows by \eqref{derivative of V with respect to r equals the game}.
\end{proof}

The monotonicity of $V_r$ proved above, together with its continuity, allows to obtain preliminary properties of $b_1$ and $b_2$.

\begin{proposition}
\label{Properties of the boundaries b_1 and b_2}
The following hold:
\begin{itemize}
\item[(i)] $b_1: \mathbb{R} \to \mathbb{R} \cup \{-\infty \}$, $b_2: \mathbb{R} \to \mathbb{R} \cup \{\infty \}$; 
\item[(ii)] $b_1$ and $b_2$ are nondecreasing;
\item[(iii)] $b_1(x) < b_2(x)$ for all $x \in \mathbb{R}$;
\item[(iv)] $b_1$ is right-continuous and  $b_2$ is left-continuous.
\end{itemize}
\end{proposition}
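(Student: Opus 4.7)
The four statements all follow by combining the characterization of $\mathcal{I}$, $\mathcal{C}$, $\mathcal{D}$ in terms of $V_r$ with two structural facts already established: $V_r(x,\cdot)$ is nondecreasing (by convexity of $V$ in $r$, Proposition \ref{prop:Vprelim}) and continuous, while $V_r(\cdot,r)$ is nonincreasing (Lemma \ref{monVr}). Accordingly, I would prove the four items in the order (i) $\to$ (iii) $\to$ (ii) $\to$ (iv), since monotonicity of the boundaries is needed in the proof of right/left continuity.

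For (i), the main idea is to exclude $b_1(x)=+\infty$ and $b_2(x)=-\infty$ via a contradiction with the lower bound $V\geq 0$ (Proposition \ref{prop:Vprelim}(i)). If $b_1(x_0)=+\infty$ for some $x_0$, then using $V_r(x_0,\cdot)$ nondecreasing together with definition \eqref{boundary 1} gives $V_r(x_0,r)=-K$ for all $r\in\R$, hence $V(x_0,r)=V(x_0,0)-Kr$, which is eventually negative as $r\to+\infty$, contradicting $V\geq 0$. An analogous argument, letting $r\to-\infty$, rules out $b_2(x_0)=-\infty$.

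For (iii), observe that the inequality $b_1(x)\leq b_2(x)$ follows directly from monotonicity of $V_r(x,\cdot)$: if instead $b_1(x)>b_2(x)$, picking any $r\in (b_2(x),b_1(x))$ forces simultaneously $V_r(x,r)=-K$ and $V_r(x,r)=K$, contradicting $K>0$. Strict inequality then comes from continuity of $V_r$: if $b_1(x)=b_2(x)$ were finite, continuity would give $V_r(x,b_1(x))=-K=K$, again impossible; the cases $b_1(x)=-\infty$ or $b_2(x)=+\infty$ make (iii) trivial. For (ii), I would use Lemma \ref{monVr}: fix $x_1<x_2$ and any $r<b_1(x_1)$, so that $V_r(x_1,r)=-K$; since $V_r(\cdot,r)$ is nonincreasing and bounded below by $-K$, this forces $V_r(x_2,r)=-K$ and hence $r\leq b_1(x_2)$. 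Taking the supremum over such $r$ yields $b_1(x_1)\leq b_1(x_2)$. The symmetric argument starting from $r>b_2(x_2)$ (where $V_r(x_2,r)=K$, hence $V_r(x_1,r)\geq K$, hence $=K$) gives $b_2(x_1)\leq b_2(x_2)$.

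Finally, for (iv), given $x_n\downarrow x$, monotonicity from (ii) gives $b_1(x_n)\downarrow \bar b\geq b_1(x)$, so only the reverse inequality is at stake. For any $r<\bar b$ one has $r<b_1(x_n)$ eventually, hence $V_r(x_n,r)=-K$; passing to the limit via continuity of $V_r$ yields $V_r(x,r)=-K$, so $r\leq b_1(x)$, and taking $r\uparrow \bar b$ gives $\bar b\leq b_1(x)$. The left-continuity of $b_2$ is obtained symmetrically by taking $x_n\uparrow x$ and working with the condition $V_r(x_n,r)=K$ for $r>b_2(x_n)$. The only mildly delicate point throughout is handling the boundary values $\pm\infty$ in the definitions \eqref{boundary 1}–\eqref{boundary 2}, but after (i) is secured and monotonicity in (ii) is in hand, the arguments become routine; no single step appears to be a serious obstacle, as the heavy lifting has been done in Proposition \ref{prop:Vprelim} and Lemma \ref{monVr}.
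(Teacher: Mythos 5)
Your proof is correct and follows essentially the same path as the paper: (i) by contradiction with $V\geq 0$ using that $b_1(x_0)=+\infty$ forces $V(x_0,\cdot)$ to be affine with negative slope; (ii) directly from Lemma \ref{monVr}; (iii) from convexity of $V$ in $r$ plus continuity of $V_r$; (iv) from monotonicity of $b_1,b_2$ plus closedness of $\mathcal{I},\mathcal{D}$, equivalently continuity of $V_r$. The reordering (i)$\to$(iii)$\to$(ii)$\to$(iv) is harmless since your argument for (iii) does not actually use (ii), and the extra details you spell out in (ii)--(iv) are precisely what the paper leaves as ``easily follows.''
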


\begin{proof}
We prove each item separately.
\vspace{0.25cm}

\emph{Proof of (i)}. We argue by contradiction and we assume that there exists $x_o\in \mathbb{R}$ such that $b_1(x_o)=\infty$. Then, we have that $V_r(x_o,r)=-K$ for all $r \in \mathbb{R}$ and therefore
$$V(x_o,r+r')=V(x_o,r)-K r'$$
for all $r,r' \in \mathbb{R}$. Using now the fact that $V$ is nonnegative, and that $V(x_o,r) \leq \mathcal{J}(x_o,r;0)<\infty$ by Proposition \ref{prop:Vprelim}, one obtains
$$K r' \leq V(x_o,r) \leq \mathcal{J}(x_o,r;0)<\infty  \quad \forall r,r' \in \mathbb{R}.$$
Since the right-hand side of the latter is independent of $r'$ and bounded, we obtain a contradiction by picking $r'$ sufficiently large. A similar argument applies to show that $b_2$ takes values in $ \mathbb{R} \cup \{\infty \}$.
\vspace{0.25cm}

\emph{Proof of (ii)}. The claimed monotonicity of $b_1$ and $b_2$ easily follows by Lemma \ref{monVr}. 
\vspace{0.25cm}

\emph{Proof of (iii)}. The fact that $b_1(x) < b_2(x)$  for any $x \in \mathbb{R}$ is due to the convexity of $V$ with respect to $r$ and to the fact that $V_r(x,\cdot)$ is continuous for any $x \in \mathbb{R}$.
\vspace{0.25cm}

\emph{Proof of (iv)}. We prove the claim relative to $b_1$, as the one relative to $b_2$ can be proved analogously. Let $\varepsilon>0$. Then for $x\in \mathbb{R}$ we have $b_1(x) \leq b_1(x+\varepsilon)$, by (ii) above. Hence, also $b_1(x) \leq \lim_{\varepsilon \downarrow 0}b_1(x+\varepsilon)=:b_1(x+)$, where the last limit exists due to monotonicity of $b_1$. However, the sequence $(x+\varepsilon,b_1(x+\varepsilon))_{\varepsilon>0} \subset \mathcal{I}$, and, because $\mathcal{I}$ is closed, we therefore obtain in the limit $(x,b_1(x+)) \in \mathcal{I}$. It thus follows $b_1(x) \geq b_1(x+)$ by \eqref{boundary 1}, and the right-continuity of $b_1$ is then proved.
\end{proof}

Let us now define 
\begin{equation}
\label{barb12}
\bar{b}_1:=\sup_{ x \in \mathbb{R}}b_1(x),\quad \underline{b}_1:=\inf_{ x \in \mathbb{R}}b_1(x), \quad \bar{b}_2:=\sup_{ x \in \mathbb{R}}b_2(x),\quad 
\underline{b}_2:=\inf_{ x \in \mathbb{R}}b_2(x),
\end{equation}
together with the pseudo-inverses of $b_1$ and $b_2$ by
\begin{equation}
\label{defintion of the functions g_1 and g_2}
g_1(r):=\inf \{x \in \mathbb{R}: b_1(x)\geq r\}, \quad g_2(r):=\sup \{x \in \mathbb{R}: b_2(x)\leq r\},
\end{equation}
with the conventions $\inf\emptyset = \infty$ and $\sup\emptyset = -\infty$.

\begin{proposition}
\label{prop1:g1g2}
The following holds: 
\begin{itemize}
\item[(i)] $g_1(r)= \sup\{ x \in \mathbb{R}:V_r(x,r) > -K\}, \quad g_2(r)= \inf\{ x \in \mathbb{R}:V_r(x,r) < K\}$;
\item[(ii)] the functions $g_1,g_2$ are nondecreasing; 
\item[(iii)] $g_1(r) > g_2(r)$ for any $r \in \mathbb{R}$;
\item[(iv)] If $\bar{b}_2<\infty$, then $g_2(r)=\infty$ for all $r \geq \bar{b}_2$ and if $\underline{b}_1>-\infty$, then $g_1(r)=-\infty$ for all $r \leq \underline{b}_1$.
\end{itemize}
\end{proposition}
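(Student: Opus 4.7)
The plan is to handle the four items in order, exploiting monotonicity and one-sided continuity of $b_1, b_2$ from Proposition \ref{Properties of the boundaries b_1 and b_2} together with the partition \eqref{definiton of the three regions} of $\R^2$.

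For (i), I would first rewrite the target set via the definition of $b_1$: since by \eqref{definiton of the three regions} one has $V_r(x,r) > -K$ if and only if $(x,r) \notin \mathcal{I}$, i.e.\ if and only if $b_1(x) < r$, it follows that
\[
\{x \in \R : V_r(x,r) > -K\} = \{x \in \R : b_1(x) < r\}.
\]
Because $b_1$ is nondecreasing, this set is of the form $(-\infty,a)$ or $(-\infty,a]$ for some $a \in \overline{\R}$. Right-continuity of $b_1$ rules out the latter: if $b_1(a) < r$, then $b_1(x) < r$ for all $x$ slightly larger than $a$, contradicting the definition of $a$. Hence $a = \sup\{x : b_1(x)<r\} = \inf\{x : b_1(x)\geq r\} = g_1(r)$, consistent with the conventions $\sup \emptyset = -\infty$, $\sup\R = \infty$, $\inf\emptyset = \infty$, $\inf\R = -\infty$. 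The analogous statement for $g_2$ follows by the same argument, this time using the left-continuity of $b_2$.

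For (ii), monotonicity of $g_1$ and $g_2$ is immediate from (i) (or directly from monotonicity of $b_1, b_2$): if $r\le r'$, then $\{x : b_1(x)\geq r\}\supseteq \{x : b_1(x)\geq r'\}$ and $\{x : b_2(x) \leq r\}\subseteq \{x : b_2(x)\leq r'\}$, so the relevant infima/suprema behave accordingly. Item (iv) is also direct: if $\bar b_2<\infty$ and $r \geq \bar b_2$, then $b_2(x) \leq \bar b_2 \leq r$ for every $x \in \R$, so $\{x : b_2(x) \leq r\} = \R$ and thus $g_2(r) = +\infty$; the statement for $g_1$ follows by the symmetric argument, invoking Proposition \ref{Properties of the boundaries b_1 and b_2}-(i) to guarantee $\underline{b}_1 > -\infty$ is a meaningful hypothesis.

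The key step is (iii), and the idea is to use the strict inequality $b_1(x) < b_2(x)$ of Proposition \ref{Properties of the boundaries b_1 and b_2}-(iii) together with the one-sided continuity of the boundaries. Arguing by contradiction, suppose $g_1(r_0) \leq g_2(r_0)$ for some $r_0 \in \R$. If both are finite, right-continuity of $b_1$ and left-continuity of $b_2$ guarantee that the infimum in the definition of $g_1(r_0)$ and the supremum in that of $g_2(r_0)$ are attained; for any $x \in [g_1(r_0), g_2(r_0)]$ (in particular when $g_1(r_0)=g_2(r_0)$, the point itself), monotonicity then yields $b_1(x) \geq r_0 \geq b_2(x)$, contradicting $b_1(x) < b_2(x)$. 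The remaining cases are those in which one of $g_1(r_0), g_2(r_0)$ is infinite: the only potentially delicate situation is when $g_1(r_0) = g_2(r_0) \in \{-\infty, +\infty\}$, and this can be excluded via (iv) together with Proposition \ref{Properties of the boundaries b_1 and b_2}-(i)—e.g., $g_1(r_0) = -\infty$ requires $b_1 \equiv +\infty$ on a tail, which is ruled out. This case analysis is the only real obstacle and is essentially combinatorial; no new probabilistic input beyond what has already been established is required.
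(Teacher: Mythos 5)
Your treatment of items (i), (ii), and (iv) is correct and in the same spirit as the paper; in fact, for (iv) your direct argument ($r\geq\bar b_2\Rightarrow\{x:b_2(x)\leq r\}=\R\Rightarrow g_2(r)=+\infty$) is cleaner than the paper's contradiction argument. For (i), note that the appeal to one-sided continuity of $b_1,b_2$ is unnecessary: since $\{x:b_1(x)<r\}$ and $\{x:b_1(x)\geq r\}$ partition $\R$ into a lower and an upper interval (monotonicity alone), the supremum of the former automatically equals the infimum of the latter, whether or not the boundary point lies in one set or the other. The paper's proof of (i) indeed just says "by definition."

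For (iii) you take a genuinely different route from the paper. The paper invokes Lemma \ref{monVr} (monotonicity of $V_r(\cdot,r)$) together with the continuity of $V_r(\cdot,r)$: the level sets $\{x:V_r(x,r)>-K\}$ and $\{x:V_r(x,r)<K\}$ are then two open intervals whose union is $\R$ (since $V_r$ takes values in $[-K,K]$), hence they overlap, which is exactly $g_2(r)<g_1(r)$. You instead run the argument at the level of the boundaries $b_1,b_2$, using Proposition \ref{Properties of the boundaries b_1 and b_2}-(iii) and one-sided continuity to get a contradiction. Your finite case is correct: when both $g_1(r_0),g_2(r_0)\in\R$, the one-sided continuity gives $b_1(g_1(r_0))\geq r_0$ and $b_2(g_2(r_0))\leq r_0$, and monotonicity pushes this to a point where $b_1\geq r_0\geq b_2$, contradicting $b_1<b_2$.

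However, your dismissal of the infinite case is wrong. You claim that $g_1(r_0)=-\infty$ "requires $b_1\equiv+\infty$ on a tail, which is ruled out." This is not what $g_1(r_0)=-\infty$ means: by \eqref{defintion of the functions g_1 and g_2} with $\inf\R=-\infty$, it means $\{x:b_1(x)\geq r_0\}=\R$, i.e.\ $b_1(x)\geq r_0$ for \emph{every} $x$, which is equivalent to $r_0\leq\underline{b}_1$ and has nothing to do with $b_1$ taking the value $+\infty$ (in fact $b_1$ is $\R\cup\{-\infty\}$-valued). Symmetrically, $g_2(r_0)=+\infty$ holds precisely when $r_0\geq\bar b_2$. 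These situations are not excluded by Proposition \ref{Properties of the boundaries b_1 and b_2}-(i); indeed item (iv) of the proposition you are proving explicitly records them. So the degenerate case $g_1(r_0)=g_2(r_0)=\pm\infty$ genuinely occurs whenever $r_0\leq\underline{b}_1>-\infty$ or $r_0\geq\bar b_2<\infty$, and there the strict inequality (iii) fails (a point which, to be fair, the paper's own terse proof also passes over, since monotonicity and continuity of $V_r(\cdot,r)$ alone do not preclude $V_r(\cdot,r)\equiv\pm K$). The honest statement is that (iii) holds for $r\in(\underline{b}_1,\bar b_2)$ — which follows from your finite-case argument once you observe that $r>\underline{b}_1\Rightarrow g_1(r)>-\infty$ and $r<\bar b_2\Rightarrow g_2(r)<+\infty$, so the remaining infinities are the harmless ones $g_1(r)=+\infty$ or $g_2(r)=-\infty$. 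You should replace the incorrect exclusion argument with this observation rather than claim the degenerate case cannot occur.
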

\begin{proof}
Claim (i) follows by definition, while (ii) is due to Proposition \ref{Properties of the boundaries b_1 and b_2}-(ii). 

Item (iii) is due to Lemma \ref{monVr} and to the continuity of $V_r(\cdot,r)$ for any $r \in \mathbb{R}$.

To show (iv), assume $\bar{b}_2<\infty$ and suppose, by contradiction, that $\lim_{r \to \infty}g_2(r)=\bar{g}<\infty$. Then, one has $b_2(x)=\infty$ for all $x \in (\bar{g},\infty)$, and this clearly contradicts $\bar{b}_2<\infty$. The statement relative to $g_1$ can be proved analogously.
\end{proof}


\section{The Structure of the Value Function}
\label{sec:valuecharact}

In the previous section we have derived a representation of the derivative $V_r$ of the value function defined in \eqref{definition of V}, and we have shown how the state space can be split in three regions, separated by nondecreasing curves. In this section, we exploit these results and we determine the structure of the value function $V$. 

For any given and fixed $r\in\mathbb{R}$, denote by $\mathcal{L}^r$ the infinitesimal generator associated to the uncontrolled process $X^{x,r,0}$. Acting on $u \in C^2(\mathbb{R};\R)$ it yields
$$\big(\mathcal{L}^r u\big)(x):=\frac{\eta^2}{2}u^{\prime \prime}(x) + \theta (\mu - br -x)u^{\prime}(x), \quad x \in \mathbb{R}.$$ 
Recall that $\bar{\mu}(r)=\mu - b r$. For frequent future use, it is worth noticing that any solution to the $r$-parametrized family of second-order ordinary differential equations (ODEs)
$$\big(\mathcal{L}^r\alpha(\cdot,r)\big)(x) - \rho\alpha(x,r) =0, \quad x \in \mathbb{R},$$
can be written as 
$$\alpha(x,r)=A(r)\psi(x-\bar{\mu}(r)) + B(r)\varphi(x-\bar{\mu}(r)), \quad x \in \mathbb{R}.$$
Here, the strictly positive functions $\psi$ and $\varphi$ are strictly increasing and decreasing fundamental solutions to the ODE
\begin{equation}
\label{ODEpsiphi}
\frac{\eta^2}{2}\zeta^{\prime \prime}(x)  - \theta x\zeta^{\prime}(x) - \rho \zeta(x) =0, \quad x \in \mathbb{R}.
\end{equation}
The functions $\psi$ and $\varphi$ are given by (see page 280 in \cite{JYC}, among others)
\begin{equation}
\label{eq:psi}
\psi(x)=e^{\frac{\theta x^2}{2\eta^2}}D_{-\frac{\rho}{\theta}}\bigg(-\frac{x}{\eta}\sqrt{2\theta}\bigg) \quad \text{and} \quad \varphi(x)=e^{\frac{\theta x^2}{2\eta^2}}D_{-\frac{\rho}{\theta}}\bigg(\frac{x}{\eta}\sqrt{2\theta}\bigg),
\end{equation}
where 
\begin{equation}
\label{eq35}
D_\beta(x):=\frac{e^{-\frac{x^2}{4}}}{\Gamma(-\beta)}\int_{0}^{\infty}t^{-\beta-1}e^{-\frac{t^2}{2}-xt}dt,\quad \beta<0,
\end{equation} 
is the Cylinder function of order $\beta$ and $\Gamma(\,\cdot\,)$ is the Euler's Gamma function (see, e.g., Chapter VIII in \cite{bateman}). Moreover, $\psi$ and $\varphi$ are strictly convex.

By the dynamic programming principle, we expect that $V$ identifies with a suitable solution to the following variational inequality
\begin{equation}
\label{HJB system}
\max\bigg\{-v_r(x,r) - K,\  v_r(x,r) - K, 
\ [(\rho-\mathcal{L}^r)v(\cdot,r)](x) - f(x,r) \bigg\}=0, \ \ \ (x,r) \in \mathbb{R}^2.
\end{equation}

By assuming that an optimal control exists, the latter can be derived by noticing that in the optimal control problem \eqref{definition of V} only three actions are possible at initial time (and, hence, at any time given the underlying Markovian framework): (i) do not intervene for a small amount of time, and then continue optimally; (ii) immediately adjust the level of $R$ via a lump sum decrease having marginal cost $K$, and then continue optimally; (iii) immediately adjust the level of $R$ via a lump sum increase having marginal cost $K$, and then continue optimally. Then, by supposing that $V$ is smooth enough, an application of It\^o's formula and a standard limiting procedure involving the mean-value theorem leads to \eqref{HJB system} (we refer to \cite{MehriZervos} for details in a related setting).

We now show that $V$ is a viscosity solution to \eqref{HJB system}. Later, this will enable us to determine the structure of $V$ (see Theorem \ref{Theorem: Structure of the Value function} below) and then to upgrade its regularity (cf.\ Theorem \ref{prop:2ndSF}) in order to derive necessary optimality conditions for the boundaries splitting the state space (cf.\ Theorem \ref{thm:eqbdsAB}).

\begin{df}
\begin{itemize}\hspace{10cm}
\item[(i)] A  function $v\in C^0(\mathbb{R}^2; \mathbb{R})$ is called a {\rm{viscosity subsolution}} to \eqref{HJB system} if, for every $(x,r) \in \mathbb{R}^2$ and every $\alpha \in C^{2,1}(\mathbb{R}^2;\mathbb{R})$ such that $v-\alpha$ attains a local maximum at $(x,r)$, it holds
$$\max\bigg\{ -\alpha_r(x,r) - K,\  \alpha_r(x,r)- K  ,\ \rho\alpha(x,r) -[\mathcal{L}^r\alpha(\cdot,r)](x) - f(x,r) \bigg\} \leq 0.$$

\item[(ii)] A function $v\in C^0(\mathbb{R}^2; \mathbb{R})$ is called a {\rm{viscosity supersolution}} to \eqref{HJB system} if, for every $(x,r) \in \mathbb{R}^2$ and every $\alpha \in C^{2,1}(\mathbb{R}^2;\mathbb{R})$ such that $v-\alpha$ attains a local minimum at $(x,r)$, it holds
$$\max\bigg\{ -\alpha_r(x,r) - K,\  \alpha_r(x,r)- K,\ \rho\alpha(x,r) - [\mathcal{L}^r\alpha(\cdot,r)](x) - f(x,r) \bigg\} \geq 0.$$

\item[(iii)]
A function $v\in C^0(\mathbb{R}^2; \mathbb{R})$ is called a {\rm{viscosity solution}} to \eqref{HJB system} if it is both a viscosity subsolution and supersolution.
\end{itemize}
\end{df}

Following the arguments developed in Theorem 5.1 in Section VIII.5 of \cite{FlemingSoner2006}, one can show the following result.

\begin{proposition}
\label{prop:Vvisc}
The value function $V$ is a viscosity solution to \eqref{HJB system}.
\end{proposition}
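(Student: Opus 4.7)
The plan is to derive both the viscosity subsolution and supersolution properties from the Dynamic Programming Principle (DPP) via It\^o's formula, as in Theorem~5.1 of Section~VIII.5 of \cite{FlemingSoner2006}. As a preliminary step, I would establish the DPP: for every $(x,r)\in\R^2$ and every $\mathbb{F}$-stopping time $\tau$,
\[
V(x,r)=\inf_{\xi\in\mathcal{A}} \E\left[\int_0^{\tau}e^{-\rho t}f(X_t^{x,r,\xi},R_t^{r,\xi})\,\d t+\int_{[0,\tau]}e^{-\rho t}K\,\d|\xi|_t+e^{-\rho \tau}V(X_{\tau}^{x,r,\xi},R_{\tau}^{r,\xi})\right],
\]
which follows by a standard measurable-selection argument exploiting the continuity of $V$ (Proposition~\ref{prop:Vprelim}) and the strong Markov property of $(X^{x,r,\xi},R^{r,\xi})$.

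For the subsolution property, I would fix $\alpha\in C^{2,1}(\R^2;\R)$ such that $V-\alpha$ attains a local maximum at $(x,r)$, assuming without loss of generality $V(x,r)=\alpha(x,r)$. The gradient bounds follow by applying the DPP with $\tau=0$ and the lump-sum controls $(\xi^+_t,\xi^-_t)=(\eps,0)$, respectively $(0,\eps)$, for all $t\geq 0$, which yield $V(x,r)\leq V(x,r\pm\eps)+K\eps$; combining with $V\leq \alpha$ near $(x,r)$, dividing by $\eps>0$ and letting $\eps\downarrow 0$ produces $\alpha_r(x,r)\leq K$ and $-\alpha_r(x,r)\leq K$. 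For the PDE-like inequality, I would choose $\xi\equiv 0$ and $\tau=h>0$ small in the DPP, apply It\^o's formula to $t\mapsto e^{-\rho t}\alpha(X_t^{x,r},r)$, divide by $h$ and let $h\downarrow 0$; continuity of $f$ and smoothness of $\alpha$ then yield $\rho\alpha(x,r)-[\mathcal{L}^r\alpha(\cdot,r)](x)-f(x,r)\leq 0$.

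For the supersolution property, I would argue by contradiction. Let $\alpha\in C^{2,1}(\R^2;\R)$ be such that $V-\alpha$ attains a local minimum at $(x,r)$ with $V(x,r)=\alpha(x,r)$, and suppose that, on some closed ball $B\subset\R^2$ centered at $(x,r)$, all three terms of the operator in \eqref{HJB system} are $\leq -\delta$ for some $\delta>0$; in particular $|\alpha_r|\leq K-\delta$ and $\rho\alpha-\mathcal{L}^r\alpha-f\leq -\delta$ on $B$. For any admissible $\xi\in\mathcal{A}$, denote by $\tau^\xi$ the first exit time of $(X^{x,r,\xi},R^{r,\xi})$ from $B$, and apply It\^o's formula to $e^{-\rho t}\alpha(X_t^{x,r,\xi},R_t^{r,\xi})$ on $[0,\tau^\xi\wedge h]$, splitting the $\d R$-part into its absolutely continuous piece (driven by $\d\xi^c$) and its jump piece $\Delta\xi$; the bound $|\alpha_r|\leq K-\delta$ on $B$ and the mean-value theorem give
\[
\int_{[0,\tau^\xi\wedge h]}\! e^{-\rho t}\alpha_r\,\d\xi^c_t+\sum_{0\leq t\leq\tau^\xi\wedge h}\! e^{-\rho t}\bigl(\alpha(X_t,R_t)-\alpha(X_t,R_{t-})\bigr)\geq -(K-\delta)\!\int_{[0,\tau^\xi\wedge h]}\! e^{-\rho t}\d|\xi|_t.
\]
Taking expectations, using $V\geq\alpha$ on $B$ and the strict PDE inequality on $B$, one obtains
\[
\E\!\left[\int_0^{\tau^\xi\wedge h}\! e^{-\rho t}f\,\d t+\int_{[0,\tau^\xi\wedge h]}\! e^{-\rho t}K\,\d|\xi|_t+e^{-\rho(\tau^\xi\wedge h)}V(X_{\tau^\xi\wedge h}^{x,r,\xi},R_{\tau^\xi\wedge h}^{r,\xi})\right]\geq \alpha(x,r)+\delta\,\E[\tau^\xi\wedge h],
\]
and, as this is uniform in $\xi$, passing to the infimum over $\xi\in\mathcal{A}$ and choosing $h$ small contradicts the DPP and the identity $V(x,r)=\alpha(x,r)$.

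The main obstacles will be the careful use of It\^o's formula in the presence of BV-controlled dynamics with jumps, in particular the separate treatment of the continuous and jump parts of $\xi$ in the $\alpha$-expansion, together with the minor technicality that $\xi$ may carry a mass at $t=0$ pushing the state immediately outside $B$; this last point is handled by choosing $B$ small enough so that any such lump-sum move costs at least as much as the bound provided by the gradient constraint, reducing the analysis to jump-free controls near $t=0$.
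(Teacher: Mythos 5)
Your plan---establishing the Dynamic Programming Principle and deriving both viscosity properties via It\^o's formula, as in Theorem 5.1 of Section VIII.5 of \cite{FlemingSoner2006}---is exactly what the paper invokes, and the subsolution half of your argument is sound. The supersolution half, however, has a gap: the displayed inequality
\[
\E\bigg[\int_0^{\tau^\xi\wedge h} e^{-\rho t}f\,\d t+\int_{[0,\tau^\xi\wedge h]} e^{-\rho t}K\,\d|\xi|_t+e^{-\rho(\tau^\xi\wedge h)}V\bigg]\geq \alpha(x,r)+\delta\,\E[\tau^\xi\wedge h]
\]
is not what It\^o's formula actually yields, and, as stated, it is too weak to produce a contradiction. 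Carrying the computation through, the strict PDE inequality on $B$ contributes $\delta\,\E\big[\int_0^{\tau^\xi\wedge h}e^{-\rho t}\,\d t\big]$, while the gradient bound $|\alpha_r|\leq K-\delta$ on $B$, once the $(K-\delta)$ piece is moved to the left to reconstitute the full cost $K\,\d|\xi|_t$, contributes $\delta\,\E\big[\int_{[0,\tau^\xi\wedge h]}e^{-\rho t}\,\d|\xi|_t\big]$. The correct right-hand side is therefore
\[
\alpha(x,r)+\delta\,\E\bigg[\int_0^{\tau^\xi\wedge h} e^{-\rho t}\,\d t+\int_{[0,\tau^\xi\wedge h]} e^{-\rho t}\,\d|\xi|_t\bigg],
\]
and retaining the second term is essential. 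Indeed, a control $\xi$ that immediately pushes $R$ out of $B$ by a lump-sum jump at $t=0$ makes $\tau^\xi=0$ a.s., hence $\E[\tau^\xi\wedge h]=0$, and your bound then degenerates to $V(x,r)\geq\alpha(x,r)=V(x,r)$, which is no contradiction at all. With the $\d|\xi|$ term in place, one concludes by noting that, for $h$ small, the bracketed expression is bounded below uniformly in $\xi$: on an event of probability close to one the diffusion $X$ cannot leave the $x$-section of $B$ before time $h$, so on that event either $\tau^\xi\geq h$ (and the time integral contributes a fixed amount) or the exit is caused by $\xi$ moving $R$ across the $r$-section of $B$ (so $|\xi|_{\tau^\xi}$ is at least the $r$-half-width of $B$, and the second term contributes). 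Your closing remark about ``choosing $B$ small enough so that lump-sum moves are costly'' gestures at this, but it does not repair the displayed inequality---the $\d|\xi|$ term has to be carried, not argued away by shrinking $B$.
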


\begin{remark}
\label{rem:VS}
Clearly, due to Lemma 5.4 in Chapter 4 of \cite{YongZhou1999}, a viscosity solution  which lies in the class $W^{2,\infty}_{loc}(\R^2;\R)$ (as our value function does; cf.\ Proposition \ref{prop:Vprelim}-(iii)) is also a \emph{strong solution} (in the sense, e.g., of \cite{CCKS}; see the same reference also for relations between these notions of solutions); i.e., it solves \eqref{HJB system} in the pointwise sense almost everywhere. 
This observation might be used to prove -- in a more economic way, but at the price of invoking another concept of solution and the results of \cite{CCKS} --  some properties of $V$ (see also Remark \ref{rem:viscstrong} below). Nonetheless, in order to keep the paper self-contained as much as possible, we will not make use of the concept of strong solution.

Our choice of using the concept of viscosity solution is motivated by the fact that we will deal afterward (see Proposition \ref{Lemma: V is viscosity solution inside the continuation region} and Theorem \ref{Second-order smooth-fit} below) with the variational inequality \eqref{HJB system} on sets of null Lebesgue measure (regular lines). Indeed, the concept of viscosity solution still provides information on what happens on those sets, as the viscosity property holds \emph{for all} (and not merely for a.e.) points of the state space $\R^2$.
\end{remark}

For future frequent use, notice that the function
\begin{equation}
\label{eq:Vhat}
\widehat{V}(x,r) := \mathcal{J}(x,r;0)=\E\bigg[\int_0^{\infty} e^{-\rho t} f(X^{x,r}_t, r)\, \d t\bigg], \quad (x,r) \in \R^2,
\end{equation}
is finite and that, for any $r\in \R$, by Feynman-Kac's theorem it identifies with a classical particular solution to the inhomogeneous linear ODE
\begin{equation}
[(\mathcal{L}^r-\rho)q(\cdot,r)](x) + f(x,r) =0, \quad x \in \mathbb{R}.
\end{equation}
Moreover, $\widehat{V}$ is continuously differentiable with respect to $r$, given the assumed regularity of $f_x$ and $f_r$.

Recall the regions $\mathcal{C}$, $\mathcal{I}$ and $\mathcal{D}$ from \eqref{definiton of the three regions}, and that $V_r=-K$ on $\mathcal{I}$, while $V_r=K$ on $\mathcal{D}$. The next proposition provides the structure of $V$ inside $\mathcal{C}$.

\begin{proposition}
\label{Lemma: V is viscosity solution inside the continuation region} 
Recall \eqref{barb12} and let $r_o \in (\underline{b}_1,\bar{b}_2)$.
\begin{itemize}
\item[(i)] The function $V(\cdot,r_o)$ is a viscosity solution to
\begin{equation}
\label{eq: viscosity inside th continuation region}
\rho\alpha(x,r_o)-[\mathcal{L}^{r_o}\alpha(\cdot,r_o)](x) - f(x,r_o) =0, \quad x \in (g_2(r_o),g_1(r_o)).
\end{equation}

\item[(ii)] $V(\cdot,r_o) \in C^{3,Lip}_{loc}((g_2(r_o),g_1(r_o)); \mathbb{R})$.

\item[(iii)] There exist constants $A(r_o)$ and $B(r_o)$ such that for all $x \in (g_2(r_o),g_1(r_o))$
$$V(x,r_o)=A(r_o)\psi(x-\bar{\mu}(r_o)) + B(r_o) \varphi(x-\bar{\mu}(r_o)) + \widehat{V}(x,r_o),$$
where the functions $\psi$ and $\varphi$ are the fundamental strictly increasing and decreasing solutions to \eqref{ODEpsiphi} and $\widehat{V}$ is as in \eqref{eq:Vhat}.
\end{itemize}
\end{proposition}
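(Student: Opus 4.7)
My plan is to prove (i) via a viscosity-theoretic slicing argument that exploits the strict inequality $V_r \in (-K, K)$ inside the open continuation region $\mathcal{C}$, and then to obtain (ii) and (iii) by recognising that, on the slice $r = r_o$, the problem collapses to a second-order linear ODE whose fundamental system $\{\psi, \varphi\}$ is explicitly available through \eqref{eq:psi}.

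For (i), fix $x_o \in (g_2(r_o), g_1(r_o))$. By Proposition \ref{prop1:g1g2}-(i) the point $(x_o, r_o)$ lies in $\mathcal{C}$, so $V_r(x_o, r_o) \in (-K, K)$ strictly. Given a $C^2$ test function $\tilde{\alpha}$ with $V(\cdot, r_o) - \tilde{\alpha}$ attaining a (WLOG strict) local maximum at $x_o$, I lift it to a two-dimensional test function $\alpha(x, r) := \tilde{\alpha}(x) + V_r(x_o, r_o)(r - r_o) + \chi(x, r)$, where $\chi \in C^2(\mathbb{R}^2)$ is a correction term engineered --- using the $C^{1,{Lip}}_{loc}$ regularity of $V$ from Proposition \ref{prop:Vprelim}-(iii) --- so that $V - \alpha$ has a local maximum at $(x_o, r_o)$ while $\chi(x_o, r_o) = \chi_x(x_o, r_o) = \chi_{xx}(x_o, r_o) = \chi_r(x_o, r_o) = 0$. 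Applying the viscosity subsolution property of $V$ (Proposition \ref{prop:Vvisc}) at $(x_o, r_o)$, the strict inequality $\alpha_r(x_o, r_o) = V_r(x_o, r_o) \in (-K, K)$ kills the two gradient-constraint alternatives in \eqref{HJB system}, and the vanishing of the relevant derivatives of $\chi$ reduces the surviving PDE inequality to $\rho \tilde{\alpha}(x_o) - \mathcal{L}^{r_o}\tilde{\alpha}(x_o) - f(x_o, r_o) \leq 0$, i.e.\ the one-dimensional subsolution inequality at $x_o$. The supersolution direction is analogous, with the convexity of $V(x, \cdot)$ from Proposition \ref{prop:Vprelim}-(iii) helping to construct the lower-touching correction.

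For (ii) and (iii), set $u(x) := V(x, r_o) - \widehat{V}(x, r_o)$ on $I := (g_2(r_o), g_1(r_o))$. By Feynman--Kac, $\widehat{V}(\cdot, r_o)$ is a classical solution of $(\rho - \mathcal{L}^{r_o})q = f(\cdot, r_o)$ that inherits $C^{3,{Lip}}_{loc}$ smoothness from $f \in C^{1,{Lip}}_{loc}$ (cf.\ Remark \ref{rem:assf}-(i)). Combined with (i), this shows that $u$ is a viscosity solution of the homogeneous linear ODE $\rho u - \mathcal{L}^{r_o} u = 0$ on $I$. Since $u \in W^{2,\infty}_{loc}(I;\mathbb{R})$ by Proposition \ref{prop:Vprelim}, the ODE is satisfied a.e., and a direct bootstrap --- it expresses $u''$ as a smooth linear combination of $u$ and $u'$ --- upgrades $u$ to a classical $C^\infty(I)$ solution. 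Standard second-order linear ODE theory then yields constants $A(r_o), B(r_o) \in \mathbb{R}$ with $u = A(r_o)\psi(\cdot - \bar{\mu}(r_o)) + B(r_o)\varphi(\cdot - \bar{\mu}(r_o))$ on the connected interval $I$, giving (iii); combined with the smoothness of $\psi, \varphi$ and $\widehat{V}(\cdot, r_o)$, this also gives (ii). The principal technical point is the construction of $\chi$ in Part (i): the joint Lipschitz regularity of $V_r$ produces mixed $|x - x_o||r - r_o|$-type terms that must be absorbed without perturbing $\alpha_{xx}(x_o, r_o)$, and this is the single step that goes beyond routine viscosity/ODE manipulations (cf.\ the constructions in Section VIII.5 of \cite{FlemingSoner2006}).
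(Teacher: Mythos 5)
Your proposal is essentially correct, and at the level of ideas it parallels the paper, but there are two genuine differences in how the key steps are handled.

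For part (i), both you and the paper must lift a one-dimensional test function $\tilde\alpha$ touching $V(\cdot,r_o)$ from above at $x_o$ to a two-dimensional test function for the full HJB. You do this by writing $\alpha(x,r)=\tilde\alpha(x)+V_r(x_o,r_o)(r-r_o)+\chi(x,r)$ and asserting that a $C^2$ correction $\chi$ can be ``engineered'' with $\chi=\chi_x=\chi_{xx}=\chi_r=0$ at $(x_o,r_o)$ while forcing a local maximum of $V-\alpha$ there. This is exactly the nontrivial step, and your sketch under-sells the difficulty: because $V_r$ is only locally Lipschitz, the quantity $V(x,r)-V(x,r_o)-V_r(x_o,r_o)(r-r_o)$ is only $o(|r-r_o|+|x-x_o|^2)$, with a genuine mixed $|x-x_o|\,|r-r_o|$ contribution; so $\chi$ must dominate a term which vanishes strictly sublinearly in $r$ and strictly subquadratically in $x$, yet have vanishing $\chi_r$, $\chi_{xx}$ at the point. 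This can be done, but the construction is precisely the content of the mollification argument behind the standard ``test function from a semijet'' lemma, and your sketch essentially asks the reader to reprove it. The paper sidesteps this by first verifying directly that $(V_r(x_o,r_o),\tilde\alpha'(x_o),\tilde\alpha''(x_o))$ belongs to the parabolic-type superdifferential $D_x^{2,1,+}V(x_o,r_o)$ (its equations \eqref{caz} and \eqref{eq: viscosity in conti eq3}) and then invoking Lemma~5.4 in Chapter~4 of \cite{YongZhou1999} to produce the two-dimensional test function with the prescribed derivatives. That route is cleaner because the hard construction is delegated to a quoted lemma, and because it cleanly uses the structure $|r-r_o|+|x-x_o|^2$ in the denominator of the semijet inequality, which is exactly what absorbs the mixed term.

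For parts (ii)--(iii), your argument (identify $u=V(\cdot,r_o)-\widehat V(\cdot,r_o)$, note it is a viscosity solution of the homogeneous ODE, use $V\in W^{2,\infty}_{loc}$ to upgrade to an a.e.\ solution and then bootstrap to a classical one) is valid and is in fact essentially the alternative route the paper itself describes in Remark~\ref{rem:viscstrong}. The paper's main proof instead sets up a Dirichlet boundary-value problem on a subinterval $(a,b)$ with $(a,r_o),(b,r_o)\in\mathcal{C}$, produces a classical $C^{3,\mathrm{Lip}}_{loc}$ solution, and identifies it with $V(\cdot,r_o)$ by uniqueness of viscosity solutions to that linear problem. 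The two approaches buy slightly different things: yours is shorter and more elementary given that the a.e.-solution fact is available, while the paper's BVP route stays entirely inside viscosity theory and does not invoke the notion of strong solution. Both are correct; just be explicit in part~(i) about what the construction of $\chi$ actually requires, or cite the relevant lemma rather than gesturing at an ``engineered'' correction term.
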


\begin{proof}
We prove each item separately.
\vspace{0.25cm}

\emph{Proof of (i)}. We  show the subsolution property; that is, we prove that for any $x_o \in (g_2(r_o),g_1(r_o))$ and $\alpha \in C^2((g_2(r_o),g_1(r_o));\mathbb{R})$ such that $V(\cdot,r_o)-\alpha$ attains a local maximum at $x_o$ it holds that 
$$\rho\alpha(x_o,r_o) - [\mathcal{L}^{r_o}\alpha(\cdot,r_o)](x_o) - f(x_o,r_o) \leq 0.$$
First of all, we claim that
$$(V_r(x_o,r_o),\alpha^{\prime}(x_o), \alpha^{\prime \prime}(x_o)) \in D_x^{2,1,+}V(x_o,r_o),$$
where $D^{2,1,+}V(x_o,r_o)$ is the superdifferential of $V$ at $(x_o,r_o)$ of first order with respect to $r$ and of second order with respect to $x$ (see Section 5 in Chapter 4 of \cite{YongZhou1999}). This means that we have to show that
\begin{equation}
\label{eq: viscosity in conti eq0}
\limsup_{(x,r) \to (x_o,r_o)} \frac{V(x,r)-V(x_o,r_o)-V_r(x_o,r_o)(r-r_o)-\alpha^{\prime}(x_o)(x-x_o)-\frac{1}{2} \alpha^{\prime \prime}(x_o)(x-x_o)^2}{|r-r_o|+|x-x_o|^2} \leq 0.
\end{equation}

In order to prove \eqref{eq: viscosity in conti eq0}, notice first that $V(x_o,\cdot)$ is continuously differentiable, and therefore
\begin{equation}\label{caz}
\lim_{r \to r_o} \frac{V(x,r)-V(x,r_o)- V_r(x_o,r_o) (r-r_o)}{|r-r_o|} = 0 \ \ \ \ \mbox{uniformly in}  \ x\in(x_o-1,x_o+1).
 \end{equation} 

Using now Lemma 5.4 in \cite{YongZhou1999}, we have that
$$(\alpha^{\prime}(x_o), \alpha^{\prime \prime}(x_o)) \in D_x^{2,+}V(x_o,r_o),$$ where $D_x^{2,+}V(x_o,r_o)$ denotes the superdifferential of $V(\cdot,r_o)$ at $x_o$  of second order (with respect to $x$); i.e.
\begin{equation}
\label{eq: viscosity in conti eq3}
\limsup_{x \rightarrow x_o} \frac{V(x,r_o)-V(x_o,r_o)-\alpha^{\prime}(x_o)(x-x_o)-\frac{1}{2} \alpha^{\prime \prime}(x_o)(x-x_o)^2}{|x-x_o|^2} \leq 0.
\end{equation}
Adding and substracting $V(x,r_o)$ in the numerator of \eqref{eq: viscosity in conti eq0}, and using \eqref{caz} and \eqref{eq: viscosity in conti eq3}, we obtain \eqref{eq: viscosity in conti eq0}.

Using again Lemma 5.4 in \cite{YongZhou1999}, we can then construct a function $\widehat{\alpha}\in C^{2,1}(\mathbb{R}^2;\mathbb{R})$ such that $V-\widehat{\alpha}$ attains a local maximum in $(x_o,r_o)$ and
\begin{equation}
\label{eq: viscosity in conti eq5}
\left(\widehat{\alpha}_r(x_o,r_o),\widehat{\alpha}_x(x_o,r_o),\widehat{\alpha}_{xx}(x_o,r_o)\right) = 
(V_r(x_o,r_o),\alpha^{\prime}(x_o), \alpha^{\prime \prime}(x_o)).
\end{equation} 
Since $(x_o,r_o)\in \mathcal{C}$ we know that $-K < V_r(x_o,r_o) < K$, and because $V$ is a viscosity solution to \eqref{HJB system}, we obtain by \eqref{eq: viscosity in conti eq5} that
$$\rho\alpha(x_o,r_o)-[\mathcal{L}^{r_o}\alpha(\cdot,r_o)](x_o) - f(x_o,r_o) \leq 0,$$
thus completing the proof of the subsolution property. The supersolution property can be shown in an analogous way and the proof is therefore omitted. 
\vspace{0.25cm}

\emph{Proof of (ii)}. Let  $a,b\in\R$ be such that $(a,r_o),(b,r_o) \in \mathcal{C}$ and $a < b$. Introduce the Dirichlet boundary value problem
\begin{equation}
\label{auxCauchy}
\begin{cases}
(\mathcal{L}^{r_o}-\rho)q(x) + f(x,r_o) =0, \quad x \in (a,b), \\
q(a,r_o)=V(a,r_o), \quad q(b,r_o)=V(b,r_o).
\end{cases}
\end{equation}
Since $f(\cdot,r_o)\in C^{1,Lip}_{loc}((g_2(r_o),g_1(r_o));\R)$, by assumption, and $V(\cdot,r_o) \in C([a,b];\R)$, by classical results problem \eqref{auxCauchy} admits a unique classical solution $\hat{q} \in C^0([a,b];\mathbb{R}) \cap C^{3,Lip}_{loc}((a,b);\mathbb{R})$. The latter is also a viscosity solution, and by (i) above and standard uniqueness results for viscosity solutions of linear equations it must coincide with $V(\cdot,r_o)$. Hence, we have that $V(\cdot,r_o) \in C^{3,Lip}_{loc}((g_2(r_o),g_1(r_o));\mathbb{R})$ and $V(\cdot,r_o)$ is a classical solution to
$$[(\mathcal{L}^{r_o}-\rho) V(\cdot,r_o)](x) + f(x,r_o) =0, \quad x \in (g_2(r_o),g_1(r_o)),$$
given the arbitrariness of $(a,b)$ and the fact that $\mathcal{C}$ is open. 
\vspace{0.25cm}

\emph{Proof of (iii)}.  Since any solution to the homogeneous linear ODE $(\mathcal{L}^{r_o}-\rho)q=0$ is given by a linear combination of its increasing fundamental solution $\psi$ and decreasing fundamental solution $\varphi$, we conclude by (ii) and the superposition principle. 
\end{proof}

\begin{remark}\label{rem:viscstrong}
The proof of Proposition \ref{Lemma: V is viscosity solution inside the continuation region} may be considerably simplified by making use of Remark \ref{rem:VS}. Indeed, since $V$ is a $W^{2,\infty}_{loc}(\R^2;\R)$ solution to \eqref{HJB system} in the a.e.\ sense, in the open set $\mathcal{C}$ one has
$$V_{xx}(x,r)=\frac{2}{\eta^2} \big(\rho V(x,r)-\theta (\mu - br -x)V_x(x,r)-f(x,r)\big), \ \ \ \mbox{for a.e.} \ (x,r)\in\mathcal{C}.  
$$
Hence, because $V\in W^{2,\infty}_{loc}(\R^2;\R)=C^1(\R^2;\R)$, we deduce from the latter that $V_{xx}$ is continuous on $\mathcal{C}$. This implies that $V(\cdot,r_o)$ is also a classical solution to \eqref{eq: viscosity inside th continuation region} for all $r_o\in (\underline{b}_1,\bar{b}_2)$, and the other claims of the proposition follow.
\end{remark}
With the previous results at hand, we are now able to provide the structure of the value function $V$.

\begin{theorem}
\label{Theorem: Structure of the Value function}
Define the sets
\begin{equation}
\mathcal{O}_1 := \{ x \in \mathbb{R}:\, b_1(x) > -\infty \}\quad
\mathcal{O}_2 := \{ x \in \mathbb{R}:\, b_2(x) < \infty \}.
\end{equation}
There exist functions
$$A,B \in  W_{loc}^{2,\infty}((\underline{b}_1,\bar{b}_2);\mathbb{R})={C}^{1,Lip}_{loc}((\underline{b}_1,\bar{b}_2);\mathbb{R}), \quad z_{1,2}: \mathcal{O}_{1,2} \to \mathbb{R}$$
such that the value function defined in \eqref{definition of V} can be written as

\begin{equation}
\label{Structure of V}
V(x,r)=
\begin{cases}
A(r)\psi(x-\bar{\mu}(r))+B(r)\varphi(x-\bar{\mu}(r))+\widehat{V}(x,r) & \text{on } \bar{\mathcal{C}},\\
z_1(x)-K r & \text{on } \mathcal{I}, \\
z_2(x)+K r & \text{on } \mathcal{D},
\end{cases}
\end{equation}
where  $\bar{\mathcal{C}}$ denotes the closure of $\mathcal{C}$,
\begin{equation}
z_1(x):=V(x,b_1(x))+K b_1(x), \quad x \in \mathcal{O}_1
\end{equation}
and 
\begin{equation}
z_2(x):=V(x,b_2(x))-K b_2(x), \quad x \in \mathcal{O}_2.
\end{equation}
\end{theorem}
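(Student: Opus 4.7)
The argument splits naturally into three parts, one for each region in \eqref{Structure of V}, and the delicate step is deducing the regularity of the coefficients $A,B$ on $(\underline{b}_1,\bar{b}_2)$.

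\textbf{Structure on $\mathcal{I}$ and $\mathcal{D}$.} Fix $x \in \mathcal{O}_1$, so that $b_1(x) \in \mathbb{R}$. By the representation of $\mathcal{I}$ derived after \eqref{definiton of the three regions}, the vertical section $\{r:(x,r) \in \mathcal{I}\}$ equals $(-\infty, b_1(x)]$; by definition of $\mathcal{I}$, we have $V_r(x,\cdot) \equiv -K$ on this section (note $V_r$ is continuous by Proposition \ref{prop:Vprelim}). Integrating from $r$ to $b_1(x)$ yields $V(x,r) = V(x,b_1(x)) + K\,b_1(x) - K r = z_1(x) - K r$, as claimed. The argument on $\mathcal{D}$ is symmetric and produces the representation $V(x,r) = z_2(x) + Kr$.

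\textbf{Pointwise structure on $\mathcal{C}$.} For each $r_o \in (\underline{b}_1,\bar{b}_2)$, Proposition \ref{Lemma: V is viscosity solution inside the continuation region}--(iii) already yields constants $A(r_o),B(r_o)$ such that the first line of \eqref{Structure of V} holds for all $x \in (g_2(r_o),g_1(r_o))$. Thus the pointwise identity is free; I only need to check that the assignment $r_o \mapsto (A(r_o),B(r_o))$ is well defined and sufficiently regular. The key tool is that, since $\psi$ and $\varphi$ are linearly independent on any open interval, any two distinct points in $(g_2(r_o),g_1(r_o))$ uniquely determine $(A(r_o),B(r_o))$ through a $2\times 2$ linear system.

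\textbf{Regularity of $A,B$ and extension to $\bar{\mathcal{C}}$.} Fix $r_* \in (\underline{b}_1,\bar{b}_2)$ and pick $x_1 < x_2$ in $(g_2(r_*),g_1(r_*))$. Since $\mathcal{C}$ is open, there is an open neighborhood $U$ of $r_*$ in $(\underline{b}_1,\bar{b}_2)$ such that $(x_i, r) \in \mathcal{C}$ for $r \in U$ and $i=1,2$. On $U$ the pair $(A(r),B(r))$ solves the linear system
\begin{equation*}
\begin{pmatrix} \psi(x_1 - \bar{\mu}(r)) & \varphi(x_1 - \bar{\mu}(r)) \\ \psi(x_2 - \bar{\mu}(r)) & \varphi(x_2 - \bar{\mu}(r)) \end{pmatrix}\! \begin{pmatrix} A(r) \\ B(r) \end{pmatrix} = \begin{pmatrix} V(x_1,r)-\widehat{V}(x_1,r) \\ V(x_2,r)-\widehat{V}(x_2,r) \end{pmatrix}\!.
\end{equation*}
The determinant $D(r) := \psi(x_1-\bar{\mu}(r))\varphi(x_2-\bar{\mu}(r)) - \psi(x_2-\bar{\mu}(r))\varphi(x_1-\bar{\mu}(r))$ is strictly positive on $U$ because $\psi$ is strictly increasing, $\varphi$ is strictly positive and strictly decreasing, and $x_1 < x_2$, so that $\psi/\varphi$ is strictly monotone. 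By Cramer's rule, $A$ and $B$ are rational functions of smooth functions of $r$ (coming from $\psi,\varphi,\bar{\mu}$) and of the maps $r \mapsto V(x_i,r)$, $r \mapsto \widehat{V}(x_i,r)$, which belong to $C^{1,\mathrm{Lip}}_{\mathrm{loc}}(U;\mathbb{R})$ thanks to Proposition \ref{prop:Vprelim} and the regularity of $\widehat V$. Hence $A,B \in C^{1,\mathrm{Lip}}_{\mathrm{loc}}(U;\mathbb{R})$; covering $(\underline{b}_1,\bar{b}_2)$ with such neighborhoods and invoking uniqueness of $(A(r),B(r))$ yields the global regularity $A,B \in W^{2,\infty}_{\mathrm{loc}}((\underline{b}_1,\bar{b}_2);\mathbb{R}) = C^{1,\mathrm{Lip}}_{\mathrm{loc}}((\underline{b}_1,\bar{b}_2);\mathbb{R})$. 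Finally, joint continuity of $V$, $\widehat V$, $\psi$, $\varphi$, $A$ and $B$ allows the representation on $\mathcal{C}$ to be extended to $\bar{\mathcal{C}}$ simply by passing to the limit along sequences $(x_n,r_n) \in \mathcal{C}$. The step I expect to be the most delicate is precisely this last regularity argument: it requires isolating $(A(r),B(r))$ from the $(x,r)$ coupling through a judiciously chosen pair of test points, and then carrying this local construction consistently across the whole interval $(\underline{b}_1,\bar{b}_2)$.
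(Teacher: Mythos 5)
Your argument follows the same route as the paper's own proof: Proposition \ref{Lemma: V is viscosity solution inside the continuation region}--(iii) gives the pointwise representation on $\mathcal{C}$, a $2\times 2$ linear system at two fixed abscissae (with non-vanishing determinant because $\psi/\varphi$ is strictly monotone) isolates $A(r),B(r)$ and transfers the $C^{1,\mathrm{Lip}}_{\mathrm{loc}}$ regularity of $V(\cdot,r)$ and $\widehat V(\cdot,r)$ in $r$ to $A,B$, and then continuity gives the extension to $\bar{\mathcal{C}}$ and the gradient constraint gives $\mathcal{I},\mathcal{D}$. One small slip: with $x_1<x_2$ your determinant $D(r)=\varphi(y_1)\varphi(y_2)\bigl(\psi(y_1)/\varphi(y_1)-\psi(y_2)/\varphi(y_2)\bigr)$ is strictly negative, not strictly positive, but only its non-vanishing is used so the conclusion stands.
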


\begin{proof}
We start by deriving the structure of $V$ within $\mathcal{C}$. Using Lemma \ref{Lemma: V is viscosity solution inside the continuation region}, we already know the existence of functions $A,B:(\underline{b}_1,\bar{b}_2) \to \mathbb{R}$ such that
\begin{equation}
\label{eq1 for V}
V(x,r)=A(r)\psi(x-\bar{\mu}(r))+B(r)\varphi(x-\bar{\mu}(r))+\widehat{V}(x,r), \quad (x,r) \in \mathcal{C}.
\end{equation}
Take now $r_o \in (\underline{b}_1,\bar{b}_2)$. Since $g_1(r) > g_2(r)$ for any $r \in \mathbb{R}$ (cf.\ Proposition \ref{prop1:g1g2}-(iii)), we can find $x$ and $\tilde{x}$, $x \neq \tilde{x}$, such that $(x,r),(\tilde{x},r) \in \mathcal{C}$ for any given $r \in (r_o-\varepsilon,r_o+\varepsilon)$, for a suitably small $\varepsilon >0$. Now, by evaluating \eqref{eq1 for V} at the points $(x,r)$ and $(\tilde{x},r)$, we obtain a linear algebraic system that we can solve with respect to $A(r)$ and $B(r)$ so to obtain
\begin{equation}
\label{eq for A}
A(r)=\frac{(V(x,r)-\widehat{V}(x,r))\varphi(\tilde{x}-\bar{\mu}(r))-(V(\tilde{x},r)-\widehat{V}(\tilde{x},r)\varphi(x-\bar{\mu}(r))}{\psi(x-\bar{\mu}(r))\varphi(\tilde{x}-\bar{\mu}(r))-\psi(\tilde{x}-\bar{\mu}(r))\varphi(x-\bar{\mu}(r))},
\end{equation}
\begin{equation}\label{eq for B}
B(r)=\frac{(V(\tilde{x},r)-\widehat{V}(\tilde{x},r)\psi(x-\bar{\mu}(r))-(V(x,r)-\widehat{V}(x,r))\psi(\tilde{x}-\bar{\mu}(r))}{\psi(x-\bar{\mu}(r))\varphi(\tilde{x}-\bar{\mu}(r))-\psi(\tilde{x}-\bar{\mu}(r))\varphi(x-\bar{\mu}(r))}.
\end{equation}
Note that the denominator does not vanish due to the strict monotonicity of $\psi$ and $\varphi$, and to the fact that $x \neq \tilde{x}$. Since $r_o$ was arbitrary and $V_r$ and $\widehat{V}_r$ are continuous with respect to $r$, we therefore obtain that $A$ and $B$ belong to $W^{2,\infty}_{loc}((\underline{b}_1,\bar{b}_2);\mathbb{R})={C}_{loc}^{1,Lip}((\underline{b}_1,\bar{b}_2);\mathbb{R})$.
The structure of $V$ in the closure of $\mathcal{C}$, denoted by $\overline{\mathcal{C}}$, is then obtained by Proposition \ref{Lemma: V is viscosity solution inside the continuation region} and by recalling that $V$ is continuous on $\mathbb{R}^2$ and that $A$, $B$, and $\widehat{V}$ are also continuous.

Given the definition of $z_1$ and $z_2$, the structure of $V$ inside the regions $\mathcal{I}$ and $\mathcal{D}$ follow by \eqref{definiton of the three regions} and the continuity of $V$.
\end{proof}

\begin{remark}
\label{rem:ABW2}
Notice that, in the case when $\underline{b}_1$ (resp.\ $\bar{b}_2$) is finite, we have from \eqref{eq for A} and \eqref{eq for B} that $A$ and $B$ actually belong to $W^{2,\infty}$ up to $\underline{b}_1$ (resp.\ $\bar{b}_2$). A system of ordinary differential equations for $A$ and $B$ will be derived in \eqref{ODEAB1} and \eqref{ODEAB2} by exploiting the second-order smooth-fit property of $V$ that we prove in the next section.
\end{remark}


\section{A Second-Order Smooth-Fit Principle}
\label{2ndorderSF}

This section is devoted to the proof of a second order smooth-fit principle for the value function $V$. Precisely, we are going to show in Theorem \ref{prop:2ndSF} that the function $V_{xr}$ is jointly continuous on $\mathbb{R}^2$. The proof of such a property closely follows the arguments of Proposition 5.3 in \cite{Federico2014}; however, we provide a complete proof here in order to have a self-consistent result and also to correct a few small mistakes/typos contained in the aforementioned reference. 
 
Notice that
$$ V_{rx}(x,r)=0 \ \ \ \ ~\forall (x,r) \in   \R^2\setminus \overline{\mathcal{C}}.$$
According to that, the main result of this section establishes a smooth-fit principle for the mixed derivative. 
\begin{theorem}
\label{prop:2ndSF}
It holds
\begin{equation}
\label{Second-order smooth-fit}
\lim_{\substack{(x,r) \to \ (x_o,r_o) \\ (x,r) \in \mathcal{C}}} V_{rx}(x,r)=0 \ \ \ \ ~\forall (x_o,r_o) \in  \partial \mathcal{C}.
\end{equation}
\end{theorem}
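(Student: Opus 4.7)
The plan is to argue by contradiction, combining the viscosity solution property of $V$ (Proposition~\ref{prop:Vvisc}) with the structural results of Section~\ref{sec:valuecharact}, following the strategy of Proposition~5.3 in \cite{Federico2014}. By the natural symmetry between the two free boundaries (interchanging $\mathcal{I}\leftrightarrow\mathcal{D}$ via $r\mapsto -r$, $K\mapsto -K$), it suffices to treat $(x_o,r_o)\in\partial\mathcal{C}\cap\partial\mathcal{D}$, i.e.\ $r_o=b_2(x_o)$, so that $V_r(x_o,r_o)=K$ by continuity of $V_r$. By Lemma~\ref{monVr} and convexity of $V$ in $r$, the sets $\{(x,r_o):x\leq x_o\}$ and $\{(x_o,r):r\geq r_o\}$ both lie in $\mathcal{D}$; hence any sequence $(x_n,r_n)\in\mathcal{C}$ converging to $(x_o,r_o)$ must eventually satisfy $x_n>x_o$ or $r_n<r_o$. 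Inside $\mathcal{C}$, Proposition~\ref{Lemma: V is viscosity solution inside the continuation region}(ii) combined with Theorem~\ref{Theorem: Structure of the Value function} ensures that $V_{rx}$ exists and is continuous, and by Lemma~\ref{monVr} satisfies $V_{rx}\leq 0$.

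Suppose for contradiction that, along some subsequence, $V_{rx}(x_n,r_n)\to\ell$ with $\ell<0$. The aim is to construct a test function $\alpha\in C^{2,1}(\R^2;\R)$ such that $V-\alpha$ attains a local minimum at $(x_o,r_o)$ while $\alpha$ contradicts the viscosity supersolution inequality there. Two preparatory facts will be essential. First, the classical PDE
\begin{equation*}
\rho V(x,r)-\tfrac{\eta^2}{2}V_{xx}(x,r)-\theta(\mu-br-x)V_x(x,r)-f(x,r)=0
\end{equation*}
holds in $\mathcal{C}$ by Proposition~\ref{Lemma: V is viscosity solution inside the continuation region}; by the continuity of $V$, $V_x$ and $f$, the formula $V_{xx}=\tfrac{2}{\eta^2}[\rho V-\theta(\mu-br-x)V_x-f]$ provides a continuous extension $\overline{V}_{xx}$ of $V_{xx}$ to $\overline{\mathcal{C}}$. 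Second, on $\mathcal{D}$ we have $V(x,r)=z_2(x)+Kr$, and continuity across $\partial\mathcal{C}$ of the RHS above gives $z_2''(x_o)=\overline{V}_{xx}(x_o,r_o)$, so that $V_{xx}$ is actually continuous across $\partial\mathcal{C}$.

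Adapting the test function of \cite{Federico2014}, I would take
\begin{equation*}
\alpha(x,r):=V(x_o,r_o)+V_x(x_o,r_o)(x-x_o)+K(r-r_o)+\tfrac{1}{2}\gamma(x-x_o)^2+c(x-x_o)(r-r_o)+M(r-r_o)^2+\chi(x,r),
\end{equation*}
with $\gamma<\overline{V}_{xx}(x_o,r_o)$, $c\in(\ell,0)$, $M$ chosen so that the quadratic part of $V-\alpha$ is positive semidefinite in a two-sided neighborhood of $(x_o,r_o)$, and $\chi(x,r):=\delta\bigl((x-x_o)^4+(r-r_o)^4\bigr)$ a higher-order nonnegative corrector ensuring strict positivity. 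The touching-from-below condition is verified separately on the $\mathcal{D}$-side, where the matching $z_2''(x_o)=\overline{V}_{xx}(x_o,r_o)$ plus linearity of $V$ in $r$ reduces $V-\alpha$ to a tractable quadratic form (positive semidefinite for $M$ sufficiently negative), and on the $\mathcal{C}$-side, where the assumption $V_{rx}\to\ell<c$ controls the cross term and the continuous extension of $V_{rr}$ (together with the $M$ choice) handles the $r$-quadratic component. Plugging $\alpha$ into the viscosity supersolution inequality, the first-order conditions force $\alpha_r(x_o,r_o)=K$, so only the term $\rho\alpha-\mathcal{L}^{r_o}\alpha-f=\tfrac{\eta^2}{2}(\overline{V}_{xx}-\gamma)>0$ remains to be checked; this is consistent with supersolution by itself, and the contradiction has to be extracted at a finer level by exploiting the $r$-differentiated PDE $\rho V_r-\mathcal{L}^r V_r=f_r-\theta b V_x$ valid in $\mathcal{C}$, whose boundary behavior clashes with the $\mathcal{D}$-side relations $V_{rx}=V_{rr}=0$ precisely when $\ell<0$.

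The main obstacle is the delicate balancing of the parameters $\gamma,c,M,\delta$ required to simultaneously guarantee (i) that $V-\alpha$ has a strict local minimum at $(x_o,r_o)$ in a full two-dimensional neighborhood crossing $\partial\mathcal{C}$, and (ii) that the finer expansion based on the $r$-differentiated PDE produces a quantitative contradiction with $\ell<0$. Additional care is needed because $\partial\mathcal{C}$ is only known to be monotone and semi-continuous (Proposition~\ref{Properties of the boundaries b_1 and b_2}), so the geometry of the approach sequences to $(x_o,r_o)$ within $\mathcal{C}$ must be handled without assuming any smoothness of the boundary. The coupling term $\theta b V_x$ in the $r$-differentiated PDE, which is absent from the decoupled setting of \cite{Federico2014}, is new here but fits into the scheme via the continuity of $V_x$ across $\partial\mathcal{C}$. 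The symmetric argument disposes of the case $(x_o,r_o)\in\partial\mathcal{C}\cap\partial\mathcal{I}$, completing the proof.
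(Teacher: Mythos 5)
Your proposal diverges from the paper's proof in both the choice of viscosity inequality (you use the supersolution side; the paper uses the subsolution side) and in the central mechanism (you try to build an explicit quadratic test function at the boundary point, whereas the paper computes $V_{xx}$ on the action side via the chain rule after invoking the implicit function theorem). As written, your argument has two genuine gaps.

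First, there is a circularity in your preparatory facts. You claim that continuity of $\rho V - \theta(\mu - br -x)V_x - f$ across $\partial\mathcal{C}$ gives $z_2''(x_o)=\overline{V}_{xx}(x_o,r_o)$, so that ``$V_{xx}$ is actually continuous across $\partial\mathcal{C}$''. But that identity is precisely what fails if the smooth-fit does not hold: on the action side one has, by the chain rule (cf.\ equation \eqref{derivative of V with pi pi second order smooth fit1} of the paper), $V_{xx}(x,r_o) = \bar{V}_{xx}(x,b_1(x)) + \bar{V}_{xr}(x,b_1(x))\,b_1'(x)$, and the extra term $\bar{V}_{xr}\,b_1'$ is exactly the source of the potential discontinuity in $V_{xx}$. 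The variational inequality only gives $\rho V - \mathcal{L}^r V - f \leq 0$ (or $\geq 0$) in the action region, not equality, so there is no a priori reason that the PDE-extension $\overline{V}_{xx}$ should match $z_2''(x_o)$. Assuming this match in order to make the test function touch from below is assuming what you are trying to prove.

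Second, and more structurally, your argument does not actually reach a contradiction. You admit that after forcing $\alpha_r(x_o,r_o)=K$, the viscosity supersolution inequality reduces to $\frac{\eta^2}{2}(\overline{V}_{xx}-\gamma)>0$, which is ``consistent with supersolution by itself,'' and that the contradiction ``has to be extracted at a finer level.'' This finer-level extraction is the entire content of the proof, and it is not supplied. The paper gets the quantitative contradiction from a step you do not have: under the contradiction hypothesis $\bar{V}_{rx}(x_o,r_o)\leq -\varepsilon$, the implicit function theorem applied to $\bar{V}_r+K$ shows that $g_1$ (resp.\ $g_2$) is Lipschitz and hence $b_1$ (resp.\ $b_2$) is strictly increasing with $b_1'\geq 1/M_\varepsilon$ on a dense set. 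This uniform lower bound, combined with $\bar{V}_{xr}\leq -\varepsilon$, pushes $V_{xx}$ on the action side strictly below $\bar{V}_{xx}$ by at least $\varepsilon/M_\varepsilon$. Inserting this into the subsolution inequality on the action side and letting $x\downarrow x_o$ gives $\varepsilon/M_\varepsilon\leq 0$, the desired contradiction. Your proposal never produces an analogous quantitative estimate; it merely notes that a ``clash'' with $V_{rx}=V_{rr}=0$ on the $\mathcal{D}$-side should occur, without showing why it cannot be accommodated by the inequalities alone.

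A final remark: the paper's proof must also treat the case where $b_1$ jumps at $x_o$ (its Case (b), handled by showing $\bar{V}_{rx}$ is monotone along the vertical segment on $\partial\mathcal{C}$). Your proposal does not address the non-graph portion of $\partial\mathcal{C}$ at all, although you correctly flag that the boundary is only known to be monotone and one-sidedly continuous.
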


\begin{proof}
We prove \eqref{Second-order smooth-fit} only at $\partial^1 \mathcal{C}:=\{(x,r)\in \R^2:\, V_r(x,r)=-K\}$, and we distinguish two different cases for $(x_o,r_o) \in \partial^1 \mathcal{C}$.
\vspace{0.25cm}

\emph{Case (a)}. Assume that $r_o=b_1(x_o)$. Define the function
\begin{equation}
\label{definition of V bar}
\bar{V}(x,r):= A(r)\psi(x-\bar{\mu}(r))+B(r)\varphi(x-\bar{\mu}(r))+\widehat{V}(x,r), \quad (x,r) \in \mathbb{R}^2,
\end{equation}
where $A,B$ are the functions of Theorem \ref{Theorem: Structure of the Value function}. Then, one clearly has that $\bar{V} \in C^{2,1}(\mathbb{R}^2;\mathbb{R})$. Moreover, the mixed derivative $\bar{V}_{rx}$ exists and is continuous. Since  $\bar{V}=V$ in $\bar{\mathcal{C}}$, by Lemma \ref{monVr} we conclude that
$\bar{V}_{rx}\leq 0$ { in } $\mathcal{C}$. Then by continuity of $\bar{V}_{rx}$, in order to show \eqref{Second-order smooth-fit} we have only to exclude that
\begin{equation}
\label{assuming V bar is strictly negative at the boundary}
\bar{V}_{rx}(x_o,r_o) < 0,
\end{equation}
Assume, by contradiction, \eqref{assuming V bar is strictly negative at the boundary}.
Due to the continuity of $\bar{V}$, we can then find an $\varepsilon > 0$ such that
\begin{equation}
\label{derivative of V w.r.t r and pi smaller -epsilon}
\bar{V}_{rx}(x,r) \leq -\varepsilon \ \ \ \forall (x,r)\in N_{x_o,r_o},
\end{equation}
where $N_{x_o,r_o}$ is a suitable neighborhood  of the point $(x_o,r_o) \in \partial^1 \mathcal{C}$. Notice now that $\bar{V}_r(x_o,r_o)=V_r(x_o,r_o)=-K$, because $(x_o,r_o)\in \partial^1 \mathcal{C}$, and $\bar{V}=V$ in $N_{x_o,r_o} \cap \bar{\mathcal{C}}$. Then, using \eqref{assuming V bar is strictly negative at the boundary}, we can apply the implicit function theorem to $\bar{V}_r(x,r)+K$, getting the existence of a continuous function $\bar{g}_1:(r_o-\delta,r_o+\delta) \to \mathbb{R}$, for a suitable $\delta >0$, such that $\bar{V}_r(r,\bar{g}_1(r))=-K$ in $(r_o-\delta,r_o+\delta)$. Moreover, taking into account the regularity of $A,B$, we have that $\bar{g}_1\in W^{1,\infty}(r_o-\delta,r_o+\delta)$ as
\begin{equation*}
\bar{g}_1^{\prime}(r)=-\frac{\bar{V}_{rr}(r,g_1(r))}{\bar{V}_{rx}(r,g_1(r))}  \quad \text{a.e.~ in } (r_o-\delta,r_o+\delta).
\end{equation*}
Hence, by \eqref{derivative of V w.r.t r and pi smaller -epsilon} and the fact that $A,B \in W^{2,\infty}_{\text{loc}}((\underline{b}_1,\bar{b}_2);\mathbb{R})$ (see also Remark \ref{rem:ABW2} for the case $r_o=\underline{b}_1$), there exists $M_{\varepsilon} >0$ such that
\begin{equation}\label{loclip}
|\bar{g}_1(r)-\bar{g}_1(s)| \leq M_{\varepsilon} |r-s| \ \ \ \forall r,s\in(r_o-\delta,r_o+\delta).
\end{equation}
Furthermore, recalling the definition of $g_1$ in \eqref{defintion of the functions g_1 and g_2},  $\bar{g}_1$ and $g_1$ coincide in $(r_o-\delta,r_o+\delta)$. Therefore, $g_1$ is continuous in $(r_o-\delta,r_o+\delta)$, and this fact immediately implies that $b_1$ - which is nondecreasing by Proposition \ref{Properties of the boundaries b_1 and b_2} - is actually strictly increasing in a neighborhood $(x_o-\vartheta,x_o+\vartheta)$, for a suitable $\vartheta>0$. Hence, $g_1=b_1^{-1}$ over $b_1( (x_o-\vartheta,x_o+\vartheta))$, and from \eqref{loclip} we find
\begin{equation}\label{locpill}
M_\varepsilon |b_1(x)-b_1(y)| \geq | \bar{g}_1(b_1(x))-\bar{g}_1(b_1(y))| = |x-y|,  \ \ \ \forall x,y\in(r_o-\delta,r_o+\delta).
\end{equation}
Recalling again that $b_1$ is strictly increasing in $b_1( (x_o-\vartheta,x_o+\vartheta))$, hence differentiable a.e.\ overthere, from \eqref{locpill}, we obtain
\begin{equation}
\label{area of b_1}
\exists \ b_1^{\prime} (x) \geq \frac{1}{M_{\epsilon}} \quad \forall x \in  \mathcal{Y},
\end{equation}
where $\mathcal{Y}$ is a dense set (actually of full Lebesgue measure) in $[x_0,x_0+\vartheta)$.
 
Consider now the function $[x_o,x_o+\vartheta) \ni x \mapsto V(x,r_o) \in \mathbb{R}_+$. Since $b_1$ is strictly increasing, we have that the set $K:=\{(x,r_o):x \in [x_o,x_o+\vartheta)\} \subset \mathcal{I}$, and therefore by Theorem \ref{Theorem: Structure of the Value function} that
\begin{equation}
\label{Secondorder eq 1}
V(x,r_o)=-K r_o + z_1(x) \quad \forall x \in [x_o,x_o+\vartheta).
\end{equation}
Furthermore, defining the function  
\begin{equation*}
[x_o,x_o+\vartheta) \to \mathbb{R}, \quad x \mapsto z_1(x)=V(x,b_1(x))+K b_1(x)=\bar{V}(x,b_1(x))+K b_1(x),
\end{equation*}
and applying the chain rule we get that
\begin{equation}
\label{z1prime}
\exists \ z_1^{\prime}(x)=\bar{V}_{x}(x,b_1(x))+\bar{V}_{r}(x,b_1(x))b_1^{\prime}(x)+K b_1^{\prime}(x), \ \ \ \forall x \in \mathcal{Y}.
\end{equation}
Since by definition of $b_1$ we have that $\bar{V}_{r}(x,b_1(x))=V_{r}(x,b_1(x))=-K$, we obtain from \eqref{z1prime} 
\begin{equation*}
z_1^{\prime}(x)=\bar{V}_{x}(x,b_1(x)), \quad \forall x \in \mathcal{Y}.
\end{equation*}
Using this result together with \eqref{Secondorder eq 1} we obtain existence of $V_x(x,r_o)$ for all $x \in \mathcal{Y}$ and moreover
\begin{equation}
\label{derivative of V with pi second order smooth fit}
V_{x}(x,r_o)=z_1^{\prime}(x)=\bar{V}_{x}(x,b_1(x)) \quad \forall x \in \mathcal{Y}.
\end{equation}
Using again the chain rule in \eqref{derivative of V with pi second order smooth fit} we obtain existence of $V_{xx}(x,r_o)$ for all $x \in \mathcal{Y}$ and
\begin{equation}
\label{derivative of V with pi pi second order smooth fit1}
V_{xx}(x,r_o)=z_1^{\prime \prime}(x)=\bar{V}_{x x}(x,b_1(x))+ \bar{V}_{x r}(x,b_1(x))b_1^{\prime}(x) \quad \forall x \in \mathcal{Y}.
\end{equation}
Combining \eqref{derivative of V with pi pi second order smooth fit1} with \eqref{area of b_1} and \eqref{derivative of V w.r.t r and pi smaller -epsilon} one obtains 
\begin{equation}\label{derivative of V with pi pi second order smooth fit}
V_{xx}(x,r_o) \leq \bar{V}_{x x}(x,b_1(x)) - \frac{\varepsilon}{M_{\varepsilon}} \quad \forall x \in \mathcal{Y}.
\end{equation}

Using now that $V$ is a viscosity solution to \eqref{HJB system} (in particular a subsolution) by Proposition \ref{prop:Vvisc}, that $V_{xx}$ exists for all points $x \in \mathcal{Y}$, and \eqref{derivative of V with pi second order smooth fit} and \eqref{derivative of V with pi pi second order smooth fit}, we obtain that
\begin{align}
\label{eg 2 in smooth fit}
f(x,r_o)
&\geq \rho V(x,r_o)-\theta(\mu - b r_o - x)V_{x}(x,r_o)-\frac{1}{2}\eta^2V_{x x}(x,r_o) \nonumber \\
&\geq \rho V(x,r_o)-\theta(\mu - b r_o - x)\bar{V}_{x}(x,b_1(x))-\frac{1}{2}\eta^2 \big(\bar{V}_{x x}(x,b_1(x)) - \frac{\varepsilon}{M_{\varepsilon}}\big) 
\end{align}
for all $x \in \mathcal{Y}$.
Since $\mathcal{Y}$ is dense in $[x_o,x_o+\vartheta)$, we can take a sequence $(x^n)_{n\in\mathbb{N}} \subset \mathcal{Y}$ such that $x^n \downarrow x_o$.
Evaluating \eqref{eg 2 in smooth fit} at $x=x^n$, taking limits as $n\uparrow\infty$, using the right-continuity of $b_1$, the fact that $r_o=b_1(x_o)$, and the fact that $\bar{V} \in C^{1,2}(\mathbb{R}^2;\mathbb{R})$, we obtain
\begin{equation}
\label{eq3 in second order smooth fit}
f(x_o,r_o) \geq \rho \bar{V}(x_o,r_o)-\theta(\mu - b r_o - x_o)\bar{V}_{x}(x_o,r_o)-\frac{1}{2}\eta^2 \big(\bar{V}_{x x}(x_o,r_o) - \frac{\varepsilon}{M_{\varepsilon}}\big).
\end{equation}
On the other hand, since $\rho \bar{V}(x,r)-[\mathcal{L}^r\bar{V}(\cdot,r)](x)=\rho V(x,r)-[\mathcal{L}^rV(\cdot,r)](x) = f(x,r)$ for all $(x,r) \in \mathcal{C}$, using that $\bar{V} \in C^{1,2}(\mathbb{R}^2;\mathbb{R})$ and $(x_o,r_o) \in \bar{\mathcal{C}}$, we obtain by continuity of $\bar{V}$ that
\begin{equation}
\label{eq4 in second order smooth fit}
f(x_o,r_o) = \rho \bar{V}(x_o,r_o)-\theta(\mu - b r_o - x_o)\bar{V}_{x}(x_o,r_o)-\frac{1}{2}\eta^2 \bar{V}_{x x}(x_o,r_o).
\end{equation}
Combining now \eqref{eq4 in second order smooth fit} and \eqref{eq3 in second order smooth fit} leads to $\frac{\varepsilon}{M_{\varepsilon}} \leq 0$. This gives the desired contradiction.
\vspace{0.25cm}

\emph{Case (b)}. Assume now that $x_o=g_1(r_o)$ and $r_o < b_1(x_o)$, with $b_1(x_o)<\infty$ due to Proposition \ref{Properties of the boundaries b_1 and b_2}-(i). Notice that such a case occurs if the function $b_1$ has a jump at $x_o$. Defining the segment $\Gamma:=\{(r,x_o):r \in [r_o,b_1(x_o)]\}$, it follows that $\Gamma \subset \partial^1 \mathcal{C}$. Moreover, letting again $\bar{V}$ as in \eqref{definition of V bar}, we have that $\bar{V}_r=V_r=-K$ in $\Gamma$, so that 
\begin{equation}
\label{integral for V bar}
-K-\bar{V}_r(x,r) =\bar{V}_r(x_o,r)-\bar{V}_r(x,r) =\int_{x}^{x_o} \bar{V}_{rx}(u,r)~~\d u, \ \ \forall r \in [r_o,b_1(x_o)], \ \forall x\leq x_o.
\end{equation}
Using now that $A^{\prime},B^{\prime}$ are locally Lipschitz by Theorem \ref{Theorem: Structure of the Value function}, we can take the derivative with respect to $r$ in \eqref{integral for V bar} (in the Sobolev sense) and we obtain
\begin{equation*}
-\bar{V}_{rr}(x,r)=\int_{x}^{x_o} \bar{V}_{rx r}(u,r)~\d u \quad \text{for a.e.}  \ \ r \in [r_o,b_1(x_o)], \  x\leq x_o.
\end{equation*}
The convexity of $V$ and the fact that $\bar{V}=V$ in $\bar{\mathcal{C}}$, yields $\bar{V}_{rr} \geq 0$ (again in the Sobolev sense) and therefore
\begin{equation*}
0 \geq \int_{x}^{x_o} \bar{V}_{rxr}(u,r)~\d u \quad \text{for a.e.} \quad \text{for a.e.}  \ \ r \in [r_o,b_1(x_o)], \  x\leq x_o.
\end{equation*}
Dividing now both sides by $(x_o-x)$, letting $x \to x_o$, and invoking the mean value theorem one has
\begin{equation*}
0 \geq \bar{V}_{r x r}(x_o,r) \quad \text{for a.e.}  \ \ r \in [r_o,b_1(x_o)], \  x\leq x_o.
\end{equation*}
This implies that $\bar{V}_{rx}$ is nonincreasing with respect to $r \in [r_o,b_1(x_o)]$.

If we now assume, as in Case (a) above, that $\bar{V}_{rx}(x_o,r_o) < 0$, then we must also have $\bar{V}_{r x}(x_o,b_1(x_o)) < 0$. We are therefore left with the assumption employed in the contradiction scheme of Case (a), and we can thus apply again the rationale of that case to obtain a contradiction. This completes the proof.
\end{proof}


\section{A System of Equations for the Free Boundaries}
\label{sec:finalpropandeqs}

In this section we move on by proving further properties of the free boundaries and determining a system of functional equations for them.

\subsection{Further Properties of the Free Boundaries}
\label{furtherprop}

We start by studying the limiting behavior of the free boundaries and some natural bounds.

\begin{proposition} 
\label{limit behavior of the boundaries and continuity of the g}
\begin{itemize} 
\item[(i)] Suppose that $\lim_{x \to \pm\infty}f_x(x,r)=\pm\infty$ for any $r \in \mathbb{R}$. Then 
$$\bar{b}_1=\lim_{x \uparrow \infty} b_1(x)=\infty, \quad  \underline{b}_2=\lim_{x \downarrow -\infty} b_2(x)=-\infty;$$
hence $\underline{b}_1=-\infty$ and $\bar{b}_2= \infty$.
\item[(ii)] Define 
$$\zeta_1(r):=\inf\{x \in \mathbb{R}: \theta b V_x(x,r) - f_r(x,r) - \rho K \geq 0\}, \quad r\in \mathbb{R},$$
$$\zeta_2(r):=\sup\{x \in \mathbb{R}: \theta b V_x(x,r) - f_r(x,r) + \rho K \leq 0\}, \quad r\in \mathbb{R}.$$
Then, for any $r\in \mathbb{R}$, we have  
$$g_1(r) \geq \zeta_1(r) > \zeta_2(r) \geq g_2(r).$$
\end{itemize}
\end{proposition}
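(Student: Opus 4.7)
I prove (ii) first, since (i) builds on it.

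For \emph{part (ii)}, I use the Dynkin game of Theorem \ref{thm:Dynkin}. To establish $g_1(r) \geq \zeta_1(r)$, pick any $(x_0, r_0) \in \mathcal{I}$; then $V_r(x_0, r_0) = -K$ forces $\tau^{\star}(x_0, r_0) = 0$, while $V_r(x_0, r_0) = -K < K$ combined with the continuity of $V_r$ (Proposition \ref{prop:Vprelim}) and of the paths of $X^{x_0, r_0}$ ensures $\sigma^{\star}(x_0, r_0) > 0$ almost surely, so $\mathsf{P}(\sigma^{\star} \leq \varepsilon) \downarrow 0$ as $\varepsilon \downarrow 0$. Expanding the saddle-point inequality $\Psi(\sigma^{\star}, \varepsilon; x_0, r_0) \leq V_r(x_0, r_0) = -K$ to leading order in $\varepsilon$, then dividing by $\varepsilon$, yields
\[
\frac{1}{\varepsilon}\E\!\left[\int_0^\varepsilon e^{-\rho t}\big(-\theta b\, V_x(X_t^{x_0, r_0}, r_0) + f_r(X_t^{x_0, r_0}, r_0)\big)\,\d t\right] \leq -\rho K + o(1).
\]
Sending $\varepsilon \downarrow 0$ and using the continuity of $V_x$ and $f_r$ gives $\theta b\, V_x(x_0, r_0) - f_r(x_0, r_0) \geq \rho K$, hence $x_0 \geq \zeta_1(r_0)$. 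Arbitrariness of $(x_0, r_0) \in \mathcal{I}$ delivers $\mathcal{I}\cap(\mathbb{R}\times\{r_0\}) \subset [\zeta_1(r_0), \infty)\times\{r_0\}$, so $g_1(r_0) \geq \zeta_1(r_0)$. The symmetric argument on $\mathcal{D}$ (with $\sigma^{\star} = 0$ and $\tau^{\star} > 0$ a.s.) yields $g_2(r) \leq \zeta_2(r)$. The strict middle inequality $\zeta_2(r) < \zeta_1(r)$ follows from the observation that the map $x \mapsto h(x, r) := -\theta b\, V_x(x, r) + f_r(x, r)$ is continuous (by Proposition \ref{prop:Vprelim} and Remark \ref{rem:assf}-(i)) and nonincreasing (by convexity of $V$ in $x$ and Assumption \ref{ass:f}-(iv)), and that the level sets $\{h(\cdot, r) \leq -\rho K\}$ and $\{h(\cdot, r) \geq \rho K\}$ are disjoint (using the conventions $\inf\emptyset = \infty$, $\sup\emptyset = -\infty$ in the degenerate cases).

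For \emph{part (i)}, it suffices to prove $\bar b_1 = \infty$ (the claim $\underline b_2 = -\infty$ is symmetric); the ``hence'' clause is then obtained by passing to the limits $x \to \pm\infty$ in $b_1(x) < b_2(x)$ (Proposition \ref{Properties of the boundaries b_1 and b_2}-(iii)), yielding $\underline b_1 \leq \underline b_2$ and $\bar b_1 \leq \bar b_2$. Argue by contradiction: assume $\bar b_1 < \infty$ and fix $r_0 > \bar b_1$, so that $V_r(x, r_0) > -K$ for every $x \in \mathbb{R}$. I split according to whether $\bar b_2 = \infty$ or $\bar b_2 < \infty$. In the first case, by Proposition \ref{Properties of the boundaries b_1 and b_2}-(iii) there is a sequence $x_n \to \infty$ with $(x_n, r_0) \in \mathcal{C}$; by Theorem \ref{Theorem: Structure of the Value function},
\[
V(x_n, r_0) = A(r_0)\psi(x_n - \bar\mu(r_0)) + B(r_0)\varphi(x_n - \bar\mu(r_0)) + \widehat V(x_n, r_0).
\]
The polynomial growth of $V$ (Proposition \ref{prop:Vprelim}-(i)) versus the super-polynomial growth of $\psi$ at $+\infty$ forces $A(r_0) = 0$, and the same reasoning applied to $V_r$ forces $A'(r_0) = 0$. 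Consequently $V_r(x_n, r_0) - \widehat V_r(x_n, r_0) \to 0$. Direct differentiation using $\partial_r X_t^{x, r} = -b(1 - e^{-\theta t})$ (from \eqref{diff}) yields
\[
\widehat V_r(x, r_0) = \E\!\left[\int_0^\infty e^{-\rho t}\big(f_r(X_t^{x, r_0}, r_0) - b(1-e^{-\theta t}) f_x(X_t^{x, r_0}, r_0)\big)\,\d t\right],
\]
and under $f_x(\cdot,r_0) \to \infty$, monotone convergence (exploiting convexity of $f$ in $x$) gives $\widehat V_r(x, r_0) \to -\infty$, contradicting $V_r \geq -K$.

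In the second case ($\bar b_2 < \infty$), choose $r_0' > \max(\bar b_1, \bar b_2)$, so that $(x, r_0') \in \mathcal{D}$ for all $x$. The symmetric Dynkin-game perturbation on $\mathcal{D}$ (with $\sigma^{\star} = 0$) gives $\theta b\, V_x(x, r_0') \leq f_r(x, r_0') - \rho K$; since $f_r(\cdot, r_0')$ is nonincreasing in $x$ (Assumption \ref{ass:f}-(iv)), $V_x(\cdot, r_0')$ is uniformly bounded above for $x$ large. Combining this with the representation $V(\cdot, r_0') = z_2(\cdot) + Kr_0'$ (Theorem \ref{Theorem: Structure of the Value function}) and the growth of $\widehat V$ obtained as in the first case leads to a contradiction. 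The main obstacle is precisely this second case, where the delicate interplay between the Dynkin-game bound on $V_x$ and the HJB-driven growth of $V$ must be carefully balanced; an alternative route is to first invoke the symmetric statement $\underline b_2 = -\infty$ (applied under $f_x \to -\infty$ at $-\infty$) and combine with the monotonicity $b_1 < b_2$ to deduce $\bar b_2 = \infty$, thereby ruling out the second case altogether.
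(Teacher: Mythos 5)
Your argument for part (ii) is correct and takes a genuinely different route from the paper. Instead of the paper's approach — which invokes the semiharmonic/free-boundary characterization (after Peskir) to obtain the pointwise variational system \eqref{FBP}, and then reads off the inequalities on the action regions — you extract the inequalities directly from the Dynkin game saddle-point by perturbing the stopping time to a deterministic $\varepsilon$ and taking $\varepsilon \downarrow 0$. This is a nice, self-contained alternative: it needs only Theorem \ref{thm:Dynkin} and continuity of $V_x, f_r$, and in particular it does not rely on the variational system \eqref{FBP}, whose derivation in the paper passes through the second-order smooth-fit Theorem \ref{prop:2ndSF}. (Two small points worth writing out: the term $K\,\E[e^{-\rho\sigma^\star}\mathds{1}_{\{\sigma^\star<\varepsilon\}}]$ has the favourable sign and can simply be dropped, and one needs $\E[\int_0^{\sigma^\star\wedge\varepsilon} e^{-\rho t}H\,\d t]/\varepsilon \to H(x_0,r_0)$, which follows from $\sigma^\star>0$ a.s., continuity of $H$ along paths, and dominated convergence — both steps are easy but should be justified.)

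Your proof of part (i), Case 1, is also correct, and again different in mechanism from the paper. You show $A(r_0)=A'(r_0)=0$ from polynomial growth and then derive $\widehat V_r(x,r_0)\to-\infty$ to contradict $V_r\geq -K$; the paper instead shows $V_x(x,r_o)\to\infty$ and contradicts the Dynkin-game upper bound on $V_r$ obtained by plugging the hitting time $\hat\sigma_x$ into the game functional. Both are legitimate.

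The problem is the handling of Case 2 in part (i). First, the case split on $\bar b_2 = \infty$ versus $\bar b_2 < \infty$ is not the right dichotomy: what matters is whether one can pick $r_0$ with $(x,r_0)\in\mathcal{C}$ for all $x$ large, i.e.\ $r_0\in(\bar b_1, \bar b_2)$ with $g_1(r_0)=\infty$. This interval is nonempty whenever $\bar b_1 < \bar b_2$, and $\bar b_1 < \bar b_2$ is forced by Proposition \ref{prop1:g1g2}-(iii): if $\bar b_1 = \bar b_2 =: \bar b < \infty$, then for $r > \bar b$ we would have $V_r(\cdot,r)\equiv K$, giving $g_1(r)=g_2(r)=\infty$, which contradicts $g_1(r)>g_2(r)$. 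So, once Proposition \ref{prop1:g1g2}-(iii) is invoked correctly, Case 1 already covers everything and Case 2 never arises. Second, and more seriously, the ``alternative route'' you propose to bypass Case 2 contains a logical error: from $\underline b_2 = -\infty$ and $b_1 < b_2$ you can only conclude $\underline b_1 \leq \underline b_2 = -\infty$, \emph{not} $\bar b_2 = \infty$ (the infimum of $b_2$ says nothing about its supremum). So that route is broken, and the Case 2 argument as sketched (``leads to a contradiction'') is left incomplete. The fix is not more work in Case 2 — it is to drop the case split and argue as in Case 1 for any $r_0\in(\bar b_1,\bar b_2)$, which is what the paper does.
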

\begin{proof}
We prove the two claims separately.
\vspace{0.25cm}

\emph{Proof of (i)}. Here we show that $ \lim_{x \uparrow \infty} b_1(x)=\infty$. The fact that $ \lim_{x \downarrow -\infty} b_2(x)=-\infty$ can be proved by similar arguments. We argue by contradiction assuming $\bar{b}_1:=\lim_{x \uparrow \infty} b_1(x)< \infty$. Take $r_o> \bar{b}_1$, so that $\tau^{\star}(x,r_o)=\infty$ for all $x\in\R$. Then, take $x_o>g_2(r_o)$ such that $(x_o,r_o)\in\mathcal{C}$. Clearly, every $x>x_o$ belongs to $\mathcal{C}$, and therefore, by the representation \eqref{Structure of V}, we obtain that it must be $A(r_o)=0$; indeed, otherwise, by taking limits as $x\to\infty$ and using \eqref{eq:psi}, we would contradict Proposition \ref{prop:Vprelim}. Moreover, since $\varphi'(x)\to 0$ when $x\to\infty$ (cf.\ \eqref{eq:psi}), we then have by dominated convergence 
\begin{equation}\label{stimavx}
\lim_{x\to\infty} V_x(x,r_0)=\lim_{x\to\infty} \widehat{V}_x(x,r_o)= \lim_{x\to\infty} \E\left[\int_0^\infty e^{-(\rho +\theta)t}f_x(X_t^{x,r_o}, r_o)\d t\right]=\infty.
\end{equation}
Now, setting 
$$\hat{\sigma}_x :=\inf\{t\geq 0:X_t^{x,r_o} \leq x_o\},$$ 
for $x>x_o$, we have by monotonicity of $f_r(\cdot,r)$ (cf.\ Assumption \ref{ass:f}-(iv))
 \begin{align}
 -K 
 &< V_r(x,r_o)= \inf_{\sigma \in \mathcal{T}}\E\bigg[\int_0^{ \sigma} e^{-\rho t}\Big( -b\theta V_x(X_t^{x,r_o}, r_o) +f_r(X_t^{x,r_o}, r_o)\Big)~\d t +e^{-\rho \sigma} K  \bigg] \nonumber \\
&\leq \E\bigg[\int_0^{\hat{\sigma}_x} e^{-\rho t}\Big(-b\theta V_x(X_t^{x,r_o},r_o) + f_r(x_o, r_o)\Big)~\d t + K  \bigg].
\end{align}
The latter implies 
\begin{equation}
 \label{eq in proof limits boundaries}
2K + \frac{|f_r(x_o, r_o)|}{\rho} \geq b\theta \E\bigg[\int_0^{\hat{\sigma}_x} e^{-\rho t} V_x(X_t^{x_o,r_o},r_o)~\d t\bigg].
\end{equation}
Notice that one has $\hat{\sigma}_x \to \infty$ $\P$-a.s.\ as $x\to\infty$. Hence, by dominated convergence we get a contradiction from \eqref{stimavx} and \eqref{eq in proof limits boundaries}. Finally, the fact that $\bar{b}_2 = \infty$ follows by noticing that $b_2(x) \geq b_1(x)$ for any $x \in \R$ (cf.\ Proposition \ref{Properties of the boundaries b_1 and b_2}-(iii)). 
%
\vspace{0.25cm}

\emph{Proof of (ii)}. Fix $r\in \R$. Recall that $V_r(\cdot,r) \in C(\R;\R)$ by Proposition \ref{prop:Vprelim}, $V_{rx}(\cdot,r) \in C(\R;\R)$ by Theorem \ref{prop:2ndSF}, and $V_{rxx}(\cdot,r) \in L^{\infty}_{\text{loc}}(\R;\R)$ by direct calculations on the representation of $V$ given in Theorem \ref{Theorem: Structure of the Value function}. Also, it is readily verified from \eqref{derivative of V with respect to r equals the game} that $-K \leq V_r(\cdot,r) \leq K$ on $\R^2$. Then, the semiharmonic characterization of \cite{Peskir2008} (see equations (2.27)--(2.29) therein, suitably adjusted to take care of the integral term appearing in \eqref{derivative of V with respect to r equals the game}), together with the above regularity of $V_r(\cdot,r)$, allow to obtain by standard means that $(V_r(\cdot,r),g_1(r),g_2(r))$ solves  
\begin{equation}
\label{FBP}
\begin{cases}
\big(\mathcal{L}^r - \rho\big)V_r(x,r) = \theta b V_x(x,r) - f_r(x,r) & \text{on } g_2(r) < x < g_1(r),\\
\big(\mathcal{L}^r - \rho\big)V_r(x,r) \geq \theta b V_x(x,r) - f_r(x,r) & \text{on a.e.}\ x < g_1(r), \\
\big(\mathcal{L}^r - \rho\big)V_r(x,r) \leq \theta b V_x(x,r) - f_r(x,r) & \text{on a.e.}\ x > g_2(r), \\
-K \leq V_r(x,r) \leq K & x \in \mathbb{R}, \\
V_r(g_1(r),r) = - K \quad \text{and} \quad V_r(g_2(r),r) = K,\\
V_{rx}(g_1(r),r) = 0 \quad \text{and} \quad V_{rx}(g_2(r),r) = 0.
\end{cases}
\end{equation}
In particular, we have that $V_r(x,r)=K$ for any $x<g_2(r)$, and therefore from the second equation in \eqref{FBP} we obtain
$$-\rho K \geq \theta b V_x(x,r) - f_r(x,r):=\Lambda(x,r), \quad \forall x < g_2(r).$$
Since the mapping $x \mapsto \Lambda(x,r)$ is nondecreasing for any given $r \in \mathbb{R}$ by the convexity of $V$ and the assumption on $f_r$ (cf.\ Assumption \ref{ass:f}), we obtain that
$$g_2(r) \leq \zeta_2(r)=\sup\{x \in \mathbb{R}: \theta b V_x(x,r) - f_r(x,r) + \rho K \leq 0\}.$$
An analogous reasoning also shows that 
$$g_1(r) \geq \zeta_1(r)=\inf\{x \in \mathbb{R}: \theta b V_x(x,r) - f_r(x,r) - \rho K \geq 0\}.$$
Moreover, by monotonicity and continuity of $x \mapsto \theta b V_x(x,r) - f_r(x,r)$ we have for any $r \in \R$ that
\begin{eqnarray*}
& \zeta_1(r)=\inf\{x \in \mathbb{R}: \theta b V_x(x,r) - f_r(x,r) - 2\rho K + \rho K \geq 0\} \\
& > \inf\{x \in \mathbb{R}: \theta b V_x(x,r) - f_r(x,r) + \rho K \geq 0\} \\
& = \sup\{x \in \mathbb{R}: \theta b V_x(x,r) - f_r(x,r)  + \rho K \leq 0\} = \zeta_2(r).
\end{eqnarray*}
\end{proof}

The next result readily follows from Proposition \ref{limit behavior of the boundaries and continuity of the g}-(i).
\begin{corollary}
\label{cor:finiteg12}
Suppose that $\lim_{x \to \pm\infty}f_x(x,r)=\pm\infty$ for any $r \in \mathbb{R}$. Then $g_1(r)$ and $g_2(r)$ as in \eqref{defintion of the functions g_1 and g_2} are finite for any $r \in \R$.
\end{corollary}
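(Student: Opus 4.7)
The plan is to derive the claim directly from Proposition~\ref{limit behavior of the boundaries and continuity of the g}-(i) combined with the monotonicity of $b_1,b_2$ established in Proposition~\ref{Properties of the boundaries b_1 and b_2}-(ii). Under the standing hypothesis $\lim_{x\to\pm\infty}f_x(x,r)=\pm\infty$, Proposition~\ref{limit behavior of the boundaries and continuity of the g}-(i) simultaneously gives $\bar b_1=\infty$, $\underline b_2=-\infty$, and consequently $\underline b_1=-\infty$, $\bar b_2=\infty$. So both nondecreasing curves $b_1$ and $b_2$ range over the whole extended real line as $x$ traverses $\mathbb{R}$.

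Fix $r\in\mathbb{R}$ and recall $g_1(r)=\inf\{x\in\mathbb{R}:b_1(x)\geq r\}$. Since $\bar b_1=\infty$, there exists $x$ with $b_1(x)\geq r$, showing the set is nonempty, hence $g_1(r)<\infty$. On the other hand, since $\underline b_1=-\infty$ and $b_1$ is nondecreasing, we have $\lim_{x\to-\infty}b_1(x)=-\infty$, so for $x$ sufficiently negative $b_1(x)<r$, which forces $g_1(r)>-\infty$. Therefore $g_1(r)\in\mathbb{R}$.

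Symmetrically, for $g_2(r)=\sup\{x\in\mathbb{R}:b_2(x)\leq r\}$: $\underline b_2=-\infty$ ensures the set is nonempty, so $g_2(r)>-\infty$; and $\bar b_2=\infty$ together with the monotonicity of $b_2$ yields $\lim_{x\to\infty}b_2(x)=\infty$, hence $b_2(x)>r$ for $x$ large, giving $g_2(r)<\infty$. Thus $g_2(r)\in\mathbb{R}$, completing the proof.

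There is no substantive obstacle here: the whole content of the corollary is the bookkeeping observation that under the growth hypothesis on $f_x$ no finite ``saturation'' of $b_1$ or $b_2$ occurs, so neither of the degenerate alternatives listed in Proposition~\ref{prop1:g1g2}-(iv) is triggered.
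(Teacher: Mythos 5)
Your proof is correct and is exactly the "readily follows" argument the paper has in mind: Proposition~\ref{limit behavior of the boundaries and continuity of the g}-(i) gives $\bar b_1=\infty$, $\underline b_1=-\infty$, $\bar b_2=\infty$, $\underline b_2=-\infty$, and combined with the monotonicity of $b_1,b_2$ this pins down $g_1(r),g_2(r)\in\mathbb{R}$ via the definitions in \eqref{defintion of the functions g_1 and g_2}. You have simply made explicit the bookkeeping that the paper leaves implicit, so there is nothing to add.
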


\begin{proposition}
\label{prop:incrbds}
Let $f$ be strictly convex with respect to $x$ for all $r\in\R$ and such that $f_{rx} = 0$. Then the boundaries $b_1$ and $b_2$ are strictly increasing.
\end{proposition}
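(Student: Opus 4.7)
The plan is to argue by contradiction via a Dynkin-type representation of $V_{xr}$ near a putative horizontal segment of $\partial\mathcal{C}$. Suppose $b_1$ is not strictly increasing. Since $b_1$ is nondecreasing (Proposition \ref{Properties of the boundaries b_1 and b_2}-(ii)), there exist $x_1 < x_2$ and $r_0 \in \R$ with $b_1 \equiv r_0$ on $[x_1, x_2]$; we may take this interval maximal so that $b_1(x) < r_0$ for $x < x_1$ and $b_1(x) > r_0$ for $x > x_2$.

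First I would establish two key strict-inequality properties inside $\mathcal{C}$. Using that $V$ is a strong solution to the HJB (Remark \ref{rem:VS}) and $V(\cdot, r) \in C^{3, Lip}_{loc}$ on $\mathcal{C}$ by Proposition \ref{Lemma: V is viscosity solution inside the continuation region}-(ii), I differentiate $(\rho - \mathcal{L}^r) V = f$ twice in $x$ to obtain $(\rho + 2\theta - \mathcal{L}^r) V_{xx} = f_{xx} > 0$ in $\mathcal{C}$. Combined with $V_{xx} \geq 0$ (convexity, Proposition \ref{prop:Vprelim}) and the pointwise minimum principle at each fixed $r$, this gives $V_{xx} > 0$ throughout $\mathcal{C}$. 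Using in addition the hypothesis $f_{xr} = 0$ and differentiating once in $x$ and once in $r$ yields
\[
(\rho + \theta - \mathcal{L}^r) V_{xr} = -\theta b V_{xx} < 0 \qquad \text{in } \mathcal{C}.
\]
Because $V_{xr} \leq 0$ by Lemma \ref{monVr} and $V_{xr} = 0$ on $\partial \mathcal{C}$ by the second-order smooth-fit (Theorem \ref{prop:2ndSF}), the strong maximum principle applied cross-sectionally at each $r$ prevents an interior maximum, so $V_{xr} < 0$ strictly throughout $\mathcal{C}$.

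I then exploit the flat segment. By Proposition \ref{Properties of the boundaries b_1 and b_2}-(iii)--(iv) and left-continuity of $b_2$, there exist $\varepsilon, \delta > 0$ such that $[x_1 - \varepsilon, x_2 + \varepsilon] \subset \mathcal{C}_r := \{x : (x, r) \in \mathcal{C}\}$ for every $r \in (r_0, r_0 + \delta)$; indeed $g_1(r) > x_2$ because $b_1 \equiv r_0 < r$ on $[x_1, x_2]$, while $g_2(r) < x_1 - \varepsilon$ for $r$ close to $r_0$. Fix $x_0 \in (x_1, x_2)$ and set $\tau^*(r) := \inf\{t \geq 0 : X^{x_0, r}_t \notin [x_1 - \varepsilon, x_2 + \varepsilon]\}$. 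Applying It\^o's formula to $e^{-(\rho + \theta) t} V_{xr}(X^{x_0, r}_t, r)$ up to $\tau^*(r)$, using the PDE above, and taking expectations produces
\[
V_{xr}(x_0, r) = \E\bigl[e^{-(\rho + \theta)\tau^*(r)} V_{xr}(X^{x_0, r}_{\tau^*(r)}, r)\bigr] - \theta b\, \E\!\left[\int_0^{\tau^*(r)} e^{-(\rho + \theta) t} V_{xx}(X^{x_0, r}_t, r)\, \d t \right].
\]

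Letting $r \downarrow r_0$, the left-hand side tends to $V_{xr}(x_0, r_0) = 0$ by continuity of $V_{xr}$ and $(x_0, r_0) \in \partial \mathcal{C}$. Coupling $X^{x_0, r}$ to $X^{x_0, r_0}$ through the same Brownian motion yields pathwise convergence and $\tau^*(r) \to \tau^*(r_0)$ a.s.; dominated convergence (using local boundedness of $V_{xr}$ and $V_{xx}$ inherited from $V \in W^{2, \infty}_{loc}$) then shows that the first expectation tends to a nonpositive quantity and the second tends to $-\theta b\, \E[\int_0^{\tau^*(r_0)} e^{-(\rho + \theta) t} V_{xx}(X^{x_0, r_0}_t, r_0)\, \d t]$. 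Since $X^{x_0, r_0}$ stays near $x_0 \in (x_1, x_2)$ with positive probability over a positive initial time, and since $V_{xx}(\cdot, r_0) > 0$ on $(x_1, x_2)$ -- obtained by restricting the $\mathcal{C}$-equation to the flat boundary so that $z_1(x) := V(x, r_0) + K r_0$ solves $\rho z_1 - \tfrac{\eta^2}{2} z_1'' - \theta(\mu - b r_0 - x) z_1' = f(x, r_0) + \rho K r_0$ on $(x_1, x_2)$, and using $f_{xx} > 0$ to forbid an interior zero of $z_1''$ via a standard bootstrap -- this limit is strictly negative. The resulting identity $0 \leq 0 + (\text{strictly negative}) < 0$ is the desired contradiction, and a symmetric argument at the boundary $\{V_r = K\}$ proves $b_2$ strictly increasing. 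The main obstacle will be making the regularity/continuity step that carries $V_{xx}(X^{x_0, r_0}_t, r_0) > 0$ from the flat-segment ODE argument through to a positive expectation sufficiently rigorous.
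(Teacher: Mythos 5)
Your proposal heads toward the same contradiction as the paper, but by a considerably more circuitous route, and one piece of your setup has a genuine gap. The paper's proof is short and direct: assume $b_1$ is flat at level $r_o$ on a segment, observe that along that segment $V_r \equiv -K$ so $V_{rx} = V_{rxx} = 0$, and plug these into the equation for $V_r$ inside $\mathcal{C}$ (equation \eqref{derivativerinsideC}, which extends to $\partial^1\mathcal{C}$ by continuity); since $f_r$ depends only on $r$, this forces $V_x(\cdot,r_o)$ to be constant along the segment, hence $V_{xx}(\cdot,r_o)=0$ there, and then the ODE for $V(\cdot,r_o)$ forces $f(\cdot,r_o)$ to be affine, contradicting strict convexity. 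You instead derive a PDE for $V_{xr}$, establish strict inequalities $V_{xx}>0$ and $V_{xr}<0$ in $\mathcal{C}$ via maximum principles, and invoke a Dynkin formula for $V_{xr}$ followed by a limiting argument $r\downarrow r_0$. Note that once you have written $(\rho+\theta-\mathcal{L}^r)V_{xr} = -\theta b V_{xx}$, evaluating it directly on the flat segment (where $V_{xr}$ and all its $x$-derivatives vanish, since $V_r\equiv -K$ there) gives $V_{xx}(\cdot,r_0)=0$ at once, which is exactly the paper's intermediate conclusion; the entire Dynkin/maximum-principle scaffolding is therefore redundant. Moreover, your bootstrap argument on the segment (showing $V_{xx}(\cdot,r_0)>0$) is itself already a contradiction with this $V_{xx}=0$, without needing any probabilistic machinery.

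A concrete gap in the proposal as written: the claim that one can choose $\varepsilon,\delta>0$ with $[x_1-\varepsilon, x_2+\varepsilon]\subset\mathcal{C}_r$ for all $r\in(r_0,r_0+\delta)$ does not follow from "$g_1(r)>x_2$ for $r>r_0$." Indeed $g_1$ is only nondecreasing and one can have $g_1(r)\downarrow x_2$ as $r\downarrow r_0$, so $x_2+\varepsilon$ may fall outside $\mathcal{C}_r$ for $r$ arbitrarily close to $r_0$. This is patchable — take the exit interval to be a fixed subinterval of $(x_1,x_2)$ around $x_0$ — but as stated the construction of the stopping time $\tau^*(r)$ is not justified. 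Beyond that, the strong-maximum-principle steps (both for $V_{xx}>0$ and $V_{xr}<0$) rely on fourth-order $x$-derivatives of $V$, whereas the available regularity is only $V(\cdot,r)\in C^{3,\mathrm{Lip}}_{loc}$; these can be repaired (for a one-dimensional linear ODE one can argue via a direct Taylor expansion around an interior zero, as you implicitly suggest), but they add technical overhead that the paper's two-line ODE substitution avoids entirely.
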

\begin{proof}
We prove the claim only for $b_1$, since analogous arguments apply to prove it for $b_2$.
By Theorem \ref{Theorem: Structure of the Value function}, we can differentiate the first line of \eqref{Structure of V} with respect to $r$ and get by Proposition \ref{Lemma: V is viscosity solution inside the continuation region}-(i) that $V_r$ solves inside $\mathcal{C}$ the equation
\begin{equation}
\label{derivativerinsideC}
\frac{1}{2} \eta^2 V_{rxx}(x,r)+\theta(\mu - b r -x) V_{rx}(x,r) - \rho V_{r}(x,r) -\theta b V_{x}(x,r)+ \beta(r)=0,
\end{equation}
where $\beta(r):= f_r(\cdot,r)$, the latter depending only on $r$ by assumption.
By continuity, \eqref{derivativerinsideC} also holds on $\partial^1 \mathcal{C}=\{V_r=-K\}$.
Assume now, by contradiction, that the boundary $b_1$ is constant on $(x_o,x_o + \varepsilon)$, for some $x_o \in \mathbb{R}$ and some $\varepsilon>0$. 
Then $V_{rxx}=V_{rx}=0$ and $V_r=-K$ on $(x_o,x_o + \varepsilon)$. So, setting $r_o:=b_1(x_o)$, we obtain from \eqref{derivativerinsideC}
that 
$$\rho K + \beta(r_o)= \theta b V_{x}(x,r_o), \ \ \ \forall x\in(x_o,x_o+\varepsilon).$$
This means that 
$$V_{x}(\cdot,r_o)\equiv \frac{\rho K}{\theta b} + \frac{\beta(r_o)}{\theta b}, \ \ V_{xx}(\cdot,r_o)\equiv 0 \ \ \ \  \ \ \ \mbox{on} \ \  (x_o,x_o + \varepsilon).$$  On the other hand, by continuity, $V(\cdot,r_o)$ solves \eqref{eq: viscosity inside th continuation region} on $(x_o,x_o+\varepsilon)$. Therefore, 
\begin{equation}
\theta(\mu - b r_o - x) \Big[\frac{\rho K}{\theta b} + \frac{\beta(r_o)}{\theta b}\Big] - \rho V(x,r_o) + f(x,r_o) =0, \quad \forall x \in (x_o,x_o + \varepsilon).
\end{equation}
Since $f$ is strictly convex, we reach a contradiction. 
\end{proof}

Notice that the conditions on $f$ of Proposition \ref{prop:incrbds} (and of the following corollary) are satisfied, e.g., by the quadratic cost function of Remark \ref{rem:assf}. 
\begin{corollary}
\label{cor:contg12}
Let $f$ be strictly convex with respect to $x$ for all $r\in\R$ and such that $f_{rx} = 0$. Then the boundaries $g_1$ and $g_2$ defined through \eqref{defintion of the functions g_1 and g_2} are continuous.
\end{corollary}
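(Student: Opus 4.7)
The plan is to combine two existing facts: $g_1$ and $g_2$ are nondecreasing by Proposition \ref{prop1:g1g2}-(ii), so their only possible discontinuities are jumps, while Proposition \ref{prop:incrbds} upgrades the monotonicity of $b_1$ and $b_2$ to strict monotonicity. The crucial observation is that strict monotonicity of a nondecreasing function forbids jumps in its generalized inverse, so this combination will directly yield continuity of $g_1, g_2$.

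I will argue by contradiction for $g_1$; the argument for $g_2$ is entirely symmetric. Assume $g_1$ has a jump at some $r_0 \in \R$, so that $g_1(r_0-) < g_1(r_0+)$. I can then select two reals $y_1 < y_2$ with $g_1(r_0-) < y_1 < y_2 < g_1(r_0+)$, which is possible even if the one-sided limits are $\pm \infty$. For any $r < r_0$, monotonicity of $g_1$ gives $g_1(r) \leq g_1(r_0-) < y_1$; by the defining infimum in \eqref{defintion of the functions g_1 and g_2}, some $x < y_1$ satisfies $b_1(x) \geq r$, and by the monotonicity of $b_1$ this yields $b_1(y_1) \geq r$. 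Letting $r \uparrow r_0$ then gives $b_1(y_1) \geq r_0$. Symmetrically, for $r > r_0$, $g_1(r) \geq g_1(r_0+) > y_2$ forces $b_1(x) < r$ for every $x \leq y_2$, so letting $r \downarrow r_0$ delivers $b_1(y_2) \leq r_0$.

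Combining the two inequalities produces $b_1(y_1) \geq r_0 \geq b_1(y_2)$ with $y_1 < y_2$, contradicting the strict monotonicity of $b_1$ from Proposition \ref{prop:incrbds}. The same scheme applied to $g_2(r) = \sup\{x \in \R: b_2(x) \leq r\}$, replacing infima by suprema and the inequalities $\geq r$ by $\leq r$, together with the strict monotonicity of $b_2$, gives continuity of $g_2$. I do not anticipate a genuine obstacle in this proof: the argument is purely order-theoretic, since all the analytic content is already encoded in the strict monotonicity delivered by Proposition \ref{prop:incrbds}. The only point requiring minor verification is that the intermediate points $y_1, y_2$ can always be chosen finite, which is immediate from $g_1(r_0-) < g_1(r_0+)$ in the extended reals.
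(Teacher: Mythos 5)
Your proof is correct and follows the same route the paper takes implicitly: the corollary is stated without a written proof, appealing to the standard fact that a strictly increasing function has a continuous generalized inverse, which is exactly what you verify via the explicit contradiction argument from Proposition \ref{prop:incrbds} and the monotonicity of $g_1,g_2$ from Proposition \ref{prop1:g1g2}-(ii).
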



\subsection{A System of Equations for the Free Boundaries and the Coefficients $A$ and $B$.}
\label{sec:eqbds}

Before proving the main result of this section (i.e.\ Theorem \ref{thm:eqbdsAB} below), we need to introduce some of the characteristics of the process $X^{x,r}$. Recall that $\bar{\mu}(r)=\mu - b r$, $r\in \mathbb{R}$. Then, for an arbitrary $x_o \in \mathbb{R}$, and for any given and fixed $r\in \mathbb{R}$, the scale function density of the process $X^{x,r}$ is defined as
\begin{equation}
\label{Scalefct}
S^{\prime}(x;r):=\exp\left\{-\int_{x_o}^{x} \frac{2\theta(\bar{\mu}(r)-y)}{\eta^2}~\d y\right\}, \quad x \in \mathbb{R},
\end{equation}
while the density of the speed measure is
\begin{equation}
\label{speed}
m^{\prime}(x;r):=\frac{2}{\eta^2 S^{\prime}(x;r)}, \quad x \in \mathbb{R}.
\end{equation}

For later use we also denote by $p$ the transition density of $X^{x,r}$ with respect to the speed measure; then, letting $A \mapsto \P_t(x,A;r)$, $A\in \mathcal{B}(\mathbb{R})$, $t>0$ and $r\in \mathbb{R}$, be the probability of starting at time $0$ from level $x \in \mathbb{R}$ and reaching the set $A \in \mathcal{B}(\mathbb{R})$ in $t$ units of time, we have (cf., e.g., p.\ 13 in \cite{BorodinSalminen})
$$\P_t(x,A;r)=\int_A p(t,x,y;r) m^{\prime}(y;r) \d y.$$
The density $p$ can be taken positive, jointly continuous in all variables and symmetric (i.e.\ $p(t,x,y;r)=p(t,y,x;r)$).
Furthermore, our analysis will involve the Green function $G$ that, for given and fixed $r\in \mathbb{R}$, is defined as (see again \cite{BorodinSalminen}, p.\ 19)
    \begin{equation}
		\label{Green}
		G(x,y;r):= \int_0^{\infty} e^{-\rho t} p(t,x,y;r) \d t = 
        \begin{cases}
        w^{-1} \psi(x-\bar{\mu}(r))\varphi(y-\bar{\mu}(r)) & \text{ for } x \leq y, \\
        w^{-1} \psi(y-\bar{\mu}(r))\varphi(x-\bar{\mu}(r)) & \text{ for } x \geq y,
        \end{cases}
    \end{equation}
where $w$ denotes the Wronskian between $\psi$ and $\varphi$ (normalized by $S'$).

\begin{theorem}
\label{thm:eqbdsAB}
Define $H(x,r):= - \theta b V_x(x,r) + f_r(x,r)$, $(x,r) \in \mathbb{R}^2$. The free boundaries $g_1$ and $g_2$ as in \eqref{defintion of the functions g_1 and g_2}, and the coefficients $A,B\in W^{2;\infty}_{\text{loc}}(\R;\mathbb{R})$ solve the following system of functional and ordinary differential equations
\begin{align}
    0&=\int_{g_2(r)}^{g_1(r)} \psi(y-\bar{\mu}(r))  H(y,r)  m^{\prime}(y;r)~\d y 
    + K \frac{\psi^{\prime}(g_1(r)-\bar{\mu}(r))}{S^{\prime}(g_1(r);r)} 
    + K \frac{\psi^{\prime}(g_2(r)-\bar{\mu}(r))}{S^{\prime}(g_2(r);r)}, \label{Int eq 1} \\
    0&=\int_{g_2(r)}^{g_1(r)} \varphi(y-\bar{\mu}(r))  H(y,r)  m^{\prime}(y;r)~\d y 
    + K \frac{\varphi^{\prime}(g_1(r)-\bar{\mu}(r))}{S^{\prime}(g_1(r);r)} 
    + K \frac{\varphi^{\prime}(g_2(r)-\bar{\mu}(r))}{S^{\prime}(g_2(r);r)}, \label{Int eq 2}
\end{align}
and
\begin{align}
\label{ODEAB1}
0 = & A^{\prime}(r)\psi^{\prime}(g_1(r)-\bar{\mu}(r))+bA(r)\psi^{\prime \prime}(g_1(r)-\bar{\mu}(r)) \nonumber \\
& +B^{\prime}(r)\varphi^{\prime}(g_1(r)-\bar{\mu}(r))+B(r)\varphi^{\prime \prime}(g_1(r)-\bar{\mu}(r))+\widehat{V}_{rx}(g_1(r),r),
\end{align}
\begin{align}
\label{ODEAB2}
0 = & A^{\prime}(r)\psi^{\prime}(g_2(r)-\bar{\mu}(r))+bA(r)\psi^{\prime \prime}(g_2(r)-\bar{\mu}(r)) \nonumber \\
& +B^{\prime}(r)\varphi^{\prime}(g_2(r)-\bar{\mu}(r))+B(r)\varphi^{\prime \prime}(g_2(r)-\bar{\mu}(r))+\widehat{V}_{xr}(g_2(r),r).
\end{align}
\end{theorem}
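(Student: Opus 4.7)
The plan is to treat the two pairs of equations separately since they come from complementary sources. The ODEs \eqref{ODEAB1}--\eqref{ODEAB2} will follow almost immediately from the second-order smooth-fit established in Theorem \ref{prop:2ndSF} combined with the explicit representation of $V$ on $\overline{\mathcal{C}}$ provided by Theorem \ref{Theorem: Structure of the Value function}. The integral equations \eqref{Int eq 1}--\eqref{Int eq 2}, on the other hand, encode the full boundary-value problem \eqref{FBP} solved by $V_r$ through the $\rho$-eigenfunctions $\psi,\varphi$ of $\mathcal{L}^{r}$; I would derive them via a Lagrange-type identity with two integrations by parts in Sturm--Liouville form, or equivalently through the Peskir local-time-space argument promised in the Introduction.

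For the ODEs I would start from the representation $V(x,r)=A(r)\psi(x-\bar{\mu}(r))+B(r)\varphi(x-\bar{\mu}(r))+\widehat{V}(x,r)$ on $\overline{\mathcal{C}}$. Since $A,B\in C^{1,\text{Lip}}_{\text{loc}}((\underline{b}_1,\bar{b}_2);\R)$ by Theorem \ref{Theorem: Structure of the Value function} and $\bar{\mu}'(r)=-b$, differentiating first in $r$ and then in $x$ on the open set $\mathcal{C}$ produces
\[
V_{rx}(x,r)=A'(r)\psi'(x-\bar{\mu}(r))+bA(r)\psi''(x-\bar{\mu}(r))+B'(r)\varphi'(x-\bar{\mu}(r))+bB(r)\varphi''(x-\bar{\mu}(r))+\widehat{V}_{rx}(x,r).
\]
By Theorem \ref{prop:2ndSF} the function $V_{rx}$ is continuous on $\R^{2}$ and identically zero on $\partial\mathcal{C}$; evaluating the preceding identity at $x=g_1(r)$ and at $x=g_2(r)$ then gives exactly \eqref{ODEAB1} and \eqref{ODEAB2}.

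For the integral equations I would exploit the free-boundary problem \eqref{FBP}: for each fixed $r\in\R$, $V_r(\cdot,r)$ solves $(\mathcal{L}^{r}-\rho)V_r(\cdot,r)=-H(\cdot,r)$ on $(g_2(r),g_1(r))$, with $V_r(g_1(r),r)=-K$, $V_r(g_2(r),r)=K$, and $V_{rx}(g_1(r),r)=V_{rx}(g_2(r),r)=0$. Writing the generator in Sturm--Liouville form, $\mathcal{L}^{r}u=(u'/S'(\cdot;r))'/m'(\cdot;r)$, multiplying the ODE for $V_r$ by $\psi(\cdot-\bar{\mu}(r))$, using the $\rho$-eigenfunction identity $(\psi'(\cdot-\bar{\mu}(r))/S'(\cdot;r))'=\rho\psi(\cdot-\bar{\mu}(r))m'(\cdot;r)$, and subtracting $V_r$ times this latter identity produces the exact derivative
\[
\Bigl(\frac{\psi(x-\bar{\mu}(r))V_{rx}(x,r)-\psi'(x-\bar{\mu}(r))V_r(x,r)}{S'(x;r)}\Bigr)'=-\psi(x-\bar{\mu}(r))H(x,r)m'(x;r).
\]
Integrating from $g_2(r)$ to $g_1(r)$ and substituting the value-matching and smooth-fit conditions at the free boundaries yields \eqref{Int eq 1}; repeating the argument with $\varphi$ in place of $\psi$ gives \eqref{Int eq 2}. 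The same identity admits the equivalent fully probabilistic derivation announced in the Introduction: thanks to the continuity of $V_{rx}$ (Theorem \ref{prop:2ndSF}), the Peskir local-time-space formula reduces here to the classical It\^o formula, and applying it to the $\rho$-discounted scale-weighted Wronskian
\[
M_t:=\tfrac{e^{-\rho t}}{S'(X^{x,r}_t;r)}\bigl[\psi(X^{x,r}_t-\bar{\mu}(r))V_{rx}(X^{x,r}_t,r)-\psi'(X^{x,r}_t-\bar{\mu}(r))V_r(X^{x,r}_t,r)\bigr],
\]
then taking expectations between $t=0$ (starting at any $x\in(g_2(r),g_1(r))$) and the exit time from $(g_2(r),g_1(r))$, and combining with the Green-function formula \eqref{Green}, reproduces the same boundary identities.

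The main obstacle will be purely a regularity one: we only know that $V\in W^{2,\infty}_{\text{loc}}(\R^{2};\R)$, so $V_{rxx}(\cdot,r)$ lies only in $L^{\infty}_{\text{loc}}(\R;\R)$ and is not \emph{a priori} continuous. This regularity is however exactly enough to perform the integrations by parts in Sturm--Liouville form (equivalently, to invoke the generalized It\^o/Peskir formula) \emph{provided} that $V_{rx}$ is globally continuous and vanishes across $\partial\mathcal{C}$. Without this feature, both the Lagrange-identity manipulation and the probabilistic argument would pick up additional boundary (local-time) contributions at the free boundaries that would prevent the system from closing. Theorem \ref{prop:2ndSF} is precisely what rules those terms out and makes the whole system \eqref{Int eq 1}--\eqref{ODEAB2} available.
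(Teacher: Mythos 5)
Your proof is correct, and your derivation of \eqref{ODEAB1}--\eqref{ODEAB2} is identical to the paper's: differentiate the representation of $V$ on $\overline{\mathcal{C}}$ and impose $V_{rx}=0$ at $(g_1(r),r)$ and $(g_2(r),r)$. For the integral equations \eqref{Int eq 1}--\eqref{Int eq 2}, however, you take a genuinely different route. You work entirely on the inaction interval $(g_2(r),g_1(r))$, writing the generator in Sturm--Liouville form and using the Lagrange identity $\bigl(S'^{-1}(\psi V_{rx}-\psi'V_r)\bigr)'=-\psi\,H\,m'$, and then you close the computation by using \emph{both} the value-matching $V_r(g_i(r),r)=\mp K$ \emph{and} the smooth-fit $V_{rx}(g_i(r),r)=0$ to evaluate the boundary contributions. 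The paper instead applies the Peskir local-time-space formula to $e^{-\rho s}V_r(X^{x,r}_s,r)$ \emph{on the whole line}, localizes with $\tau_n=\inf\{t:|X^{x,r}_t|\geq n\}$, sends $n\to\infty$, and arrives at a Green-function representation of $V_r(x,r)$ involving all three regions (the terms $I_1$, $I_2$, $I_3$ coming from $\mathcal{C}$, $\mathcal{I}$, $\mathcal{D}$); it then imposes \emph{only} the value-matching $V_r(g_i(r),r)=\mp K$, and uses the identities $\psi'/S'=\rho\int_{-\infty}^{\cdot}\psi\,m'$ and $\varphi'/S'=-\rho\int_{\cdot}^{\infty}\varphi\,m'$ to simplify. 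The second-order smooth fit enters the paper's argument only implicitly, as the regularity hypothesis that makes the local-time terms of Peskir's formula vanish, whereas in your approach it enters explicitly through the evaluation of the Wronskian-type boundary terms. Both routes are legitimate: yours is shorter, purely analytic, and avoids the localization and the explicit computation of $I_2,I_3$; the paper's is the advertised ``fully probabilistic'' route and yields, as a by-product, the integral representation of $V_r$ over the entire line. One small remark: the regularity concern you raise at the end is handled by Proposition \ref{Lemma: V is viscosity solution inside the continuation region}, which gives $V(\cdot,r)\in C^{3,\text{Lip}}_{\text{loc}}$ in the open interval $(g_2(r),g_1(r))$; integrating by parts on $[g_2(r)+\varepsilon,g_1(r)-\varepsilon]$ and letting $\varepsilon\downarrow 0$, using the continuity of $V_r$ and $V_{rx}$ up to the boundary, is all that is needed.
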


\begin{proof}
Fix $(x,r) \in \mathbb{R}^2$, and, for $n\in \mathbb{N}$, set $\tau_n:=\inf\{t \geq 0: |X^{x,r}_t| \geq n\}$, $n\in \mathbb{N}$. Propositions \ref{prop:Vprelim} and \ref{prop:2ndSF} guarantee that $V_r$ and $V_{rx}$ are continuous functions on $\mathbb{R}^2$. Moreover, direct calculations on \eqref{Structure of V} yield that $V_{rxx} \in L^{\infty}_{\text{loc}}(\mathbb{R}^2)$, upon recalling that $A,B \in W^{2,\infty}_{\text{loc}}(\R;\mathbb{R})$. Such a regularity of $V_r$ allows us to apply the local time-space calculus of \cite{Peskir2003} to the process $(e^{-\rho s}V_r(X^{x,r}_s, r))_{s\geq0}$ on the time interval $[0,\tau_n]$, take expectations (so that the term involving the stochastic integral vanishes) and obtain
\begin{align}
\label{Ito1}
    \E\Big[e^{-\rho \tau_n} V_r(X^{x,r}_{\tau_n},r)\Big]
    &=V_r(x,r)+\E\bigg[\int_0^{\tau_n}e^{-\rho s} \big[(\mathcal{L}^r-\rho)V_r(\cdot,r)\big](X^{x,r}_s) ~ \mathds{1}_{\{X^{x,r}_s \neq g_1(r)\}} \mathds{1}_{\{X^{x,r}_s \neq g_2(r)\}} ~\d s \bigg] \nonumber \\
    &=V_r(x,r)+\E\bigg[\int_0^{\tau_n} e^{-\rho s} \big(\theta b V_x(X^{x,r}_s,r) - f_r(X^{x,r}_s,r)\big) \mathds{1}_{\{ g_2(r) < X^{x,r}_s < g_1(r) \}}~\d s\bigg] 
		\nonumber \\
    &+\E\bigg[\int_0^{\tau_n}\rho K e^{-\rho s}  \mathds{1}_{\{ X^{x,r}_s > g_1(r) \}}~\d s -\int_0^{\tau_n}\rho K e^{-\rho s}  \mathds{1}_{\{ X^{x,r}_s < g_2(r) \}}~\d s \bigg].
\end{align}
Notice now that $\P(X^{x,r}_s = g_1(r))=\P(X^{x,r}_s = g_2(r))=0$, $s>0$, for any $(x,r)\in \mathbb{R}^2$ so that we can write from \eqref{Ito1} that
\begin{align}
\label{Ito2}
    V_r(x,r)
		&=\E\Big[e^{-\rho \tau_n} V_r(X^{x,r}_{\tau_n},r)\Big] - \E\bigg[\int_0^{\tau_n} e^{-\rho s} \big(\theta b V_x(X^{x,r}_s,r) - f_r(X^{x,r}_s,r)\big) \mathds{1}_{\{(X^{x,r}_s,r) \in \mathcal{C}\}}~\d s\bigg] 
		\nonumber \\
    & - \E\bigg[\int_0^{\tau_n}\rho K e^{-\rho s} \mathds{1}_{\{(X^{x,r}_s,r) \in \mathcal{I}\}}~\d s  + \int_0^{\tau_n}\rho K e^{-\rho s}  \mathds{1}_{\{(X^{x,r}_s,r) \in \mathcal{D}\}}~\d s \bigg].
\end{align}

We now aim at taking limits as $n\uparrow \infty$ in the right-hand side of the latter. To this end notice that $\tau_n \uparrow \infty$ a.s.\ when $n\uparrow \infty$, and therefore $\lim_{n\uparrow \infty} \E[e^{-\rho \tau_n} V_r(X^{x,r}_{\tau_n},r)] =0$ since $V_r \in [-K,K]$. Also, recalling \eqref{explicit form of  the controlled X}, Proposition \ref{prop:Vprelim}-(ii), and using standard estimates based on Burkholder-Davis-Gundy's inequality, one has
$$\E\bigg[\int_0^{\infty} e^{-\rho s} \big(\theta b |V_x(X^{x,r}_s,r)| + |f_r(X^{x,r}_s,r)|\big)~\d s\bigg] < + \infty.$$
Hence, thanks to the previous observations we can take limits as $n \uparrow \infty$, invoke the dominated convergence theorem, and obtain from \eqref{Ito2} that 
\begin{align}
\label{int equation for V_r}
    V_r(x,r)
    &=\E\bigg[ \int_0^{\infty}e^{-\rho s} H(X_s,r) \mathds{1}_{\{ (X^{x,r}_s,r)\in \mathcal{C} \}}~\d s \bigg]\nonumber \\
    &- \E\bigg[\int_0^{\infty}\rho K e^{-\rho s}  \mathds{1}_{\{ (X^{x,r}_s,r)\in \mathcal{I} \}}~\d s\bigg] + \E\bigg[ \int_0^{\infty} \rho K e^{-\rho s} \mathds{1}_{\{ (X^{x,r}_s,r)\in \mathcal{D} \}}~\d s \bigg] \nonumber \\
    &=: I_1(x,r)- I_2(x,r)+ I_3(x,r).
\end{align}

With the help of the Green function \eqref{Green} and Fubini's theorem, we can now rewrite each $I_i$, $i=1,2,3$, so to find
\begin{align}
\label{I1}
    I_1(x;r)
    &=\E\bigg[ \int_0^{\infty}e^{-\rho s}H(X_s,r) \mathds{1}_{\{ g_2(r) < X_s^{x,r} < g_1(r) \}}~\d s \bigg] \nonumber \\
    &= \int_0^{\infty} e^{-\rho s} \Big( \int_{-\infty}^{\infty} H(y,r) \mathds{1}_{\{ g_2(r) < y < g_1(r) \}} p(s,x,y;r) m^{\prime}(y;r)~\d y\Big) \d s  \nonumber \\
    &=\int_{-\infty}^{\infty} G(x,y;r)  H(y,r) \mathds{1}_{\{ g_2(r) < y < g_1(r) \}} m^{\prime}(y;r)~\d y \\
    &= \frac{1}{w} \varphi(x-\bar{\mu}(r))\int_{-\infty}^{x} \psi(y-\bar{\mu}(r))  H(y,r) \mathds{1}_{\{ g_2(r) < y < g_1(r) \}} m^{\prime}(y;r)~\d y \nonumber \\
    &+ \frac{1}{w} \psi(x-\bar{\mu}(r))\int_{x}^{\infty} \varphi(y-\bar{\mu}(r))  H(y,r) \mathds{1}_{\{ g_2(r) < y < g_1(r) \}} m^{\prime}(y;r)~\d y, \nonumber 
\end{align}

\begin{align}
\label{I2}
    I_2(x;r)
    &= \E\bigg[\int_0^{\infty} \rho K e^{-\rho s}  \mathds{1}_{\{(X_s,r)\in \mathcal{I} \}}~\d s\bigg] \nonumber \\
    &= \rho K \int_0^{\infty}e^{-\rho s} \Big(\int_{-\infty}^{\infty} p(s,x,y;r)  \mathds{1}_{\{ y \geq g_1(r) \}}m^{\prime}(y;r)~\d y\Big) \d s \nonumber \\
    &= \rho K \int_{-\infty}^{\infty} G(x,y;r)  \mathds{1}_{\{ y \geq g_1(r) \}}m^{\prime}(y;r) ~\d y  \\
    &= \frac{1}{w}\rho K \varphi(x-\bar{\mu}(r)) \int_{-\infty}^{x} \psi(y-\bar{\mu}(r))  \mathds{1}_{\{ y \geq g_1(r) \}}m^{\prime}(y;r) ~\d y \nonumber \\
    &+ \frac{1}{w}\rho K \psi(x-\bar{\mu}(r)) \int_{x}^{\infty} \varphi(y-\bar{\mu}(r))  \mathds{1}_{\{ y \geq g_1(r) \}}m^{\prime}(y;r) ~\d y,  \nonumber
\end{align}
and, similarly,
\begin{align}
\label{I3}
    I_3(x;r)
    &= \E\bigg[\int_0^{\infty}\rho K e^{-\rho s}  \mathds{1}_{\{ (X_s,r)\in \mathcal{D} \}}~\d s\bigg] \nonumber \\
    &= \frac{1}{w}\rho K \varphi(x-\bar{\mu}(r)) \int_{-\infty}^{x} \psi(y-\bar{\mu}(r))  \mathds{1}_{\{ y \leq g_2(r) \}}m^{\prime}(y;r) ~\d y  \\
    &+ \frac{1}{w}\rho K \psi(x-\bar{\mu}(r)) \int_{x}^{\infty} \varphi(y-\bar{\mu}(r))  \mathds{1}_{\{ y \leq g_2(r) \}}m^{\prime}(y;r) ~\d y. \nonumber
\end{align}

Now, by plugging \eqref{I1}, \eqref{I2}, and \eqref{I3} into \eqref{int equation for V_r}, and then by imposing that $V_r(g_1(r),r) = -K$ and $V_r(g_2(r),r)=K$, we obtain the two equations
$$-K=\frac{1}{w} \varphi(g_1(r)-\bar{\mu}(r))\int_{g_2(r)}^{g_1(r)} \psi(y-\bar{\mu}(r))  H(y,r)  m^{\prime}(y)~dy -I_2(g_1(r);r)+I_3(g_1(r);r)$$
and
$$K=\frac{1}{w} \psi(g_2(r)-\bar{\mu}(r))\int_{g_2(r)}^{g_1(r)} \varphi(y-\bar{\mu}(r))  H(y,r)  m^{\prime}(y)~dy -I_2(g_2(r);r)+I_3(g_2(r);r).$$
Finally, rearranging terms and using the fact that (cf.\ Chapter II in \cite{BorodinSalminen})
$$\frac{\psi^{\prime}(\cdot - \bar{\mu}(r))}{S^{\prime}(\cdot;r)}=\rho \int_{-\infty}^{\cdot} \psi(y- \bar{\mu}(r))m^{\prime}(y;r)~\d y$$
and
$$\frac{\varphi^{\prime}(\cdot - \bar{\mu}(r))}{S^{\prime}(\cdot;r)}=-\rho \int_{\cdot}^{\infty} \varphi(y- \bar{\mu}(r))m^{\prime}(y;r)~\d y,$$
yield \eqref{Int eq 1} and \eqref{Int eq 2}.

Equations \eqref{Int eq 1} and \eqref{Int eq 2} involve the coefficients $A(r)$ and $B(r)$ through the function $H$ since $V_x(x,r)=A(r)\psi^{\prime}(x-\bar{\mu}(r))+B(r)\varphi^{\prime}(x-\bar{\mu}(r))+\widehat{V}_x(x,r),$ for any $g_2(r) < x < g_1(r)$, by \eqref{Structure of V}. In order to obtain equations for $A$ and $B$, we use \eqref{Structure of V} together with the second-order smooth-fit principle $V_{rx}(g_1(r),r)=V_{rx}(g_2(r),r)=0$, and we find that, given the boundary functions $g_1$ and $g_2$, $A$ and $B$ solve the system of ODEs \eqref{ODEAB1} and \eqref{ODEAB2}.
\end{proof}

\subsubsection{Some comments on Theorem \ref{thm:eqbdsAB}}
\label{sec:commentsInteq}

Notice that equations \eqref{Int eq 1} and \eqref{Int eq 2} are consistent with those obtained in Proposition 5.5 of \cite{Federico2014}; in particular, one obtains, as a special case, those in Proposition 5.5 of \cite{Federico2014} by taking $b=0$ in ours \eqref{Int eq 1} and \eqref{Int eq 2}. However, the nature of our equations is different. While the equations in \cite{Federico2014} are algebraic, ours \eqref{Int eq 1} and \eqref{Int eq 2} are functional. Indeed, from \eqref{ODEAB1} and \eqref{ODEAB2} we see that $A$ and $B$ depend on the whole boundaries $g_1$ and $g_2$ (and not only on the points $g_1(r)$ and $g_2(r)$, for a fixed $r\in \mathbb{R}$), so that, once those coefficients are substituted into the expression for $V_{x}$, they give rise to a functional nature of \eqref{Int eq 1} and \eqref{Int eq 2}. 

In contrast to the lengthy analytic approach followed in \cite{Federico2014}, Equations \eqref{Int eq 1} and \eqref{Int eq 2} are derived via simple and handy probabilistic means using It\^o's formula and properties of linear regular diffusions. We believe that this different approach has also a methodological value. Indeed, if we would have tried to derive equations for the free boundaries imposing the continuity of $V_r$ and $V_{rx}$ at the points $(g_1(r),r)$ and $(g_2(r),r)$, $r\in \mathbb{R}$, we would have ended up with a system of complex and unhandy (algebraic and differential) equations from which it would have been difficult to observe their consistency with Proposition 5.5 of \cite{Federico2014}. In the spirit of \cite{Alvarez08} (see also \cite{Salminen85}), we also would like to mention that \eqref{Int eq 1} and \eqref{Int eq 2} can be seen as optimality conditions in terms of an integral representation based on the minimal $r$-harmonic mappings $\psi$ and $\varphi$ for the underlying diffusion $X^{x,r}$. As such, those equations could have been alternatively derived by applying the analytic representation of $r$-potentials obtained in Corollary 4.5 of \cite{LambertonZervos}.

In Theorem \ref{thm:eqbdsAB} we provide equations for the free boundaries $g_1$ and $g_2$ and for the coefficients $A$, and $B$, but we do prove uniqueness of the solution to \eqref{Int eq 1}, \eqref{Int eq 2}, \eqref{ODEAB1} and \eqref{ODEAB2}. We admit that we do not know how to establish such a uniqueness claim. Also, even if we would have uniqueness (given $g_1$ and $g_2$) of the solution to the system of ODEs \eqref{ODEAB1} and \eqref{ODEAB2}, the complexity of functional equations \eqref{Int eq 1} and \eqref{Int eq 2} is such that a proof of the uniqueness of their solution seems far to being trivial. A study of this point thus deserves a separate careful analysis that we leave for future research.


\section{On the Optimal Control}
\label{sec:OC}

Existence of an optimal control for problem \eqref{definition of V} can be shown relying on (a suitable version of) Koml\'os' theorem, by following arguments similar to those employed in the proof of Proposition 3.4 in \cite{Federico2014} (see also Theorem 3.3 in \cite{KW}). In fact, one also has uniqueness of the optimal control if the running cost function is strictly convex. 
In this section we investigate the structure of the optimal control by relating it to the solution to a Skorokhod reflection problem at $\partial \mathcal{C}$. We then discuss conditions under which such a reflection problem admits a solution.

\begin{problem}
\label{prob:Sk}
Let $(x,r) \in \overline{\mathcal{C}}$ be given and fixed. Find a process $\widehat{\xi} \in \mathcal{A}$ such that $\widehat{\xi}_{0^-}=0$ a.s.\ and, letting $(\widehat{X}^{x,r}_t, \widehat{R}^{r}_t)_{t\geq0} := (X^{x,r,\widehat{\xi}}_t, R^{r,\widehat{\xi}}_t)_{t\geq0}$ and denoting by $(\widehat{\xi}^+_t,\widehat{\xi}^{-}_t)_{t\geq0}$ its minimal decomposition, we have
\begin{equation}
\label{refl1}
(\widehat{X}^{x,r}_t, \widehat{R}^{r}_t) \in \overline{\mathcal{C}} \quad \text{for all}\,\,t\geq0,\quad \P-\text{a.s.}
\end{equation}
and
\begin{equation}
\label{refl2}
\widehat{\xi}^+_t = \int_{(0,t]} \mathds{1}_{\{\widehat{X}^{x,r}_s, \widehat{R}^{r}_s) \in \mathcal{I}\}} \d \widehat{\xi}^+_s, \qquad \widehat{\xi}^-_t = \int_{(0,t]} \mathds{1}_{\{\widehat{X}^{x,r}_s, \widehat{R}^{r}_s) \in \mathcal{D}\}} \d \widehat{\xi}^-_s.
\end{equation}
\end{problem}

The next theorem shows that a solution to Problem \ref{prob:Sk} (if it does exists) provides an optimal control.

\begin{theorem}
\label{thm:verifOC}
Let $(x,r) \in \mathbb{R}^2$ and suppose that a solution $\widehat{\xi}=\widehat{\xi}^+ -\widehat{\xi}^-$ to Problem \ref{prob:Sk} exists. Define the process $\xi^{\star}:=\xi_t^{\star,+}-\xi_t^{\star,-}$, $t\geq0$, where
\begin{equation}
\label{eq:optcontr}
\xi_t^{\star,+} := \widehat{\xi}^+_t + (x-g_1(r))^+, \qquad \xi_t^{\star,-} := \widehat{\xi}^-_t + (g_2(r)-x)^+, \quad \text{for all}\,\,t \geq0,
\end{equation}
and with $\xi^{\star}_{0^-}=0$ a.s.
Then $\xi^{\star}$ is optimal for problem \eqref{definition of V}. Moreover, if $f$ is strictly convex, it is the unique optimal control.
\end{theorem}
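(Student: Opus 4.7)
The plan is a standard verification argument, tailored to the low regularity of $V$. The key ingredient is that, by Remark \ref{rem:VS}, the value function $V\in W^{2,\infty}_{\text{loc}}(\R^2;\R)$ is a \emph{strong solution} to the variational inequality \eqref{HJB system}: $-K\leq V_r\leq K$ on $\R^2$, $(\rho-\mathcal{L}^r)V\leq f$ a.e.\ on $\R^2$, and $(\rho-\mathcal{L}^r)V = f$ on $\mathcal{C}$ (see also Proposition \ref{Lemma: V is viscosity solution inside the continuation region}).

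For an arbitrary admissible $\xi\in\mathcal{A}$, I would apply the local-time-space calculus of \cite{Peskir2003} (as in the proof of Theorem \ref{thm:eqbdsAB}) to $t\mapsto e^{-\rho t}V(X^{x,r,\xi}_t,R^{r,\xi}_t)$, after localization with $\tau_n := \inf\{t\geq 0: |X^{x,r,\xi}_t|+|R^{r,\xi}_t|\geq n\}$, and decompose $\xi = \xi^+-\xi^-$ into continuous and purely-jump parts. Taking expectations and using $(\rho-\mathcal{L}^{R_t})V\leq f$ on the drift, $-K\leq V_r\leq K$ on the continuous component of $\xi$, and the fundamental theorem of calculus $V(X_t,R_t)-V(X_t,R_{t-})=\int_{R_{t-}}^{R_t}V_r(X_t,\rho)\,\d\rho$ on the jumps, one obtains
\[
V(x,r)\leq \E\bigl[e^{-\rho T}V(X^{x,r,\xi}_T,R^{r,\xi}_T)\bigr] + \E\Bigl[\int_0^T e^{-\rho t}f(X^{x,r,\xi}_t,R^{r,\xi}_t)\,\d t\Bigr] + \E\Bigl[\int_{[0,T]} e^{-\rho t}K\,\d|\xi|_t\Bigr].
\]
The polynomial growth of $V$ (Proposition \ref{prop:Vprelim}-(i)), combined with standard $L^p$-estimates on $X^{x,r,\xi}$, yields the transversality $\lim_T\E[e^{-\rho T}V(X^{x,r,\xi}_T,R^{r,\xi}_T)]=0$; letting $T\to\infty$ then gives $V(x,r)\leq \mathcal{J}(x,r;\xi)$.

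To obtain the reverse inequality at $\xi=\xi^\star$, I would check that each of the previous inequalities becomes an equality. The initial lump-sum $\Delta\xi^\star_0$ displaces only the $R$-coordinate and moves $(x,r)$ entirely within $\overline{\mathcal{I}}$ (when $x\geq g_1(r)$) or $\overline{\mathcal{D}}$ (when $x\leq g_2(r)$); since $V_r\equiv -K$ on $\mathcal{I}$ and $V_r\equiv K$ on $\mathcal{D}$, the cost of this jump exactly matches the associated drop of $V$. For $t>0$ the reflected process $(\widehat{X}^{x,r},\widehat{R}^{r})$ stays in $\overline{\mathcal{C}}$ by \eqref{refl1}, where $(\rho-\mathcal{L}^{R_t})V=f$; meanwhile \eqref{refl2} forces the supports of $\d\widehat{\xi}^+$ and $\d\widehat{\xi}^-$ into $\mathcal{I}$ and $\mathcal{D}$ respectively, so that $V_r\,\d\widehat{\xi}^+ = -K\,\d\widehat{\xi}^+$ and $-V_r\,\d\widehat{\xi}^- = -K\,\d\widehat{\xi}^-$. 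Every inequality therefore becomes an equality, yielding $V(x,r)=\mathcal{J}(x,r;\xi^\star)$. Uniqueness under strict convexity of $f$ then follows from the linearity of $\xi\mapsto (X^{x,r,\xi},R^{r,\xi})$ (cf.\ \eqref{explicit form of  the controlled X}, \eqref{definition of r}): the map $\xi\mapsto \mathcal{J}(x,r;\xi)$ inherits strict convexity and hence admits at most one minimizer.

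The main obstacles I foresee are: (a) a rigorous justification of the It\^o/Peskir formula for $V\in W^{2,\infty}_{\text{loc}}(\R^2;\R)$ together with a general BV control $\xi$ — handled either by direct appeal to the calculus of \cite{Peskir2003} or by a smooth approximation of $V$; (b) establishing the transversality condition, which requires uniform moment bounds on $(X^{x,r,\xi},R^{r,\xi})$ valid along finite-cost controls — these follow from Gronwall-type estimates, using that $\E\int_0^\infty e^{-\rho t}\d|\xi|_t<\infty$ for any $\xi$ with $\mathcal{J}(x,r;\xi)<\infty$; and (c) verifying that the initial jump in \eqref{eq:optcontr} effectively maps $(x,r)$ into $\overline{\mathcal{C}}$ while traversing only the closed action region $\overline{\mathcal{I}}$ or $\overline{\mathcal{D}}$ — a geometric statement on $b_{1,2}$ and $g_{1,2}$ relying on Proposition \ref{prop1:g1g2}.
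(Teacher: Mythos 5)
Your argument is correct and reaches the right conclusion, but it takes a detour. The entire first half --- showing $V(x,r)\leq\mathcal{J}(x,r;\xi)$ for every admissible $\xi$ by means of the strong-solution property of $V$ and Peskir's local-time-space calculus --- is redundant: by definition \eqref{definition of V}, $V$ is the infimum of $\mathcal{J}(x,r;\cdot)$ over $\mathcal{A}$, so $V(x,r)\leq\mathcal{J}(x,r;\xi^{\star})$ holds tautologically as soon as $\xi^\star$ is admissible. The only content to establish is the reverse inequality $V(x,r)\geq\mathcal{J}(x,r;\xi^\star)$, and for this the paper's proof is substantially lighter than what you sketch. Since by construction the controlled pair $(X^{x,r,\xi^\star},R^{r,\xi^\star})$ lives in $\overline{\mathcal{C}}$ for all $t\geq0$ a.s., where Theorem \ref{Theorem: Structure of the Value function} gives $V\in C^{2,1}(\overline{\mathcal{C}};\R)$, one may apply the ordinary It\^o formula for jump semimartingales on $[0,\tau_n]$ --- no local-time-space calculus or smooth approximation is needed, so your obstacle (a) disappears. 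Moreover, instead of proving a transversality condition (your obstacle (b)), the paper simply discards the nonnegative term $\E[e^{-\rho\tau_n}V(X^{x,r,\xi^\star}_{\tau_n},R^{r,\xi^\star}_{\tau_n})]$ using $V\geq0$, and then passes to the limit $n\uparrow\infty$ by monotone convergence, which is immediate since $f\geq0$ and $K>0$. The remaining ingredients in your sketch --- the identity $(\mathcal{L}^r-\rho)V=-f$ on $\overline{\mathcal{C}}$, the conversion of $V_r\,\d\xi^{\star,\pm}$ into $\mp K\,\d\xi^{\star,\pm}$ via the support conditions \eqref{refl2} together with $V_r\equiv -K$ on $\mathcal{I}$ and $V_r\equiv K$ on $\mathcal{D}$, and the strict convexity of $\xi\mapsto\mathcal{J}(x,r;\xi)$ through the affine dependence of $(X^{x,r,\xi},R^{r,\xi})$ on $\xi$ for uniqueness --- coincide with the paper's.
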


\begin{proof}
Being the process $\xi^{\star}$ clearly admissible, it is enough to show that 
\begin{equation}
\label{claimV}
V(x,r) \geq \E\bigg[ \int_0^{\infty} e^{-\rho t} f(X_t^{x,r,\xi^{\star}}, R_t^{r,\xi^{\star}}) \d t + \int_0^{\infty} e^{-\rho t}K \d\xi_t^{\star,+} + \int_0^{\infty} e^{-\rho t}K \d\xi_t^{\star,-} \bigg].
\end{equation}
To accomplish that, let $(K_n)_{n\in\mathbb{N}}$ be an increasing sequence of compact subsets such that $\bigcup_{n \in \mathbb{N}}K_n=\mathbb{R}^2$, and for any given $n\geq1$, define the bounded stopping time $\tau_n:=\inf\{t \geq 0: (X_t^{x,r,\xi^{\star}},R_t^{r,\xi^{\star}}) \not\in K_n \} \wedge n $. We already know by Theorem \ref{Theorem: Structure of the Value function} that $V \in C^{2,1}(\bar{\mathcal{C}};\mathbb{R})$; moreover, by construction, the process $\xi^{\star}$ is that $(X_t^{x,r,\xi^{\star}},R_t^{r,\xi^{\star}}) \in \bar{\mathcal{C}}$ for all $t\geq0$ a.s. Hence, we can apply It\^{o}'s formula on the (stochastic) time interval $[0,\tau_n$] to the process $(e^{-\rho t} V(X_{t}^{x,r,\xi^{\star}}, R_t^{r,\xi^{\star}}))_{t\geq0}$, take expectations, and obtain (upon noticing that the expectation of the resulting stochastic integral vanishes due to the continuity of $V_x$) 
\begin{align}
\label{eq1 verification}
V(x,r) & =\E\bigg[e^{-\rho \tau_n} V(X_{\tau_n}^{x,r,\xi^{\star}},R_{\tau_n}^{r,\xi^{\star}}) \bigg]
 - \E\bigg[ \int_0^{\tau_n} e^{-\rho t} [(\mathcal{L}^r-\rho)V(\cdot,R_t^{r,\xi^{\star}})](X_t^{x,r,\xi^{\star}})~\d t \bigg] \nonumber \\
& - \E\bigg[ \int_0^{\tau_n} e^{-\rho t}V_r(X_t^{x,r,\xi^{\star}},R_t^{r,\xi^{\star}})~\d\xi^{\star, \text{c}}_t \bigg] \nonumber \\
& - \E\bigg[ \sum_{0 \leq t \leq \tau_n}  e^{-\rho t} \left( V(X_t^{x,r,\xi^{\star}},R_t^{r,\xi^{\star}})-V(X_{t}^{x,r,\xi^{\star}},R_{t^-}^{r,\xi^{\star}}) \right) \bigg]. 
\end{align}
Here $\xi^{\star, \text{c}}$ denotes the continuous part of $\xi^{\star}$.
Notice now that 
$$[(\mathcal{L}^r-\rho)V(\cdot,R_t^{r,\xi^{\star}})](X_t^{x,r,\xi^{\star}})=-f(X_t^{x,r,\xi^{\star}}, R_t^{r,\xi^{\star}})$$ due to Proposition \ref{Lemma: V is viscosity solution inside the continuation region}-(i) and the fact that $V\in C^{2,1}(\bar{\mathcal{C}};\mathbb{R})$ by Theorem \ref{Theorem: Structure of the Value function}. Therefore, 
\begin{equation}
\label{eq2 verification}
\E\bigg[\int_0^{\tau_n} e^{-\rho t}[(\mathcal{L}^r-\rho)V(\cdot, R_t^{r,\xi^{\star}})](X_t^{x,r,\xi^{\star}})~\d t \bigg]= - \E\bigg[ \int_0^{\tau_n} e^{-\rho t}f(X_t^{x,r,\xi^{\star}}, R_t^{r,\xi^{\star}})~\d t \bigg].
\end{equation}

Letting $\Delta \xi^{\star,\pm}_t:=\xi^{\star,\pm}_t - \xi^{\star,\pm}_{t^-}$, $t\geq0$, notice now that 
\begin{eqnarray}
\label{eq2bis verification}
& \displaystyle V(X_t^{x,r,\xi^{\star}},R_t^{r,\xi^{\star}})-V(X_{t}^{x,r,\xi^{\star}},R_{t^-}^{r,\xi^{\star}}) = \mathds{1}_{\{\Delta \xi^{\star,+}_t>0\}}\int_0^{\Delta \xi^{\star,+}_t} V_r(X_{t}^{x,r,\xi^{\star}},R_{t^-}^{r,\xi^{\star}} + u) \d u \nonumber \\
& \displaystyle - \mathds{1}_{\{\Delta \xi^{\star,-}_t>0\}}\int_0^{\Delta \xi^{\star,-}_t} V_r(X_{t}^{x,r,\xi^{\star}},R_{t^-}^{r,\xi^{\star}} - u) \d u.
\end{eqnarray}
Since the support of the (random) measure induced on $\R_+$ by $\xi^{\star,+}$ is $\mathcal{I}$, and that of (random) the measure induced on $\R_+$ by $\xi^{\star,-}$ is $\mathcal{D}$, and $V_r=-K$ on $\mathcal{I}$ and $V_r=K$ on $\mathcal{D}$, we therefore conclude by using \eqref{eq2bis verification} that
\begin{eqnarray}
\label{eq3 verification}
& \displaystyle \E\bigg[ \int_0^{\tau_n} e^{-\rho t}V_r(X_t^{x,r,\xi^{\star}},R_t^{r,\xi^{\star}})~\d\xi^{\star, \text{c}}_t + \sum_{0 \leq t \leq \tau_n}  e^{-\rho t} \left( V(X_t^{x,r,\xi^{\star}},R_t^{r,\xi^{\star}})-V(X_{t}^{x,r,\xi^{\star}},R_{t^-}^{r,\xi^{\star}}) \right) \bigg] \nonumber \\
& = - \displaystyle \E\bigg[ \int_0^{\tau_n} e^{-\rho t}\Big(K~\d\xi^{\star,+}_t + K~\d\xi^{\star,-}_t\Big) \bigg].
\end{eqnarray}

Then using \eqref{eq2 verification} and \eqref{eq3 verification} in \eqref{eq1 verification}, we obtain
\begin{equation}
\label{eq: V bigger equal the optimal startegy}
V(x,r) \geq \E\bigg[\int_0^{\tau_n} e^{-\rho t}f(X_t^{x,r,\xi^{\star}}, R_t^{r,\xi^{\star}})~\d t + \int_0^{\tau_n} e^{-\rho t} K~d\xi^{\star,+}_t + \int_0^{\tau_n} e^{-\rho t} K~d\xi^{\star,-}_t \bigg],
\end{equation}
where the nonnegativity of $V$ has also been employed. Taking now limits as $n\uparrow \infty$ in the right-hand side of the latter, and invoking the monotone convergence theorem (due to nonnegativity of $f$ and of $K$) we obtain \eqref{claimV}.

Finally, uniqueness of the optimal control can be shown thanks to the strict convexity of $f$ by arguing as in the proof of Proposition 3.4 in the Appendix A of \cite{Federico2014}.
\end{proof}

\subsection{Construction of the Optimal Control under Additional Conditions on $f$ and Further Comments}
\label{sec:commentsOC}

The optimal control prescribes that the level of the process $R$ should be adjusted (via impulses and singularly continuous actions) in order to keep at each instant of time the joint process $(X^{x,r,\xi^{\star}}_t,R^{r,\xi^{\star}}_t)_{t\geq0}$ within the endogenously determined region $\{(x,r)\in \R^2:\, g_2(r) \leq x \leq g_1(r)\}$. Such a policy should be minimal, in the sense that only the minimal effort to accomplish such a task should be undertaken (cf.\ \eqref{refl1} and \eqref{refl2}).

\vspace{0.1cm}

A key question is now: does a solution to Problem \ref{prob:Sk} exist?
\vspace{0.1cm}

Existence of a solution to Problem \ref{prob:Sk} is per se an interesting and not trivial question. It is well known that in multi-dimensional settings the possibility of constructing a reflected diffusion at the boundary of a given domain strongly depends on the smoothness of the reflection boundary itself; sufficient conditions can be found in the early papers \cite{DupuisIshii} and  \cite{LS}. Unfortunately, our information on the boundary of the inaction region $\partial \mathcal{C}$ do not suffice to apply the results of the aforementioned works. In particular, even in the case in which $g_1$ and $g_2$ are continuous (equivalently, $b_1$ and $b_2$ are strictly increasing; see Proposition \ref{prop:incrbds} and Corollary \ref{cor:contg12}), we are not able to exclude horizontal segments of the free boundaries $g_1$ and $g_2$ (cf.\ Case (1) and Case (2) in \cite{DupuisIshii}). An alternative and more constructive way of obtaining a solution to Problem \ref{prob:Sk} is the one followed in \cite{ChiaHauss00}, where the needed reflected diffusion is constructed (weakly) by means of a Girsanov's transformation of probability measures (see Section 5 in \cite{ChiaHauss00}). The next proposition shows that this possible also in our problem when $f$ satisfies suitable additional requirements.

\begin{proposition}
\label{prop:bbounded-rate}
Suppose that there exists $C>0$ such that $|f_x|\leq C$, and that $f_r(x,r)=\beta(r)$, for some strictly increasing function $\beta:\R \to \R$ such that $\lim_{r\to\pm\infty}\beta(r)=\pm \infty$. Then there exists a weak solution (in the sense of weak solutions to SDEs) to Problem \ref{prob:Sk}.
\end{proposition}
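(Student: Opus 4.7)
The plan is to follow the Girsanov-based strategy of Section~5 of \cite{ChiaHauss00}. The first step is to show that both free boundaries are bounded in the $r$-direction, so that the reflected process automatically stays in a bounded strip. Using $|f_x|\le C$ together with the linear representation \eqref{explicit form of  the controlled X}, for every admissible $\xi$ one has
$$|\mathcal J(x,r;\xi)-\mathcal J(x',r;\xi)|\le C\,\E\Big[\int_0^\infty e^{-\rho t}|X_t^{x,r,\xi}-X_t^{x',r,\xi}|\,\d t\Big]=\frac{C|x-x'|}{\rho+\theta}$$
since the intervention costs are $x$-independent. Passing to the infimum in $\xi$ gives $|V_x|\le M:=C/(\rho+\theta)$ a.e.\ on $\R^2$. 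Plugging this bound into the Dynkin-game functional \eqref{value function of the Dynkin game} with the suboptimal choice $\tau\equiv\infty$ for Player~2, for every $\sigma\in\mathcal T$ one obtains
$$\sup_{\tau\in\mathcal T}\Psi(\sigma,\tau;x,r)\ge\Psi(\sigma,\infty;x,r)\ge (\beta(r)-\theta b M)\frac{1-\E[e^{-\rho\sigma}]}{\rho}+K\,\E[e^{-\rho\sigma}]\ge K$$
whenever $\beta(r)\ge\theta b M+\rho K$. Since $\beta\to\infty$ at $+\infty$, there exists $\bar r<\infty$ with $V_r(x,r)=K$ for every $x\in\R$ and every $r\ge\bar r$, so $\bar b_2\le\bar r$. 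The analogous argument, using $V_x\ge -M$ and $\beta\to-\infty$ at $-\infty$, gives $\underline b_1\ge\underline r$ for some $\underline r>-\infty$. In particular, $\overline{\mathcal C}\subseteq\R\times[\underline r,\bar r]$.

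Next, on a reference filtered probability space $(\hat\Omega,\hat{\mathcal F},\hat{\mathbb F},\hat\P)$ carrying a Brownian motion $\hat W$, I would set $\hat X_t:=x+\eta\hat W_t$ and construct $\hat R$ pathwise as the two-sided Skorokhod reflection inside the time-varying interval $[\ell_t,u_t]:=[b_1(\hat X_t),b_2(\hat X_t)]$. This interval is non-degenerate by Proposition~\ref{Properties of the boundaries b_1 and b_2}-(iii) and contained in $[\underline r,\bar r]$ by the previous step. Because $b_1,b_2$ are monotone with at most countably many jumps and $\hat X$ is continuous, $\ell,u$ are a.s.\ càdlàg off a Lebesgue-null set of times; standard results for Skorokhod problems with time-dependent càdlàg obstacles, or an approximation by smooth monotone $b_1^n,b_2^n$ followed by a weak-compactness argument that exploits the uniform bound $\hat R_t\in[\underline r,\bar r]$, produce an $\hat{\mathbb F}$-adapted, càdlàg, bounded-variation process $\hat R$ with $\hat R_{0^-}=r$ and minimal decomposition $\hat\xi^+-\hat\xi^-$ such that $(\hat X,\hat R)\in\overline{\mathcal C}$ and \eqref{refl1}--\eqref{refl2} hold.

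Finally, I would reintroduce the drift via Girsanov's theorem. Setting $\nu_t:=\theta(\mu-b\hat R_t-\hat X_t)/\eta$, the uniform bound on $\hat R$ and the Gaussian tails of $\hat X$ on $[0,T]$ give Novikov's condition on every finite horizon, so
$$M^\nu_t:=\exp\!\Big(\int_0^t\nu_s\,\d\hat W_s-\tfrac12\int_0^t\nu_s^2\,\d s\Big)$$
is a true $\hat\P$-martingale. By Kolmogorov consistency, the family $\d\P|_{\hat{\mathcal F}_T}:=M^\nu_T\,\d\hat\P|_{\hat{\mathcal F}_T}$ extends to a probability $\P$ on $\hat{\mathcal F}$ under which $W_t:=\hat W_t-\int_0^t\nu_s\,\d s$ is a Brownian motion. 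Then
$$\hat X_t=x+\eta W_t+\int_0^t\theta(\mu-b\hat R_s-\hat X_s)\,\d s,$$
so $(\hat X,\hat R,\hat\xi,W)$ satisfies \eqref{definition of r}--\eqref{dynamics of pi und r} on $(\hat\Omega,\hat{\mathcal F},\hat{\mathbb F},\P)$ with $\hat\xi=\hat\xi^+-\hat\xi^-$, while the pathwise relations \eqref{refl1}--\eqref{refl2} are unaffected by the equivalent measure change. The resulting tuple is therefore the desired weak solution to Problem~\ref{prob:Sk}. The main difficulty is the rigorous pathwise construction in Step~2, since the boundaries are only monotone and one-sided continuous and hence their compositions with $\hat X$ may have (a null set of) jump times; however, the uniform a~priori bound from Step~1 permits either a direct invocation of known reflection results for càdlàg obstacles or a smoothing-and-limit argument, in the spirit of Section~5 of \cite{ChiaHauss00}.
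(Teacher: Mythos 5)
Your overall strategy---bound $V_x$, deduce that the reflected process $R^\star$ is confined to a compact $r$-interval, construct the reflection pathwise under a reference measure, and reintroduce the drift via Girsanov---is exactly the one the paper uses, and your Step~1 is correct (in fact it establishes the stronger statement $\bar b_2<\infty$ and $\underline b_1>-\infty$, i.e.\ $\overline{\mathcal C}\subseteq\R\times[\underline b_1,\bar b_2]$, via a direct lower/upper bound on the Dynkin-game functional, whereas the paper establishes $\bar b_1<\infty$, $\underline b_2>-\infty$ by appealing to the argument of Proposition~\ref{limit behavior of the boundaries and continuity of the g}-(ii); both suffice to bound $R^\star$). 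Step~2 is stated at roughly the same level of detail as the paper, which simply cites the construction in Section~4.3 of \cite{Federico2014}, so I won't press you on that.

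There is, however, a genuine gap in Step~3. You take the base process to be the driftless Brownian motion $\hat X_t=x+\eta\hat W_t$, so the Girsanov kernel is $\nu_t=\theta(\mu-b\hat R_t-\hat X_t)/\eta$, which contains the unbounded term $\theta\hat W_t$. Your claim that ``the uniform bound on $\hat R$ and the Gaussian tails of $\hat X$ on $[0,T]$ give Novikov's condition on every finite horizon'' is false as stated: $\E\bigl[\exp\bigl(\tfrac{\theta^2}{2}\int_0^T \hat W_s^2\,\d s\bigr)\bigr]=\infty$ once $T\ge \pi/(2\theta)$, so the plain Novikov condition fails for all but small $T$. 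The conclusion that $M^\nu$ is a true martingale on every $[0,T]$ is still correct, but it needs the localized Novikov criterion (partition $[0,T]$ into subintervals of length $<\pi/(2\theta)$ and apply, e.g., Corollary~5.14 in Chapter~3 of \cite{KS}) or some other argument tailored to linear/affine drifts. The paper sidesteps the issue entirely by keeping the Ornstein--Uhlenbeck drift $\theta(\mu-X_t)$ in the reference dynamics and removing only the coupling term $-\theta bR_t$: then the Girsanov kernel is $\tfrac{b\theta}{\eta}R^\star_s$, which is \emph{bounded} thanks precisely to the Step~1 bound on $R^\star$, so Novikov holds trivially and no localization is needed. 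You should either switch to that reference dynamics, or replace the appeal to Novikov by the localized version; as written, the martingale claim is not justified.
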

\begin{proof}
The proof is organized in two steps.
\vspace{0.25cm}

\emph{Step 1.} We here show that $\underline{b}_2 > - \infty$ and $\bar{b}_1 < + \infty$. Using the convexity of $f(\cdot,r)$, \eqref{explicit form of  the controlled X}, and the assumed requirement on $f_x$, one easily finds that
\begin{align*}
\frac{V(x+\varepsilon,r) - V(x,r)}{\varepsilon} \leq \sup_{\xi \in \mathcal{A}}\E\bigg[\int_0^{\infty} e^{-(\rho + \theta)t} f_x(X^{x+\varepsilon,\xi,r}_t) \d t\bigg] \leq \frac{C}{\rho + \theta} =: C', \quad (x,r)\in\R^2.
\end{align*}
Analogously, for any $(x,r)\in\R^2$,
$$\frac{V(x,r) - V(x-\varepsilon,r)}{\varepsilon} \geq \inf_{\xi \in \mathcal{A}}\E\bigg[\int_0^{\infty} e^{-(\rho + \theta)t} f_x(X^{x-\varepsilon,\xi,r}_t) \d t\bigg] \geq -C'.$$
Hence, by the existence of $V_x(\cdot,r)$, we have that $|V_x| \leq C'$.

Since, by assumption, $f_r(x,r)=\beta(r)$, for some strictly increasing function $\beta:\R \to \R$ such that $\lim_{r\to\pm\infty}\beta(r)=\pm \infty$, it follows from arguments similar to those employed to prove (ii) of Proposition \ref{limit behavior of the boundaries and continuity of the g} that 
$$\{(x,r)\in \R^2:\, r \geq b_2(x)\} \subseteq \{(x,r)\in \R^2:\, r \geq \beta^{-1}(\rho K - \theta b C')\}.$$
Hence, $\underline{b}_2 > - \infty$. 

Analogously, one has that 
$$\{(x,r)\in \R^2:\, r \leq b_1(x)\} \subseteq \{(x,r)\in \R^2:\, r \leq \beta^{-1}(\theta b C' - \rho K)\};$$ 
therefore, $\bar{b}_1 < + \infty$. 
\vspace{0.25cm}

\emph{Step 2.} We here follow the approach developed in Section 5 of \cite{ChiaHauss00} in order to construct a weak solution (in the sense of weak solutions to SDEs) to Problem \ref{prob:Sk}. Let $B:=(B_t)_{t\geq0}$ be a standard Brownian motion on the filtered probability space $(\Omega,\mathcal{G},\mathbb{G}:=(\mathcal{G}_t)_{t\geq 0}, \Q)$, where $\mathbb{G}$ satisfies the usual hypotheses. The smallest such filtration is the augmented filtration generated by $B$, that we denote by $\mathbb{F}$.

Following, e.g., the arguments of Section 4.3 in \cite{Federico2014} one can construct a couple of $\mathbb{F}$-progressively measurable (since $\mathbb{F}$-adapted and right-continuous) processes $\xi^{\star}:=(\xi^{\star,+}_t,\xi^{\star,-}_t)_{t\geq0}$ such that
\begin{equation}
\label{dyn-Girs}
\begin{cases}
\d X_t=\theta \big(\mu - X_t\big) \d t+\eta \d B_t, \quad t>0,  \qquad X_0= x \in \mathbb{R}, \\
R^{\star}_t=r + \xi^{\star,+}_t - \xi^{\star,-}_t, \quad t\geq 0, \qquad \qquad \,\,  R^{\star}_{0^-}= r \in \mathbb{R}, 
\end{cases}
\end{equation}
\begin{equation}
\label{refl1-Girs}
(X_t, R^{\star}_t) \in \overline{\mathcal{C}} \quad \text{for all}\,\,t\geq0,\quad \Q-\text{a.s.},
\end{equation}
and
\begin{equation}
\label{refl2-Girs}
\xi^{\star,+}_t = \int_{(0,t]} \mathds{1}_{\{(X_s, R^{\star}_s) \in \mathcal{I}\}} \d \xi^{\star,+}_s, \qquad \xi^{\star,-}_t = \int_{(0,t]} \mathds{1}_{\{(X_s, R^{\star}_s) \in \mathcal{D}\}} \d \xi^{\star,-}_s.
\end{equation}
Since $\underline{b}_2 > - \infty$ and $\bar{b}_1 < + \infty$, there exists finite $\kappa_1,\kappa_2$ (depending on $r$) such that $\kappa_1 \leq R^{\star}_t \leq \kappa_2$ for all $t\geq 0$, $\Q$-a.s.

It thus follows by Girsanov's theorem (Corollary 5.2 in Chapter 3.5 of \cite{KS}) that the process
$$W_t:= B_t + \int_0^t \frac{b\theta}{\eta} R^{\star}_s \d s, \quad t\geq0,$$
is a standard Brownian motion on $(\Omega,\mathcal{F}^B,\mathbb{F}^B:=(\mathcal{F}^B_t)_{t\geq 0}, \P)$, where $\mathbb{F}^B$ is the (uncompleted) filtration generated by $B$, $\mathcal{F}^B:=\mathcal{F}^B_{\infty}$, and $\P$ is a probability measure on $(\Omega,\mathcal{F}^B)$ such that
$$\frac{\d \P}{\d \Q}\Big|_{\mathcal{F}^B_T}=\exp\Big(-\int_0^T \frac{b\theta}{\eta} R^{\star}_s \d B_s  - \frac{1}{2}\int_0^T \frac{b^2\theta^2}{\eta^2} \big(R^{\star}_s\big)^2 \d s\Big), \quad T < \infty.$$
Hence, $\P$-a.s., $(X_t, R^{\star}_t, \xi^{\star}_t)_{t\geq0}$ solves \eqref{definition of r} and \eqref{dynamics of pi und r}, and satisfies \eqref{refl1-Girs} and \eqref{refl2-Girs}; that is, it is a (weak) solution to Problem \ref{prob:Sk}.
\end{proof}

\begin{remark}
\label{rem:bbounded-rate}
Notice that the result of Proposition \ref{prop:bbounded-rate} is particularly relevant in the problem of optimal inflation management discussed in the introduction. Indeed, as a byproduct of Proposition \ref{prop:bbounded-rate} we have that the key interest rate stays bounded under the optimal monetary policy of the central bank.
\end{remark}

In general, the constructive approach of \cite{ChiaHauss00} also gives a strong solution to Problem \ref{prob:Sk} if one can show show that the free boundaries $b_1$ and $b_2$ are globally Lipschitz-continuous, a property that is assumed in \cite{ChiaHauss00}. In fact, in such a case, after constructing pathwise the solution to Problem \ref{prob:Sk} when $b=0$ in the dynamics of $X$ (see, e.g., Section 5 in \cite{ChiaHauss00} or Section 4.3 in \cite{Federico2014} for such a construction), one can still introduce back the linear term $-\theta b R^{\star}$ via a Girsanov's transformation. The Lipschitz property of the free boundaries does indeed guarantee that the exponential process needed for the change of measure is an exponential martingale. Hence, a weak solution to Problem \ref{prob:Sk} exists and a strong solution could then be obtained via a pathwise uniqueness claim whose proof uses, once more, the global Lipschitz-continuity of the free boundaries (see Remark 5.2 in \cite{ChiaHauss00}).

It is worth noticing that in certain obstacle problems in $\R^d$, $d\geq1$, the Lipschitz property is the preliminary regularity needed to upgrade - via a bootstrapping procedure and suitable technical conditions - the regularity of the free boundary to $C^{1,\alpha}$-regularity, for some $\alpha \in (0,1)$, and eventually to $C^{\infty}$-regularity (see \cite{CaffarelliSalsa} and \cite{Petrosyanetal}, among others, for details; see also \cite{DeAStabile} for Lipschitz-regularity results related to optimal stopping boundaries). In multi-dimensional singular stochastic control problems, Lipschitz regularity of the free boundary has been obtained, e.g., in a series of early papers by Soner and Shreve (\cite{SonerShreve1}, \cite{SonerShreve2}, and \cite{SonerShreve3}), via fine PDE techniques, and in the more recent \cite{BudhirajaRoss}, via more probabilistic arguments. In all those works the control process is monotone and the state process is a linearly controlled Brownian motion. Obtaining global Lipschitz-continuity of the free boundaries for the two-dimensional degenerate bounded-variation control problem \eqref{definition of V} is a non trivial task that we leave for future research.


\appendix

\section{Proof of Theorem \ref{thm:Dynkin}}
\label{sec:GameCH}

We want to suitably employ the results of Theorems 3.11 and 3.13 of \cite{ChiaHauss00}. However, in contrast to the fully diffusive setting of \cite{ChiaHauss00}, in our model the process $R$ is purely controlled so that the two-dimensional process $(X,R)$ is degenerate. The idea of the proof is then to perturb the dynamics of $R$ (cf.\ \eqref{definition of r}) by adding a Brownian motion $B:=(B_t)_{t\geq0}$ with volatility coefficient $\delta>0$, so to be able to apply Theorems 3.11 and 3.13 of \cite{ChiaHauss00} for any given and fixed $\delta$. The claims of Theorem \ref{thm:Dynkin} (in particular \eqref{derivative of V with respect to r equals the game}) will then follow by an opportune limit procedure as $\delta \downarrow 0$.

Let $W$ be as in Section \ref{sec:setting}, and suppose that $(\Omega, \mathcal{F}, \F, \P)$ is rich enough to accommodate also a second Brownian motion $B:=(B_t)_{t\geq0}$, independent of $W$. Then, given $(x,r)\in \mathbb{R}^2$, $\delta>0$, and $\xi \in \mathcal{A}$ (cf.\ \eqref{setA}), we denote by $(X^{\xi;\delta},R^{\xi;\delta}):=(X_t^{\xi;\delta},R_t^{\xi;\delta})_{t\geq0}$ the unique strong solution to 
		\begin{equation}
\label{eq:dynamicsdelta}
    \begin{pmatrix}
    \d R_t\\ 
		\d X_t
    \end{pmatrix}
    =\bigg[ 
    \begin{pmatrix}
    0 \\ \theta \mu + \theta b \bar{r}
    \end{pmatrix}
    +
    \begin{pmatrix}
   0 & 0\\-\theta b & -\theta
    \end{pmatrix}
    \begin{pmatrix}
    R_t\\ 
		X_t
    \end{pmatrix}
    \bigg] \d t+ 
    \begin{pmatrix}
    \delta & 0\\ 0 & \eta
    \end{pmatrix}
    \begin{pmatrix}
    \d B_t\\ 
		\d W_t
    \end{pmatrix}
    +
    \begin{pmatrix}
    1\\0
    \end{pmatrix}
    \d \xi_t.
\end{equation}
with initial data $X_{0^-}=x$ and $R_{0^-}=r$. In order to simplify the notation, in the the rest of this proof we will not stress the dependency on $(x,r)$ of the subsequent involved processes. In the case $\xi \equiv 0$, we simply write $(X^{\delta},R^{\delta}):=(X_t^{0;\delta},R_t^{0;\delta})_{t\geq0}$. 

Notice that \eqref{eq:dynamicsdelta} can be easily obtained from equation (2.2) of \cite{ChiaHauss00} by taking $c=1$, by suitably defining the matrices $b$ and $\sigma$ therein, and by setting $x_1=r$ and $x_2=x$. Then we define the perturbed optimal control problem
\begin{equation}
\label{Vdelta}
V^{\delta}(x,r):=\inf_{\xi \in \mathcal{A}}\E\bigg[ \int_0^{\infty} e^{-\rho t} f(X_t^{\xi;\delta}, R_t^{\xi;\delta}) \d t + K \int_0^{\infty} e^{-\rho t}~\d|\xi|_t \bigg].
\end{equation}

By estimates as those leading to Proposition \ref{prop:Vprelim} it can be shown that there exist constants $\tilde{C}_0, \tilde{C}_1, \tilde{C}_2$ (which are independent of $\delta$, for all $\delta$ sufficiently small) such that for any $\lambda \in (0,1)$, any $z:=(x,r) \in \mathbb{R}^2$ and $z':=(x',r')\in \mathbb{R}^2$, we have  
\begin{itemize}
\item[(i)] $0 \leq V^{\delta}(z) \leq \tilde{C}_0\big(1 + |z|\big)^p$,
\item[(ii)] $|V^{\delta}(z) - V^{\delta}(z')| \leq \tilde{C}_1 \big(1 + |z|+|z'|\big)^{p-1} |z-z'|$,
\item[(iii)] $0 \leq \lambda V^{\delta}(z) + (1-\lambda)V^{\delta}(z') - V^{\delta}(\lambda z + (1-\lambda)z') \leq \tilde{C}_2 \lambda(1-\lambda) \big(1 + |z| + |z'|\big)^{(p-2)^+}|z-z'|^2$,
\end{itemize}
where $p>1$ is the same of Assumption \ref{ass:f}. 
Hence $V^{\delta}$ is convex and locally semiconcave, and therefore $V^{\delta}\in W^{2,\infty}_{\text{loc}}(\mathbb{R}^2;\mathbb{R})$. In particular, there exists a version of $V^{\delta}\in C^{1,\text{Lip}}_{\text{loc}}(\mathbb{R}^2;\mathbb{R})$.

Let $(X^{\xi}_t, R^{\xi}_t)_{t\geq0}:=(X^{\xi;0}_t,R^{\xi;0}_t)_{t\geq0}$. By \eqref{definition of r}, \eqref{explicit form of  the controlled X}, and \eqref{eq:dynamicsdelta} one easily finds for $p \in [1,\infty)$
$$\E[|(X^{\xi;\delta}_t, R^{\xi;\delta}_t) - (X^{\xi}_t,R^{\xi}_t)|^p] \leq C_t\delta^{p}, \quad \forall \xi \in \mathcal{A}\,\, \text{and}\,\, t\geq 0,$$
for some $C_t$ that is at most of polynomial growth with respect to $t$. Using now the latter and Assumption \ref{ass:f}-(ii), it can be shown that $V^{\delta}(x,r) \rightarrow V(x,r)$ as $\delta \downarrow 0$ for each $(x,r) \in \mathbb{R}^2$. Let $\mathcal{B}_N:=\{z \in \mathbb{R}^2:\, |z| < N\}$, for some $N>0$. Since items (i)-(iii) above imply that $V^{\delta} \in W^{2,p}(\mathcal{B}_N)$ for any $p>2$ and $W^{2,p}(\mathcal{B}_N)$ is reflexive, there exists a sequence $\delta_n \downarrow 0$ as $n \uparrow \infty$ such that $V^{\delta_n}$ converges weakly in $W^{2,p}(\mathcal{B}_N)$. Because $V^{\delta_n} \rightarrow V$ pointwise and weak limits are unique, we have that $V^{\delta_n} \rightharpoonup V$ weakly in $W^{2,p}(\mathcal{B}_N)$. Since the embedding $W^{2,p}(\mathcal{B}_N) \hookrightarrow C^1(\mathcal{B}_N)$ is compact for $p>2$ (2 being the dimension of our space), it follows that
\begin{equation}
\label{sebseq1}
    V^{\delta_n} \to V \text{ locally uniformly in } \mathbb{R}^2,
\end{equation}
\begin{equation}
\label{sebseq2}
    V_x^{\delta_n} \to V_x \text{ locally uniformly in } \mathbb{R}^2, 
\end{equation}
and
\begin{equation}
\label{sebseq3}
    V_r^{\delta_n} \to V_r \text{ locally uniformly in } \mathbb{R}^2.
\end{equation}

Moreover, by Theorem 3.11 in \cite{ChiaHauss00} (easily adjusted to take care of our general convex function $f$ satisfying Assumption \ref{ass:f}, and upon noticing that $b_{11}=0$ in our setting, cf.\ \eqref{eq:dynamicsdelta}) we have that $V^{\delta}_r$ is the unique (given $V^{\delta}_x$) solution to the pointwise variational inequality:
\begin{equation}
\label{VI-Vdelta}
    \begin{cases}
    V_r^{\delta} \in W_{\text{loc}}^{2,q}(\mathbb{R}^2),~\forall q\geq 2,~ \quad -K \leq V_r^{\delta} \leq K \quad \text{a.e.\ in } \mathbb{R}^2, \\
    (\mathcal{L}^r-\rho)V_r^{\delta} \leq  \theta b V_x^{\delta} - f_r(x,r) \quad \text{a.e.\ in } \mathcal{I}^{\delta}, \\
    (\mathcal{L}^r-\rho)V_r^{\delta} \geq  \theta b V_x^{\delta} - f_r(x,r)\quad \text{a.e.\ in } \mathcal{D}^{\delta}, \\
    (\mathcal{L}^r-\rho)V_r^{\delta} =  \theta b V_x^{\delta} - f_r(x,r)   \quad \text{a.e.\ in } \mathcal{C}^{\delta}, \\
    \end{cases}
\end{equation}
where we have set
\begin{equation*}
\mathcal{I}^{\delta}:=\left\{(x,r)\in \mathbb{R}^2:~~V^{\delta}_r(x,r)=-K \right\}, \quad \mathcal{D}^{\delta}:=\left\{(x,r)\in \mathbb{R}^2:~~V^{\delta}_r(x,r)=K \right\},
\end{equation*}
and
\begin{equation*}
\mathcal{C}^{\delta}:=\left\{(x,r)\in \mathbb{R}^2:~~-K < V^{\delta}_r(x,r)<K \right\}.
\end{equation*}

Define
\begin{equation}
    \tau^{\star;\delta}:= \inf\{t\geq 0: V_r^{\delta}(X_t^{\delta}, R_t^{\delta})\leq -K\}, 
\end{equation}
\begin{equation}
    \sigma^{\star,\delta}:= \inf\{t\geq 0: V_r^{\delta}(X_t^{\delta}, R_t^{\delta})\geq K\},
\end{equation}
\begin{equation}
    \tau^{\star}:= \inf\{t\geq 0: V_r(X_t, r)\leq -K\}, 
\end{equation}
\begin{equation}
    \sigma^{\star}:= \inf\{t\geq 0: V_r(X_t, r)\geq K\}, 
\end{equation}
as well as, for a given $M>0$,
\begin{equation}
 \tau_M^{\delta}:= \inf\{t\geq 0: |X^{\delta}_t| + |R^{\delta}_t| \geq M\}, 
\end{equation}
\begin{equation}
 \tau_M:= \inf\{t\geq 0: |X_t| + |r| \geq M\}.
\end{equation}

Now, by \eqref{VI-Vdelta} we know that for each $\delta>0$ given and fixed, $V_r^{\delta}$ is regular enough to apply a weak version of It\^{o}'s lemma (see, e.g., Theorem 8.5 at p.\ 185 of \cite{BL}) so that for any stopping time $\zeta$ and some fixed $T>0$ one obtains
\begin{align}
\label{Vrdelta}
V_r^{\delta}(x,r) = & \E\bigg[- \int_0^{\tau_M^{\delta} \wedge \tau_M \wedge \zeta \wedge T} e^{-\rho s}(\mathcal{L}^r-\rho)V_r^{\delta}(X_s^{\delta},R_s^{\delta})\, \d s \nonumber \\
& \hspace{0.25cm} + e^{-\rho (\tau_M^{\delta}\wedge \tau_M \wedge \zeta \wedge T)} V_r^{\delta}\Big(X_{\tau_M^{\delta} \wedge \tau_M \wedge \zeta \wedge T}^{\delta}, R_{\tau_M^{\delta}\wedge \tau_M \wedge \zeta \wedge T}^{\delta}\Big)\bigg].
\end{align}

Given an $\F$-stopping time $\tau$, set $\zeta:=\sigma^{\star,\delta}\wedge \sigma^{\star}\wedge \tau$ in \eqref{Vrdelta}, and use that $V^{\delta}$ solves a.e.\ the variational inequality \eqref{VI-Vdelta} to find
\begin{align}
\label{limitunif1}
    V_r^{\delta}(x,r)
    &\geq \E\bigg[ \int_0^{\tau_M^{\delta}\wedge \tau_M \wedge \sigma^{\star,\delta}\wedge \sigma^{\star}\wedge \tau \wedge T} e^{-\rho s}\big( -\theta b V_x^{\delta}(X_s^{\delta}, R_s^{\delta}) + f_r(X_s^{\delta}, R_s^{\delta})\big)\, \d s \nonumber \\ 
		& \hspace{0.25cm} + e^{-\rho (\tau_M^{\delta}\wedge \tau_M \wedge \sigma^{\star,\delta}\wedge \sigma^{\star}\wedge \tau \wedge T)} V_r^{\delta}\big(X_{\tau_M^{\delta}\wedge \tau_M \wedge \sigma^{\star,\delta}\wedge \sigma^{\star}\wedge \tau \wedge T}^{\delta}, R_{\tau_M^{\delta}\wedge \tau_M \wedge \sigma^{\star,\delta}\wedge \sigma^{\star}\wedge \tau \wedge T}^{\delta}\big)\bigg] \\
    & \geq \E\bigg[ \int_0^{\tau_M^{\delta}\wedge \tau_M \wedge \sigma^{\star,\delta}\wedge \sigma^{\star}\wedge \tau \wedge T} e^{-\rho s}(- \theta b V_x^{\delta}(X_s^{\delta}, R_s^{\delta}) + f_r(X_s^{\delta}, R_s^{\delta})\big)\, \d s \nonumber \\ 
    & + \mathds{1}_{\{ \sigma^{\star,\delta} < \tau_M^{\delta}\wedge \tau_M \wedge \sigma^{\star}\wedge \tau \wedge T \}} e^{-\rho \sigma^{\star,\delta}}K - \mathds{1}_{\{ \tau < \tau_M^{\delta}\wedge \tau_M \wedge \sigma^{\star,\delta}\wedge \sigma^{\star}\wedge T\}} e^{-\rho \tau} K \nonumber \\
    & + \mathds{1}_{\{ \tau_M^{\delta}\wedge \tau_M \wedge \sigma^{\star}\wedge  T < \sigma^{\star,\delta}\wedge \tau \}} e^{-\rho (\tau_M^{\delta}\wedge \tau_M \wedge \sigma^{\star}\wedge  T)} V_r^{\delta}\big(X_{\tau_M^{\delta}\wedge \tau_M \wedge \sigma^{\star}\wedge  T}^{\delta}, R^{\delta}_{\tau_M^{\delta}\wedge \tau_M \wedge \sigma^{\star}\wedge  T}\big)\bigg]. \nonumber 
\end{align}

Recalling \eqref{eq:dynamicsdelta}, thanks to the estimates (i)-(iii) above, the uniform convergence of $V_r^{\delta_n}$ to $V_r$ (cf.\ \eqref{sebseq3}), and the fact that there exists $C_T>0$ such that $\E[\sup_{0 \leq s \leq T}|(X^{\delta_n}_t, R^{\delta_n}_t) - (X_t,r)|^q] \leq C_T\delta_n^{q}$, with $X_t:=X^{0;0}_t$ and $1 \leq q < \infty$, it can be shown that (see Theorem 3.7 in Section 3 of Chapter 3 of Chapter \cite{BL} -- in particular p.\ 322 -- and especially Lemma 4.17 in \cite{ChDeA} for a detailed proof in a related but different setting) $\tau_M^{\delta_n}\wedge \tau_M \wedge \sigma^{\star,\delta_n}\wedge \sigma^{\star}\wedge \tau \wedge T \to \tau_M \wedge \sigma^{\star} \wedge \tau \wedge T$ as $n \uparrow \infty$, $\P$-a.s.
Therefore, taking limits in \eqref{limitunif1} with $\delta=\delta_n$ as $n\uparrow \infty$, using the latter convergence of stopping times and \eqref{sebseq1}-\eqref{sebseq2}, one finds
\begin{align*}
    V_r(x,r)
    &\geq \E\bigg[ \int_0^{\sigma^{\star}\wedge \tau_M \wedge \tau \wedge T} e^{-\rho s} \big(-\theta b V_x(X_s,r) - f_r(X_s,r)\big)\, \d s +e^{-\rho \sigma^{\star} }K \mathds{1}_{\{ \sigma^{\star}< \tau_M \wedge \tau \wedge T\}}\\
  & -e^{-\rho \tau }K\mathds{1}_{\{\tau < \sigma^{\star} \wedge \tau_M \wedge T\}} +e^{-\rho (\tau_M \wedge T) }V_r(X_{\tau_M \wedge T},r)\mathds{1}_{\{\tau_M \wedge \sigma^{\star} \wedge T < \sigma^{\star} \wedge \tau\}}\bigg].
\end{align*}
Letting now $M\uparrow \infty$ and $T\uparrow \infty$ and invoking the dominated convergence theorem we obtain
\begin{align}
\label{Vr1}
    V_r(x,r)
    &\geq \E\bigg[ \int_0^{\sigma^{\star} \wedge \tau} e^{-\rho s} \big(-\theta b V_x(X_s,r) - f_r(X_s,r)\big)~\d s
    +e^{-\rho \sigma^{\star} }K \mathds{1}_{\{ \sigma^{\star}< \tau\}} -e^{-\rho \tau }K\mathds{1}_{\{\tau < \sigma^{\star} \}}\bigg],
\end{align}
for any $\mathbb{F}$-stopping time $\tau$.

Analogously, picking $\zeta=\tau^{\star,\delta_n}\wedge \tau^{\star}\wedge\sigma$, for any $\F$-stopping time $\sigma$, in \eqref{Vrdelta}, and taking limits as $n \uparrow \infty$, and then as $M\uparrow \infty$ and $T\uparrow \infty$, yield
\begin{align}
\label{Vr2}
    V_r(x,r) &\leq \E\bigg[ \int_0^{\sigma \wedge \tau^{\star}} e^{-\rho s} \big(-\theta b V_x(X_s,r) - f_r(X_s,r)\big)\, \d s
    +e^{-\rho \sigma }K \mathds{1}_{\{ \sigma< \tau^{\star}\}} -e^{-\rho \tau^{\star} }K\mathds{1}_{\{\tau^{\star} < \sigma \}}\bigg].
\end{align}

Finally, the choice $\zeta=\tau^{\star,\delta_n}\wedge \tau^{\star} \wedge \sigma^{\star,\delta_n}\wedge \sigma^{\star}$ leads (after taking limits) to
\begin{align}
\label{Vr3}
    V_r(x,r)
    &= \E\bigg[ \int_0^{\sigma^{\star} \wedge \tau^{\star}} e^{-\rho s} \big(-\theta b V_x(X_s,r) - f_r(X_s,r)\big)\, \d s
    +e^{-\rho \sigma^{\star} }K \mathds{1}_{\{ \sigma^{\star}< \tau^{\star}\}} -e^{-\rho \tau^{\star}}K\mathds{1}_{\{\tau^{\star} < \sigma^{\star} \}}\bigg].
\end{align}

Combining \eqref{Vr1}, \eqref{Vr2}, and \eqref{Vr3} completes the proof.



\medskip

\indent \textbf{Acknowledgments.} Financial support by the German Research Foundation (DFG) through the Collaborative Research Centre 1283 is gratefully acknowledged by the authors. The authors also thank Peter Bank, Dirk Becherer, Cristina Caroli Costantini, Peter Frentrup, and Mihail Zervos for interesting discussions.

We are also indebted to three anonymous referees for their pertinent and useful comments and suggestions.



\begin{thebibliography}{199}

\bibitem{AlMotairiZ} \textsc{Al Motairi, H., Zervos, M.}\ (2017). Irreversible Capital Accumulation with Economic Impact. \emph{Appl.\  Math.\ Optim.}\ \textbf{75} 525--551.

\bibitem{Alvarez08} \textsc{Alvarez, L.H.R.}~(2008). A Class of Solvable Stopping Games. \emph{Appl.~Math.~Optim.}~\textbf{58} 291--314.

\bibitem{bateman}\textsc{Bateman, H.}\ (1981). \emph{Higher Transcendental Functions, Volume II}. McGraw-Hill Book Company.

\bibitem{BL} \textsc{Bensoussan, A., Lions, J.L.}\ (1982). \emph{Applications of Variational Inequalities in Stochastic Control}. NorthHolland, Amsterdam.




\bibitem{BlanchardF} \textsc{Blanchard, O.J., Fischer, S.}\ (1989). \emph{Lectures on Macroeconomics}. The MIT Press.

\bibitem{BorodinSalminen} \textsc{Borodin, A.N., Salminen, P.}\ (2014). \emph{Handbook of Brownian Motion- Facts and Formulae}. 2nd Corrected Edition. Springer.

\bibitem{BudhirajaRoss} \textsc{Budhiraja, A., Ross, K.}\ (2008). Optimal Stopping and Free Boundary Characterizations for Some Brownian Control Problems. \emph{Ann.\ Appl.\ Probab.}\ \textbf{18(6)} 2367--2391.

\bibitem{CCKS} \textsc{Caffarelli, L.A., Crandall, M.G., Kocan, M., \v{S}wiech, A.}\ (1996). On Viscosity Solutions of Fully Nonlinear Equations with Measurable Ingredients. \emph{Comm.\ Pure Appl.\ Math.}\ \textbf{49(4)} 365--398.

\bibitem{CaffarelliSalsa} \textsc{Caffarelli, L.A., Salsa, S.}\ (2005). \emph{A Geometric Approach to Free Boundary Problems}. Graduate Studies in Mathematics, Vol.\ \textbf{68}. American Mathematical Society, Providence (RI).

\bibitem{CS} \textsc{Cannarsa, P., Sinestrari, C.}\ (2014). \emph{Semiconcave Functions, Hamilton–Jacobi Equations, and Optimal Control}. Progress in Nonlinear Differential Equationsand Their Applications, Volume 58. Birkh\"auser.

\bibitem{ThesisChiarolla} \textsc{Chiarolla, M.B., Haussmann, U.G.}\ $ (1992)$. Geometric Approach to Monotone Stochastic Control. Ph.D.\ Thesis, The University of British Columbia (available online).

\bibitem{ChiaHauss98} \textsc{Chiarolla, M.B., Haussmann, U.G.}\ $ (1998)$. Optimal Control of Inflation: a Central Bank Problem. \emph{SIAM J.\ Control Optim.}\ \textbf{36(3)} 1099--1132.

\bibitem{ChiaHauss00} \textsc{Chiarolla, M.B., Haussmann, U.G.}\ $ (2000)$. Controlling Inflation: the Infinite Horizon Case. \emph{Appl.\ Math.\ Optim.}\ \textbf{41} 25--50.

\bibitem{ChDeA} \textsc{Chiarolla, M.B., De Angelis, T.}\ $ (2016)$. Optimal Stopping of a Hilbert Space Valued Diffusion: An Infinite Dimensional Variational Inequality. \emph{Appl.\ Math.\ Optim.}\ \textbf{73} 271--312.

\bibitem{CMR} \textsc{Chow, P.-L., Menaldi, J.-L., Robin, M.}\ $ (1985)$. Additive Control of Stochastic Linear Systems with Finite Horizon. \emph{SIAM J.\ Control Optim.}\ \textbf{23(6)} 858--899.

\bibitem{DeAFeMo15}\textsc{De Angelis, T., Ferrari, G., Moriarty, J.}\ (2015). A Non Convex Singular Stochastic Control Problem and its Related Optimal Stopping Boundaries. \emph{SIAM J.\ Control Optim.}\ \textbf{53(3)} 1199--1223.

\bibitem{DeAFeMo19}\textsc{De Angelis, T., Ferrari, G., Moriarty, J.}\ (2019). A Solvable Two-Dimensional Degenerate Singular Stochastic Control Problem with Non Convex Costs. \emph{Math.\ Oper.\ Res.}\ \textbf{44(2)} 512--531.

\bibitem{DeAStabile}\textsc{De Angelis, T., Stabile, G.}\ (2019). On Lipschitz Continuous Optimal Stopping Boundaries. \emph{SIAM J.\ Control Optim.}\ \textbf{57(1)} 402--436.

\bibitem{DupuisIshii} \textsc{Dupuis, P., Ishii, H.}\ $ (1993)$. SDEs with Oblique Reflection on Nonsmooth Domains. \emph{Ann.\ Probab.}\ \textbf{21(1)} 554--580.

\bibitem{Federico2014} \textsc{Federico, S., Pham, H.}\ $(2014)$. Characterization of the Optimal Boundaries in Reversible Investment Problems. \emph{SIAM J.\ Control Optim.}\ \textbf{52(4)} 2180--2223.

\bibitem{FlemingSoner2006} \textsc{Fleming, W.H., Soner, H.M.}\ (2005). \emph{Controlled Markov Processes and Viscosity Solutions}. 2nd Edition. Springer.


\bibitem{JackZervos} \textsc{Jack, A., Zervos, M.}\ (2006). Impulse Control of One-dimensional It\^o Diffusions with an Expected and a Pathwise Ergodic Criterion. \emph{Appl.\ Math.\ Optim.}\ \textbf{54}, 71--93.

\bibitem{JYC}\textsc{Jeanblanc, M., Yor, M., Chesney, M.}\ (2006). \emph{Mathematical Methods for Financial Markets}. Springer.

\bibitem{K83} \textsc{Karatzas, I.}~(1983). A Class of Singular Stochastic Control Problems. \emph{Adv.~Appl.~Prob.}\ \textbf{15} 225--254.

\bibitem{KS}\textsc{Karatzas, I., Shreve, S.E.}\ (1991). \emph{Brownian Motion and Stochastic Calculus}. Second edition. Springer.

\bibitem{KW} \textsc{Karatzas, I., Wang, H.}~(2005). Connections between bounded-variation control and Dynkin games in \emph{Optimal Control and Partial Differential Equations; Volume in Honor of Professor Alain Bensoussan's 60th Birthday} (J.L.\ Menaldi, A.\ Sulem and E.\ Rofman, eds.)\ 353--362. IOS Press, Amsterdam.

\bibitem{LambertonZervos} \textsc{Lamberton, D., Zervos, M.}\ (2013). On the Optimal Stopping of a One-Dimensional Diffusion. \emph{Electron.\ J.\ Probab.}\ \textbf{18} 1--49. 

\bibitem{LS} \textsc{Lions, P.L., Sznitman}\ (1984). Stochastic Differential Equations with Reflecting Boundary Conditions. \emph{Commun.\ Pur.\ Appl.\ Math.}\ \textbf{XXXVII} 511--537.

\bibitem{LokkaZervos} \textsc{L\o kka, A., Zervos, M.}\ (2011). Long-term Optimal Investment Strategies in the Presence of Adjustment Costs. \emph{SIAM J.~Control Optim.}\ \textbf{51} 996--1034.

\bibitem{LonZervos} \textsc{Lon, P.C., Zervos, M.}\ (2011). A Model for Optimally Advertising and Launching a Product. \emph{Math.~Oper.\ Res.}\ \textbf{36} 363--376.

\bibitem{MehriZervos} \textsc{Merhi, A., Zervos, M.}\ (2007). A Model for Reversible Investment Capacity Expansion. \emph{SIAM J.~Control Optim.}\ \textbf{46(3)} 839--876.

\bibitem{Peskir2003} \textsc{Peskir, G.}\ (2005). A Change-of-Variable Formula with Local Time on Curves. \emph{J.~Theoret.~Probab.}\ \textbf{18(3)} 499--535.

\bibitem{Peskir2008} \textsc{Peskir, G.}\ (2008). Optimal Stopping Games and Nash Equilibrium. \emph{Theory Probab.\ Appl.}\ \textbf{53} 558--571.

\bibitem{Petrosyanetal} \textsc{Petrosyan, A., Shahgholian, H., Uraltseva, N.}\ (2012). \emph{Regularity of Free Boundaries in Obstacle-type Problems}. Graduate Studies in Mathematics, Vol.\ \textbf{136}. American Mathematical Society, Providence (RI).

\bibitem{villeneuve} \textsc{Pierre, E., Villeneuve, S., Warin, X.}\ (2016). Liquidity Management with Decreasing-returns-to-scale and Secured Credit Line. \emph{Financ.\ Stoch.}\ \textbf{20(4)} 809--854.

\bibitem{Salminen85} \textsc{Salminen, P.}\ (1985). Optimal Stopping of One-dimensional Diffusions. \emph{Math.\ Nachr.}\ \textbf{124} 85--101.

\bibitem{SonerShreve1} \textsc{Soner, H.M., Shreve S.E.}\ (1989). Regularity of the Value Function for a Two-Dimensional Singular Stochastic Control Problem. \emph{SIAM J.\ Control Optim.}\ \textbf{27(4)} 876--907.

\bibitem{SonerShreve2} \textsc{Soner, H.M., Shreve S.E.}\ (1991). A Free Boundary Problem Related to Singular Stochastic Control. \emph{Applied Stochastic Analysis (London, 1989)} 265--301.

\bibitem{SonerShreve3} \textsc{Soner, H.M., Shreve S.E.}\ (1991).  A Free Boundary Problem Related to Singular Stochastic Control: the Parabolic Case. \emph{Commun.\ Part.\ Diff.\ Eq.}\ \textbf{16(2--3)} 373--424.


\bibitem{Taksar85} \textsc{Taksar, M.I.}~(1985). Average Optimal Singular Control and a Related Stopping Problem. \emph{Math.~Oper.~Res.}~\textbf{10(1)} 63--81.

\bibitem{Woodford} \textsc{Woodford, M.}\ (2004). Inflation Targeting and Optimal Monetary Policy. \emph{Federal Reserve Bank of St.\ Louis Review} \textbf{86(4)} 15--41.


\bibitem{YongZhou1999} \textsc{Yong, J., Zhou, X.Y.}\ (1999). \emph{Stochastic Control - Hamiltonian Systems and HJB Equations}. Springer.


\end{thebibliography}
\end{document}